\newcommand{\T}{\mathbb{T}}
\newcommand{\D}{\mathbb{D}}
\newcommand{\C}{\mathbb{C}}
\newcommand{\R}{\mathbb{R}}
\newcommand{\Z}{\mathbb{Z}}
\newcommand{\B}{\mathcal{B}}
\renewcommand{\Re}{\operatorname{Re}}
\renewcommand{\Im}{\operatorname{Im}}
\newcommand{\dist}{\operatorname{dist}}
\theoremstyle{plain}
\newtheorem{theorem}{Theorem}
\newtheorem{lemma}[theorem]{Lemma}
\newtheorem{corollary}[theorem]{Corollary}
\newtheorem{theoremvc}{Theorem}
\newtheorem{theoremvce}{Theorem}
\newtheorem{theoremn}{Theorem}
\newtheorem*{theoremA}{Theorem A}
\theoremstyle{remark}
\newtheorem{example}{Example}
\newtheorem*{remarks}{Remarks}
\theoremstyle{definition}
\newtheorem*{definition}{Definition}
\begin{document}

\title[Resolvent criteria for similarity to a normal
operator]{Resolvent criteria for similarity to a normal operator with
  spectrum on a curve}

% OLD:

% \title{Linear resolvent growth for operators with spectra on
% a curve and their similarity to normals}

\author{Michael A. Dritschel}
\address{
  School of Mathematics, Statistics and Physics\\
  Herschel Building\\
  University of Newcastle\\
  Newcastle upon Tyne\\
  NE1 7RU\\
  UK}
\email{michael.dritschel@ncl.ac.uk}

\author{Daniel Est\'evez}
\address{GMV Innovating Solutions, Tres Cantos 28760 (Madrid), Spain}
\email{daniel@destevez.net}

\author{Dmitry Yakubovich}
\address{Departamento de Matem\'{a}ticas\\
  Universidad Aut\'onoma de Madrid\\ Cantoblanco 28049 (Madrid)\\ Spain\\
  and Instituto de Ciencias Matem\'{a}ticas (CSIC - UAM - UC3M - UCM)}
\email{dmitry.yakubovich@uam.es}

\thanks{The second author was supported by a grant from the
  Mathematics Department of the Universidad Aut\'onoma de Madrid and
  the Project MTM2015-66157-C2-1-P of the Ministry of Economy and
  Competitiveness of Spain.  This work forms part of his thesis,
  defended in 2017.  The third author was supported by the Project
  MTM2015-66157-C2-1-P and by the ICMAT Severo Ochoa project
  SEV-2015-0554 of the Ministry of Economy and Competitiveness of
  Spain and the European Regional Development Fund (FEDER)}

\subjclass[2010]{Primary 47A10; Secondary 47B15, 47A60.}
% 47A10 = Spectrum, resolvent
% 47B15 = Hermitian and normal operators
% 47A60 = Functional calculus
%\keywords{}

\date\today

\begin{abstract}
  We give some new criteria for a Hilbert space operator with spectrum
  on a smooth curve to be similar to a normal operator, in terms of
  pointwise and integral estimates of the resolvent.  These results
  generalize criteria of Stampfli, Van~Casteren and Naboko, and
  answers several questions posed by Stampfli in~\cite{Stampfli3}.
  The main tools are from our recent results~\cite{article2} on
  dilation to the boundary of the spectrum, along with the Dynkin
  functional calculus for smooth functions, which is based on
  pseudoanalytic continuation.
\end{abstract}

\maketitle

\section{Introduction}

As Stampfli proved in 1969 (see~\cite{Stampfli}), if $\Gamma \subset
\C$ is a smooth curve, $T$ is a bounded operator on a Hilbert space
$H$ with spectrum $\sigma(T)$ contained in $\Gamma$, and there is a
neighborhood $U$ of $\Gamma$ such that $\|(T-\lambda)^{-1}\|\leq
\dist(\lambda,\Gamma)^{-1}$ for all $\lambda \in U\setminus \Gamma$,
then $T$ is normal.  Theorems of this type were first proved by
Nieminen~\cite{Nieminen} for the case $\Gamma = \R$ and by
Donoghue~\cite{Donoghue} for the case when $\Gamma$ is a circle.

If $\Gamma$ is not smooth, such a result need no longer be true.  A
counterexample can be found in~\cite{StampfliHyp}.  Even if $\Gamma$
is a circle, the condition $\|(T-\lambda)^{-1}\| \leq
C\dist(\lambda,\Gamma)^{-1}$, $\lambda \in\C\setminus\Gamma$, where
$C$ is a constant greater than $1$, is not sufficient for $T$ to be
\emph{similar} to a normal operator; that is, for some invertible $S$
and normal operator $N$, to have $T = SNS^{-1}$.  See the paper
Markus~\cite{Markus}.  Benamara and
Nikolski~\cite{BenamaraNikolski}*{Section 3.2} have a general result
in this direction, and in a related article~\cite{NikolskiTreil},
Nikolski and Treil give a counterexample where $T$ is a rank one
perturbation of a unitary operator with $\sigma(T) \subset \T$.

Nevertheless, the hypothesis in Stampfli's theorem can be successfully
weakened.  Denote by $\B(H)$ the set of bounded linear operators on a
Hilbert space $H$.  We will prove the following:

\begin{theorem}
  \label{two-sided}
  Let $\Gamma \subset \C$ be a $C^{1+\alpha}$ Jordan curve, and
  $\Omega$ the domain it bounds.  Let $T\in \B(H)$ be an operator with
  $\sigma(T)\subset \Gamma$.  Assume that
  \begin{equation*}
    \|(T-\lambda)^{-1}\|\leq \frac{1}{\dist(\lambda,\Gamma)},\qquad
    \lambda\in U\setminus\overline{\Omega},
  \end{equation*}
  for some open set $U$ containing $\partial\Omega$, and
  \begin{equation*}
    \|(T-\lambda)^{-1}\|\leq \frac{C}{\dist(\lambda,\Gamma)},\qquad
    \lambda\in\Omega,
  \end{equation*}
  for some constant $C$.  Then $T$ is similar to a normal operator.
\end{theorem}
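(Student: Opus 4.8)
The plan is to equip $T$ with a bounded unital homomorphic functional calculus $\Phi\colon C(\Gamma)\to\B(H)$ extending the Riesz functional calculus of $T$ (in particular with $\Phi(\zeta)=T$ for the coordinate function $\zeta$), and then to invoke a standard consequence: such a $\Phi$ extends to the bounded Borel functions on $\Gamma$ and produces a uniformly bounded spectral measure $E\mapsto\Phi(\chi_E)$, so that $T$ is a spectral operator of scalar type in the sense of Dunford, which on a Hilbert space is equivalent to being similar to a normal operator. (Equivalently, a bounded unital homomorphism of the commutative C*-algebra $C(\Gamma)$ is similar to a $*$-representation, under which $\zeta$ goes to a normal operator.) So the whole task is to construct $\Phi$ and, above all, to estimate $\|\Phi(f)\|$ by $\|f\|_{\infty,\Gamma}$.

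The building block is the Dynkin (pseudoanalytic) functional calculus. On the dense class $C^\infty(\Gamma)$ I would fix a tubular neighborhood $V$ of $\Gamma$, pick for each $f\in C^\infty(\Gamma)$ an almost-analytic extension $\tilde f$ to $V$ with $|\bar\partial\tilde f(\lambda)|\le C_{f,m}\,\dist(\lambda,\Gamma)^m$ for $m$ as large as desired, and set
\[
  \Phi(f)=f(T)=-\frac1\pi\int_{V\setminus\Gamma}\bar\partial\tilde f(\lambda)\,(T-\lambda)^{-1}\,dA(\lambda).
\]
Splitting this integral over $V\cap\Omega$ and $V\setminus\overline{\Omega}$ and inserting the two resolvent bounds shows it converges absolutely, is independent of $\tilde f$, and defines a unital homomorphism with $\|f(T)\|\le C\,\|f\|_{C^{k}(\Gamma)}$ for some fixed $k=k(\alpha)$; the same computation carried out entrywise for matrix-valued symbols gives the completely bounded version with the same $k$. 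By itself this is only a smooth functional calculus and is far too weak to conclude.

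The heart of the matter, and the step I expect to be the real obstacle, is to upgrade the $C^{k}(\Gamma)$ estimate to the uniform estimate $\|f(T)\|\le K\,\|f\|_{\infty,\Gamma}$. The obstruction is classical: one cannot just split a symbol on $\Gamma$ into its parts holomorphic in $\Omega$ and holomorphic off $\overline{\Omega}$ and bound these by the two resolvent estimates separately, since the corresponding ``Riesz projection'' is unbounded in the sup norm --- this is why $C^{k}$-calculi exist for operators nowhere near being similar to a normal one. Overcoming it must use the \emph{asymmetry} of the hypotheses. The sharp outside estimate $\|(T-\lambda)^{-1}\|\le\dist(\lambda,\Gamma)^{-1}$ is exactly the input of the boundary-dilation theorem of~\cite{article2}: it yields a normal operator $N$ with $\sigma(N)\subseteq\Gamma$, on a space $K\supseteq H$, into which $T$ dilates --- after a harmless similarity, as a compression to a semi-invariant subspace, compatibly with the rational calculus with poles off $\overline{\Omega}$ --- and the plan is to play this dilation against the Dynkin representation of $f(T)$ so that the part of the integral over $V\setminus\overline{\Omega}$ is absorbed into $\|f(N)\|=\|f\|_{\infty,\Gamma}$, with no loss of derivatives. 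The cheap inside estimate is then used, via a companion non-sharp application of~\cite{article2} obtained after a M\"obius change of variables swapping the two sides of $\Gamma$, both to control the complementary part of the integral by $C'\|f\|_{\infty,\Gamma}$ and to show that the defect of the dilation is uniformly transient, so that $T$ is in fact similar to a genuine invariant restriction $N|_{M}$. Two routes then close the argument: either the assembled bounds extend $\Phi$ to all of $C(\Gamma)$ --- legitimate because $\Gamma$ has planar measure zero, so rational functions with poles off $\Gamma$ are dense in $C(\Gamma)$ by Hartogs--Rosenthal --- and the reduction of the first paragraph applies; or one notes directly that $\sigma(N|_{M})=\sigma(T)\subseteq\Gamma$ has empty interior, whence $r\mapsto r(N|_{M})$ is contractive for the sup norm on $\sigma(T)$, extends to a contractive, hence $*$-preserving, representation of $C(\sigma(T))$, and forces $N|_{M}$, and therefore $T$ up to similarity, to be normal. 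Throughout, the $C^{1+\alpha}$ regularity is what legitimizes both the Dynkin extensions (through the distortion estimates for $\dist(\cdot,\Gamma)$) and the two applications of~\cite{article2}; if preferred, one may first transport everything to $\T$ via a conformal map $\Omega\to\D$, which is $C^{1+\alpha}$ up to the boundary by Kellogg--Warschawski, the resolvent hypotheses surviving $T\mapsto\varphi(T)$ up to constants.
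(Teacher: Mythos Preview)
Your opening move is right and matches the paper: the sharp outside estimate feeds into Theorem~A of~\cite{article2} and makes $\overline{\Omega}$ a complete $K$-spectral set for $T$, hence gives a normal dilation. The gap is in the second half, where you must pass from ``$\overline{\Omega}$ is $K$-spectral and the inside resolvent grows linearly'' to ``$T$ is similar to normal,'' and neither of your mechanisms does this. The phrase ``non-sharp application of~\cite{article2}'' has no content: Theorem~A needs the constant-$1$ estimate at the probe points, and the inside bound with constant $C>1$ yields nothing comparable after a M\"obius swap --- linear resolvent growth alone does not force $K$-spectrality, as the counterexamples of Markus and of Nikolski--Treil cited in the introduction already show. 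Your claim that ``the defect of the dilation is uniformly transient'' is precisely the conclusion to be proved; the $K$-spectral dilation only realizes $T$ (after similarity) as a compression to a \emph{semi}-invariant subspace, and you offer no argument that the lower invariant piece vanishes. Your endgame (subnormal with thin spectrum $\Rightarrow$ normal, via Hartogs--Rosenthal) would be fine once $T$ is similar to $N|_M$ with $M$ invariant, but you never reach that point.

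The paper closes the gap by a different route (Lemma~\ref{lemma-two-sided}): transport to $\D$ via the Riemann map $\eta$ and the Dynkin calculus (Theorem~\ref{thm-main}), observe that $K$-spectrality of $\overline{\Omega}$ makes $\eta(T)$ power bounded, and then invoke Van~Casteren's Theorem~\ref{vc80} --- power boundedness together with the transported inside resolvent bound on $\D$ force $\eta(T)$ to be similar to a unitary --- after which Lemma~\ref{similar-unitary} pulls this back to $T$. The substantive ingredient you are missing is exactly Van~Casteren's theorem or an equivalent; without it, the inside estimate is too weak either to produce a bounded $C(\Gamma)$-calculus or to collapse the dilation.
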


In other words, we assume that a resolvent estimate with constant $1$
is satisfied outside $\Omega$ and an estimate with constant $C$ is
satisfied inside $\Omega$ (then $C\ge 1$).  The same conclusion holds
if, vice versa, these estimates hold with with constant $1$ inside
$\Omega$ and with constant $C$ outside $\Omega$; see the Remark at the
end of Section~\ref{proof-thm-1}.  The result gives a positive answer
to Question~2 posed by Stampfli in~\cite{Stampfli3}, which he observed
as being the case when $\Gamma$ is a circle.

The proof of Theorem~\ref{two-sided} will use a generalization of a
theorem of Putinar and Sandberg on complete $K$-spectral sets that was
proved in~\cite{article2}.  In fact, this theorem is an easy corollary
of this generalization and Lemma~\ref{lemma-two-sided}, which is
stated below.  The connection of spectral sets and similarity problems
was already observed by Stampfli in~\cite{Stampfli3}.  In Theorem~8 of
that paper he proved via different techniques a version of our
Lemma~\ref{lemma-two-sided} under the assumption that
$\overline{\Omega}$ set is a spectral set for $T$ rather than a
$K$-spectral set, along with stronger smoothness conditions for the
boundary of $\Omega$.

Many different kinds of conditions implying normality of an operator
have been studied.  See, for instance,~\cite{Berberian3} and the
previous articles in this series.  In~\cite{CampbellGellar2}, Campbell
and Gellar studied operators $T$ for which $T^*T$ and $T+T^*$ commute,
showing, for instance, that if $\sigma(T)$ is a subset of a vertical
line or $\R$, then $T$ is normal.  In~\cite{Djordjevic} Djordjevi\'{c}
gave several conditions for an operator to be normal using the
Moore-Penrose inverse.  Gheondea considered operators which are the
product of two normal operators in~\cite{Gheondea}.  See
also~\cite{Moslehian} and references therein.

Here we exhibit conditions for an operator to be similar to a normal
operator in terms of estimates of (or more properly, bounds on) its
resolvent.  Others have done likewise.  In~\cite{VanCasteren80},
Van~Casteren proved the following.

\begin{theoremvce}
  \label{vc80}
  Let $T\in\B(H)$ be an operator with $\sigma(T)\subset\T$.  Assume
  that $T$ satisfies the resolvent estimate
  \begin{equation*}
    \|(T-\lambda)^{-1}\|\leq C(1-|\lambda|)^{-1},\qquad |\lambda|<1
  \end{equation*}
  and
  \begin{equation*}
    \|T^n\| \leq C,\qquad n\geq 0.
  \end{equation*}
  Then $T$ is similar to a unitary operator.
\end{theoremvce}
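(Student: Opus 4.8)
The plan is to show that $\T$ is a complete $K$-spectral set for $T$ and then, via the generalization of the Putinar--Sandberg theorem established in \cite{article2}, to pass to a similar operator for which $\T$ is an honest (complete) spectral set; on the circle this last condition forces unitarity.

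The first, routine, point is to convert power boundedness into an exterior resolvent bound. Since $\|T^n\|\le C$ for $n\ge 0$ and $\sigma(T)\subset\T$, the Neumann series $(\lambda-T)^{-1}=\sum_{n\ge 0}\lambda^{-n-1}T^n$ converges for $|\lambda|>1$ and gives $\|(T-\lambda)^{-1}\|\le C(|\lambda|-1)^{-1}$ there. Together with the hypothesis this yields a two-sided estimate of order one,
\[
  \|(T-\lambda)^{-1}\|\le \frac{C'}{\dist(\lambda,\T)},\qquad \lambda\in\C\setminus\T,
\]
with $C'=\max(C,1)$. This is the exact analogue of the hypothesis of Theorem~\ref{two-sided} except that no side carries the constant $1$; as will be clear below, on the circle that is enough, because we shall only need a $K$-spectral (not a spectral) set.

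The heart of the matter is to build from this resolvent growth, together with the smoothness of $\T$, a Dynkin-type functional calculus that is completely bounded. Given a (matrix-valued) rational function $f$ with poles off $\T$, one takes a pseudoanalytic extension $\tilde f$ of $f|_{\T}$ to a fixed thin two-sided neighbourhood $N$ of $\T$, compactly supported in $N$ and chosen so that
\[
  \bigl|\bar\partial\tilde f(\zeta)\bigr|\le K_0\,\|f\|_{\infty,\T}\,\dist(\zeta,\T)^{\beta}
\]
for some fixed $\beta>0$ (possible since $f|_\T$ is real-analytic; in the matrix-valued case one uses a fixed extension operator, so that the same bound holds for the norm of $\bigl[\bar\partial\tilde f_{ij}(\zeta)\bigr]$). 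Setting $\Phi(f)=-\tfrac1\pi\iint_{N}\bar\partial\tilde f(\zeta)\,(\zeta-T)^{-1}\,dm(\zeta)$, one checks --- by an argument of the same type as for Lemma~\ref{lemma-two-sided} --- that $\Phi(f)$ is independent of the chosen extension, that $\Phi$ is multiplicative and agrees with the rational functional calculus of $T$, and that
\[
  \|\Phi(f)\|\le \frac1\pi\iint_{N}\bigl|\bar\partial\tilde f(\zeta)\bigr|\,\|(\zeta-T)^{-1}\|\,dm(\zeta)\le \frac{K_0 C'}{\pi}\Bigl(\iint_{N}\dist(\zeta,\T)^{\beta-1}\,dm(\zeta)\Bigr)\|f\|_{\infty,\T}=K\|f\|_{\infty,\T},
\]
the last integral being finite because $\beta>0$ and $N$ is bounded. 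The same bound holds for every matrix size, so $\T$ is a complete $K$-spectral set for $T$. I expect this step --- choosing the pseudoanalytic extension correctly and verifying that the $\bar\partial$-integral is genuinely multiplicative and reproduces the rational calculus, uniformly in the matrix dimension --- to be the main obstacle; it is precisely where both hypotheses enter, through the two-sided order-one resolvent estimate, and it uses the full force of the Dynkin calculus for smooth functions.

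Finally, by the cited generalization of Putinar--Sandberg, since $\T$ is a complete $K$-spectral set for $T$ there is an invertible $S\in\B(H)$ such that $\T$ is a complete spectral set for $T'=STS^{-1}$; in particular $\|g(T')\|\le\|g\|_{\infty,\T}$ for every scalar rational $g$ with poles off $\T$. Applying this to $g(z)=z$ gives $\|T'\|\le 1$, and to $g(z)=1/z$, whose only pole is $0\notin\T$ and for which $\|g\|_{\infty,\T}=1$, gives $\|(T')^{-1}\|\le 1$ (here $T'$ is invertible since $\sigma(T')=\sigma(T)\subset\T$). Hence $\|x\|=\|(T')^{-1}T'x\|\le\|T'x\|\le\|x\|$ for all $x\in H$, so $T'$ is an invertible isometry, i.e.\ a unitary operator, and $T=S^{-1}T'S$ is similar to it.
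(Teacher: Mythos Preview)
First, note that the paper does not itself prove Theorem~VC1: it is quoted from Van~Casteren~\cite{VanCasteren80} and used as a black box (in the proof of Lemma~\ref{lemma-two-sided}).  So there is no ``paper's own proof'' to compare against, and your attempt must be judged on its own.

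There is a genuine gap in your central step.  You claim that every rational $f$ with poles off $\T$ admits a compactly supported extension $\tilde f$ with
\[
  \bigl|\bar\partial\tilde f(\zeta)\bigr|\le K_0\,\|f\|_{\infty,\T}\,\dist(\zeta,\T)^{\beta},
\]
with $K_0$ and $\beta$ \emph{independent of $f$}.  This is false.  Dynkin's pseudoanalytic extension (see~\eqref{eq:pseudoanalytic-estimate}) controls $\bar\partial\tilde f$ by the $C^{1+\alpha}$ norm of $f|_\T$, not by its sup norm, and the two are not comparable on the rationals: a sequence of rationals can converge in $C(\T)$ to a function that is continuous but not H\"older, and under your bound the limit would inherit a pseudoanalytic extension of positive order, which is impossible.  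The remark ``possible since $f|_\T$ is real-analytic'' misses that the constant must be uniform in $f$; real-analyticity of each individual $f$ is irrelevant.

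A sanity check confirms that this step cannot be repaired as written.  After the first paragraph you use the power-boundedness hypothesis \emph{only} to produce the exterior resolvent estimate; from then on your argument depends solely on the two-sided bound $\|(T-\lambda)^{-1}\|\le C'\dist(\lambda,\T)^{-1}$.  Were the Dynkin step valid, you would therefore have shown that two-sided linear resolvent growth alone makes $\T$ a complete $K$-spectral set and hence forces similarity to a unitary.  But the introduction of the paper recalls exactly that this implication fails: see Markus~\cite{Markus} and Nikolski--Treil~\cite{NikolskiTreil}, who exhibit an operator with $\sigma(T)\subset\T$ and linear resolvent growth on both sides that is not similar to any normal operator.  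Your route thus proves strictly more than is true.

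A minor misattribution, harmless for the logic: the passage from ``$\T$ is a complete $K$-spectral set for $T$'' to ``$T$ is similar to an operator for which $\T$ is a complete spectral set'' is Paulsen's similarity theorem (see~\cite{Paulsen}), not the Putinar--Sandberg-type result of~\cite{article2}.  Theorem~A goes in the other direction, from one-sided resolvent bounds with constant~$1$ to complete $K$-spectrality of~$\overline{\Omega}$.
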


An operator satisfying the last condition in this theorem is said to
be power bounded.  Van~Casteren improved this result
in~\cite{VanCasteren83}, as follows.

\begin{theoremvc}
  \label{vc83}
  Let $T\in\B(H)$ be an operator with $\sigma(T)\subset\T$.  Assume
  that $T$ satisfies the resolvent estimate
  \begin{equation*}
    \|(T-\lambda)^{-1}\|\leq C(1-|\lambda|)^{-1},\qquad |\lambda|<1,
  \end{equation*}
  and for $1 < r < 2$ and $x\in H$,
  \begin{equation*}
    \int_{|\lambda|=r} \|(T-\lambda)^{-1}x\|^2\,|d\lambda| \leq
    \frac{C\|x\|^2}{r-1}  \quad \text{and}\quad
    \int_{|\lambda|=r} \|(T^*-\lambda)^{-1}x\|^2\,|d\lambda| \leq
    \frac{C\|x\|^2}{r-1}.
  \end{equation*}
  Then $T$ is similar to a unitary operator.
\end{theoremvc}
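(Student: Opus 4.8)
The plan is to produce a bounded, positive, boundedly invertible operator $G$ on $H$ with $T^*GT=G$. Since $\sigma(T)\subset\T$, the operator $T$ is invertible, so once such a $G$ is found, $U:=G^{1/2}TG^{-1/2}$ is an invertible isometry, hence unitary, and $T=G^{-1/2}UG^{1/2}$ is similar to a normal operator. Thus the whole problem reduces to constructing $G$ with $\varepsilon I\le G\le MI$ and $T^*GT=G$.

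First I would rewrite the hypotheses in ``Fourier'' form. As $\sigma(T)\subset\T$ and hence $\sigma(T^{-1})\subset\T$, one has the norm-convergent Laurent expansions $(T-\lambda)^{-1}=-\sum_{n\ge0}\lambda^{-n-1}T^n$ for $|\lambda|>1$ and $(T-\lambda)^{-1}=\sum_{n\ge0}\lambda^nT^{-n-1}$ for $|\lambda|<1$. Parseval's identity on the circle $|\lambda|=r$, together with the substitution $s=r^{-2}$, turns the two integral conditions into the Abel-mean estimates
\begin{equation*}
  (1-s)\sum_{n\ge0}s^n\|T^nx\|^2\le C'\|x\|^2,\qquad
  (1-s)\sum_{n\ge0}s^n\|T^{*n}x\|^2\le C'\|x\|^2\qquad(s_0<s<1),
\end{equation*}
while Cauchy's estimates applied to the $H$-valued holomorphic function $\lambda\mapsto(T-\lambda)^{-1}x$ on $\D$, after optimising the radius, turn the pointwise condition inside $\D$ into the Kreiss-type growth bound $\|T^{-n}\|\le C''n$, $n\ge1$.

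Next I would run the standard Ces\`aro/Abel averaging. Put $Q_s:=(1-s)\sum_{n\ge0}s^nT^{*n}T^n$; by the first Abel-mean estimate $0\le Q_s\le C'I$ for $s$ close to $1$, and a short computation gives the Stein-type identity $Q_s-sT^*Q_sT=(1-s)I$. Taking a weak-$*$ cluster point $G$ of $\{Q_s\}$ as $s\to1^-$ produces $0\le G\le C'I$ with $T^*GT=G$, hence also $T^{-*n}GT^{-n}=G$ for every $n$. The second Abel-mean estimate likewise produces $\tilde G$ with $0\le\tilde G\le C'I$ and $T\tilde GT^*=\tilde G$; if $T=SNS^{-1}$ with $N$ unitary then one may take $G=S^{-*}S^{-1}$ and $\tilde G=SS^*=G^{-1}$, which already signals that the entire difficulty is concentrated in showing that $G$ (equivalently, the bounded semi-inner product $[x,y]:=\langle Gx,y\rangle$) is bounded below, with $\tilde G$ the natural candidate for the ``missing'' lower bound.

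The hard part is precisely this last step, $G\ge\varepsilon I$. None of the three hypotheses suffices on its own --- the Abel-mean estimate is satisfied by the $C_0$-contraction attached to a singular inner function (spectrum on $\T$, yet not similar to a unitary), and the inner pointwise estimate only yields $\|T^{-n}\|=O(n)$, a growth too slow to be compensated directly against the Abel weights --- so all three must be used together. The route I would attempt is: deduce from the two outer quadratic estimates, aided by the inner bound, that $T$ is similar to a \emph{contraction} $V=S_0TS_0^{-1}$; then $\sigma(V)=\sigma(T)\subset\T$ and the inner estimate persists as $\|(V-\lambda)^{-1}\|\le C_1(1-|\lambda|)^{-1}$ on $\D$. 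Splitting $V$ into its unitary and completely non-unitary summands in the manner of Sz.-Nagy--Foia\c s, the characteristic function of the completely non-unitary part is pointwise invertible on $\D$ because its spectrum lies on $\T$, yet the resolvent estimate forces the norm of that pointwise inverse to stay below $C_1(1-|\lambda|)^{-1}$ as $|\lambda|\to1$, which is incompatible with a nontrivial completely non-unitary contraction; hence $V$ is unitary and $G$ is invertible. The main obstacle is thus extracting enough rigidity from resolvent bounds with an \emph{arbitrary} constant (for constants larger than $1$ there are the counterexamples of Markus~\cite{Markus} and of Nikolski and Treil~\cite{NikolskiTreil} in the absence of the remaining hypotheses), i.e.\ orchestrating the pointwise inner bound with the $L^2$-control on the circles $|\lambda|=r\to1^+$ supplied by the two outer estimates so as to defeat the $O(n)$ growth of $\|T^{-n}\|$.
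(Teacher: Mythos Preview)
The paper does not prove this theorem; it is quoted from Van~Casteren~\cite{VanCasteren83} and used as a black box (for instance in the proof of Theorem~\ref{van-casteren}). So there is no proof here to compare yours against.

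On your sketch itself: the Abel-mean construction of $G$ with $T^*GT=G$ and the identification of the lower bound $G\ge\varepsilon I$ as the crux are both standard and correct. The gap is in how you propose to obtain that lower bound. You say you would first show that $T$ is similar to a contraction $V$ and then finish via the characteristic function. But the first step is asserted with no argument, and it is essentially the whole theorem: once $V$ is a contraction with $\sigma(V)\subset\T$ and $\|(V-\lambda)^{-1}\|\le C_1(1-|\lambda|)^{-1}$ on $\D$, $V$ is power bounded and Theorem~VC1 already gives similarity to a unitary, with no need for any characteristic-function reasoning. So your plan amounts to ``reduce VC2 to VC1 by first proving similarity to a contraction'', and that reduction is left unexplained and is at least as hard as VC2 (similarity to a contraction is, by Paulsen's theorem, complete polynomial boundedness, and nothing in the hypotheses obviously delivers that). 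Separately, the characteristic-function step as written is incorrect: an estimate $\|\Theta_V(\lambda)^{-1}\|=O\big((1-|\lambda|)^{-1}\big)$ is \emph{not} ``incompatible with a nontrivial completely non-unitary contraction''; the Sz.-Nagy--Foias criterion~\eqref{eq:NF-criterion} requires the strictly stronger uniform bound $\sup_{\lambda\in\D}\|\Theta_V(\lambda)^{-1}\|<\infty$.
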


By writing the power series for the resolvent, one can check that
every power bounded operator satisfies the last two conditions in this
theorem.

A related result was proved independently by Naboko in~\cite{Naboko}.

\begin{theoremn}
  \label{n}
  Let $T\in\B(H)$ be an operator with $\sigma(T)\subset\T$.  Assume
  that $T$ satisfies the resolvent conditions
  \begin{equation*}
    \int_{|\lambda|=r} \|(T-\lambda)^{-1}x\|^2\,|d\lambda| \leq
    \frac{C\|x\|^2}{r-1}, \qquad 1 < r < 2,\ x\in H,
  \end{equation*}
  and
  \begin{equation*}
    \int_{|\lambda|=r} \|(T^*-\lambda)^{-1}x\|^2\,|d\lambda| \leq
    \frac{C\|x\|^2}{1-r}, \qquad r < 1,\ x\in H.
  \end{equation*}
  Then $T$ is similar to a unitary operator.
\end{theoremn}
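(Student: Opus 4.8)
The plan is to construct a bounded positive invertible operator $G$ on $H$ with $T^*GT=G$. Since $\sigma(T)\subset\T$ forces $0\notin\sigma(T)$, $T$ is invertible, so once $G$ is in hand, $N:=G^{1/2}TG^{-1/2}$ is an invertible isometry, hence unitary, and $T=G^{-1/2}NG^{1/2}$ is similar to a normal operator.

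\emph{Step 1: rewrite the hypotheses as estimates on Abel means of orbits.} Because $\sigma(T)\subset\T$, the expansions $(T-\lambda)^{-1}=-\sum_{n\ge0}\lambda^{-n-1}T^n$ (for $|\lambda|>1$) and $(T^*-\lambda)^{-1}=\sum_{n\ge0}\lambda^{n}(T^*)^{-n-1}$ (for $|\lambda|<1$) converge absolutely. Integrating $\|\,\cdot\,\|^2$ over the circle $|\lambda|=r$ and using orthogonality of the exponentials collapses the two hypotheses, after the substitution $\rho=1/r$ in the first and a reindexing in the second, to
\begin{equation*}
  \sum_{n\ge0}\rho^{2n}\|T^nx\|^2\le\frac{C_1\|x\|^2}{1-\rho}
  \qquad\text{and}\qquad
  \sum_{n\ge0}\rho^{2n}\bigl\|(T^{-n})^*x\bigr\|^2\le\frac{C_2\|x\|^2}{1-\rho},
\end{equation*}
valid for $\tfrac12<\rho<1$ and all $x\in H$, where $(T^{-n})^*=(T^*)^{-n}$. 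For such $\rho$ put $\langle G_\rho x,y\rangle=(1-\rho)\sum_{n\ge0}\rho^{2n}\langle T^nx,T^ny\rangle$; the first estimate says exactly $0\le G_\rho\le C_1 I$, uniformly in $\rho$.

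\emph{Step 2: the lower bound, where the real idea is needed.} The two hypotheses are coupled by the trivial identity
\begin{equation*}
  \bigl\langle T^nx,(T^{-n})^*x\bigr\rangle=\bigl\langle T^{-n}T^nx,x\bigr\rangle=\|x\|^2,\qquad n\ge0 .
\end{equation*}
Multiplying by $\rho^{2n}$, summing, and applying Cauchy--Schwarz in $n$,
\begin{align*}
  \|x\|^2
  &=(1-\rho^2)\sum_{n\ge0}\rho^{2n}\bigl\langle T^nx,(T^{-n})^*x\bigr\rangle\\
  &\le(1-\rho^2)\Bigl(\sum_{n\ge0}\rho^{2n}\|T^nx\|^2\Bigr)^{1/2}\Bigl(\sum_{n\ge0}\rho^{2n}\bigl\|(T^{-n})^*x\bigr\|^2\Bigr)^{1/2}.
\end{align*}
Estimating the last factor by the second hypothesis and using $1-\rho^2\le2(1-\rho)$ gives $\|x\|^2\le 4C_2(1-\rho)\sum_{n\ge0}\rho^{2n}\|T^nx\|^2$, i.e. $G_\rho\ge(4C_2)^{-1}I$ for every $\rho\in(\tfrac12,1)$.

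\emph{Step 3: pass to the limit.} Since $(4C_2)^{-1}I\le G_\rho\le C_1 I$, the net $(G_\rho)_{\rho\to1^-}$ lies in a weak-$*$ compact set and has a weak-$*$ cluster point $G$ with $(4C_2)^{-1}I\le G\le C_1 I$; in particular $G$ is positive, self-adjoint and invertible. A direct computation gives $T^*G_\rho T=\rho^{-2}\bigl(G_\rho-(1-\rho)I\bigr)$, and letting $\rho\to1$ along the cluster subnet (using weak-$*$ continuity of $A\mapsto T^*AT$) yields $T^*GT=G$, completing the construction. One could instead take $G$ to be the Gram operator of the Banach limit $\operatorname{LIM}_{\rho\to1^-}(1-\rho)\sum_{n\ge0}\rho^{2n}\langle T^n\,\cdot\,,T^n\,\cdot\,\rangle$, whose shift-invariance gives $T^*GT=G$ directly. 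The main obstacle, in the sense of where an idea is needed, is Step~2: one must couple the two hypotheses, and the pairing identity $\langle T^nx,(T^{-n})^*x\rangle=\|x\|^2$ does exactly that; Steps~1 and~3 are routine Parseval-type and compactness arguments, the only thing to watch being that the ranges of $\rho$ coming from ``$1<r<2$'' and ``$r<1$'' overlap, which they do.
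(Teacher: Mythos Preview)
Your proof is correct. Note, however, that the paper does not give its own proof of Theorem~N: this result is quoted from Naboko~\cite{Naboko} and used as a black box in the proof of Theorem~\ref{naboko}. So there is no ``paper's proof'' to compare against here.

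That said, your argument is essentially Naboko's original one (also close in spirit to Sz.-Nagy's classical proof that two-sided power-boundedness implies similarity to a unitary). The Parseval-type reduction in Step~1 is standard and correct; the key coupling identity $\langle T^nx,(T^*)^{-n}x\rangle=\|x\|^2$ together with Cauchy--Schwarz in Step~2 is precisely the mechanism that converts the second (adjoint, inside-the-circle) hypothesis into a uniform \emph{lower} bound for the Abel-averaged Gram operators $G_\rho$, which is the genuine content of the theorem. The weak-$*$ compactness and the computation $T^*G_\rho T=\rho^{-2}\bigl(G_\rho-(1-\rho)I\bigr)$ in Step~3 are routine and give $T^*GT=G$ in the limit. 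One minor point worth making explicit for a careful write-up: the absolute convergence of the Neumann series you invoke follows from the spectral radius formula, since $\sigma(T)\subset\T$ forces both $T$ and $T^{-1}$ (hence $(T^*)^{-1}$) to have spectral radius exactly $1$; this justifies the term-by-term integration.
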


Each of Theorems~\ref{vc80}, \ref{vc83} and \ref{n} in fact gives
necessary and sufficient conditions for similarity to a unitary
operator.

In these theorems, it is possible to replace $T$ by $T^*$, $T^{-1}$ or
$T^{*-1}$, obtaining yet other criteria.  For related results and
conditions, we refer the reader to~\cite{Malamud2}, where some results
close to Naboko's were independently obtained, and
to~\cite{NikolskiBook}*{Section 1.5.6}.  The conditions in
Theorems~\ref{vc83} and \ref{n} are not comparable, in that there is
no easy way of deducing one from the other.

Additionally, we present analogues of criteria of Van~Casteren and
Naboko, generalized from the circle to a smooth curve $\Gamma$.  The
corresponding integral conditions use the existence of a family of
curves, tending ``nicely'' to $\Gamma$ (from both sides) in place of
circles $|\lambda| = r$; details are given at the beginning of
Section~\ref{meansquare}.

The paper is organized as follows.  Sections~\ref{Sect-Dynkin}
and~\ref{sect-trasplantation} are preparatory.  The first of these
contains the basic facts about the pseudoanalytic extension of
functions and Dynkin's functional calculus for an operator $T$ with
first order resolvent growth near the spectrum (that is, growth which
is linear in the resolvent).  In Section~\ref{sect-trasplantation}, we
use this calculus to show that the resolvent estimates for an operator
$T$ with $\sigma(T)\subset \Gamma$ are equivalent to corresponding
resolvent estimates for $\eta(T)$, where $\eta$ is a smooth
diffeomorphism from $\Gamma$ to $\T$, so that $\sigma(\eta(T))\subset
\T$.  Section~\ref{proof-thm-1} deals with the proof of
Theorem~\ref{two-sided}, while in Section~\ref{meansquare}, we
formulate and prove analogues of mean-square criteria by Van~Casteren
and Naboko.  Finally, Section~\ref{sec-examples} contains a brief
discussion of related results in the literature and a few examples.

The authors are grateful to Maria Gamal' for several insightful
remarks and pointers to the literature.

\section{Dynkin's functional calculus}
\label{Sect-Dynkin}

Our key technical tool will be a generalization of the Riesz-Dunford
functional calculus as defined by Dynkin in~\cite{Dynkin72} using the
Cauchy-Green formula.  Before going into details, we need to set down
some definitions and notation.

Let $\Gamma \subset \C$ be a Jordan curve of class $C^{1+\alpha}$,
$0<\alpha<1$.  This means that $\Gamma$ is the image of $\T$ under a
bijective map $\psi:\T\to\Gamma$ such that $\psi \in C^1(\T)$, $\psi'$
does not vanish and $\psi'$ is H\"older $\alpha$; that is,
\begin{equation*}
  |\psi'(z)-\psi'(w)|\leq C|z-w|^\alpha, \qquad z,w\in\T.
\end{equation*}

A function $f:\Gamma \to \C$ is said to belong to
$C^{1+\alpha}(\Gamma)$ if $f\circ\psi \in C^1(\T)$ and $(f\circ
\psi)'$ is H\"older $\alpha$.  As an important example of such a
function, take $f = \psi^{-1}$.  This function has the additional
properties that $f(\Gamma) = \mathbb T$, $f^{-1}$ exists as a map from
$\mathbb T$ to $\Gamma$ and is differentiable.

The norm for $f \in C^{1+\alpha}(\Gamma)$ is defined as
\begin{equation*}
  \|f\|_{C^{1+\alpha}(\Gamma)} = \|f \circ \psi\|_{C(\T)} +
  \|(f\circ\psi)'\|_{C(\T)} + \|(f \circ \psi)'\|_\alpha,
\end{equation*}
where
\begin{equation*}
  \|g\|_\alpha = \sup_{z,w \in \T, z\neq w}
  \frac{|g(z)-g(w)|}{|z-w|^\alpha}.
\end{equation*}
The definition of this norm depends on the choice of the
parametrization $\psi$, but different choices yield equivalent norms.

Let $T \in \B(H)$ be an operator with $\sigma(T) \subset \Gamma$,
where $\Gamma$ is a Jordan curve of class $C^{1+\alpha}$.  Assume that
$T$ satisfies the following resolvent growth condition:
\begin{equation}
  \label{eq:resolvent-estimate}
  \|(T-\lambda)^{-1}\|\leq \frac{C}{\dist(\lambda,\Gamma)},\qquad
  \lambda \in \C\setminus\Gamma.
\end{equation}
Following Dynkin~\cite{Dynkin72}, a $C^{1+\alpha}(\Gamma)$ functional
calculus for $T$ can be defined.  Dynkin defines his calculus for a
scale of function algebras including $C^{1+\alpha}$ and operators
satisfying other resolvent estimates \eqref{eq:resolvent-estimate}.
Only the case relevant to this paper is discussed here.

To begin, recall the notion of pseudoanalytic extension.  If $f \in
C^{1+\alpha}(\Gamma)$, then by~\cite{Dynkin74}*{Theorem~2} there is a
function $F \in C^1(\C)$ such that $F|\Gamma = f$ and
\begin{equation}
  \label{eq:pseudoanalytic-estimate}
  \left|\frac{\partial F}{\partial \overline{z}}(z)\right|
  \leq C\|f\|_{C^{1+\alpha}(\Gamma)} \dist(z,\Gamma)^\alpha.
\end{equation}
Here, $\frac{\partial}{\partial \overline{z}} =
\frac{1}{2}\left(\frac{\partial}{\partial x} + i
  \frac{\partial}{\partial y}\right)$ and $C$ is a constant depending
only on $\Gamma$.  Every such function $F$ which extends $f$ and
satisfies \eqref{eq:pseudoanalytic-estimate} is called \textit{a
  pseudoanalytic extension of} $f$.

Dynkin uses the pseudoanalytic extension $F$ to define the operator
$f(T)$ by means of the Cauchy-Green integral formula.  Let $D$ be a
domain with smooth boundary such that $\Gamma \subset D$, and define
\begin{equation*}
  f(T) = \frac{1}{2\pi i} \int_{\partial D} F(\lambda) (\lambda -
  T)^{-1}\,d\lambda - \frac{1}{\pi} \iint_D \frac{\partial F}{\partial
    \overline{z}}(\lambda) (\lambda - T)^{-1}\,dA(\lambda).
\end{equation*}
The inequality \eqref{eq:pseudoanalytic-estimate} for $F$ and the
resolvent estimate \eqref{eq:resolvent-estimate} for $T$ imply that
the second integral is well defined.  It is possible to prove that the
definition does not depend on the particular choice of $D$ or
pseudoanalytic extension $F$.

This calculus has the usual properties of a functional calculus: it is
continuous from $C^{1+\alpha}(\Gamma)$ to $\B(H)$, is linear and
multiplicative, and coincides with the natural definition of $f(T)$ if
$f$ is rational.  It also satisfies the spectral mapping property:
$\sigma(f(T)) = f(\sigma(T))$.

\section{Passing from $\Gamma$ to $\T$}
\label{sect-trasplantation}

We now explain how to use Dynkin's functional calculus to pass from an
operator $T$ with $\sigma(T) \subset \Gamma$ to an operator $A$ with
$\sigma(A)\subset \T$.  The main result of this section is
Theorem~\ref{thm-main}, which relates the estimates for the resolvents
of $T$ and $A$.  In this way, resolvent growth conditions for $T$
imply equivalent conditions for $A$, and conversely.  This equivalence
plays a key role in what follows.

The next lemma gives regularity conditions for a certain function
$\eta: \Gamma \to \T$, which enables the construction of the operator
$A = \eta(T)$ using the Dynkin functional calculus.

\begin{lemma}
  \label{lemma-eta}
  Let $\Gamma$ be a Jordan curve of class $C^{1+\alpha}$ and $\eta \in
  C^{1+\alpha}(\Gamma)$ a function such that $\eta(\Gamma) = \T$ and
  $\eta^{-1}:\T \to \Gamma$ exists and is differentiable.  Fix any
  pseudoanalytic extension of $\eta$ to $\C$, also denoted by $\eta$.
  Then there is a neighborhood $U$ of $\Gamma$ such that $\eta : U \to
  \eta(U)$ is a $C^1$ diffeomorphism, $\eta(U)$ is a neighborhood of
  $\T$, and $\eta$ is bi-Lipschitz in $U$; that is, there are positive
  constants $c$ and $C$ such that
  \begin{equation*}
    c|z-w| \leq |\eta(z)-\eta(w)| \leq C|z-w|, \qquad z,w\in U.
  \end{equation*}
\end{lemma}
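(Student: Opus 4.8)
The plan is to show that the pseudoanalytic extension $\eta$ has nonvanishing Jacobian along $\Gamma$, and then invoke the inverse function theorem together with a compactness argument. First I would compute the (real) differential of $\eta$ at a point $z_0 \in \Gamma$. Since $\eta \in C^{1+\alpha}(\Gamma)$ with $\eta^{-1}:\T\to\Gamma$ differentiable, the restriction $\eta|_\Gamma$ has nonvanishing tangential derivative at every point. Writing the Cauchy--Riemann operators, the Jacobian of $\eta$ as a map $\C\to\C$ equals $\left|\frac{\partial\eta}{\partial z}\right|^2 - \left|\frac{\partial\eta}{\partial\overline z}\right|^2$. On $\Gamma$ itself we have $\dist(z,\Gamma)=0$, so the pseudoanalytic estimate \eqref{eq:pseudoanalytic-estimate} forces $\frac{\partial\eta}{\partial\overline z}=0$ on $\Gamma$; hence the Jacobian on $\Gamma$ is $\left|\frac{\partial\eta}{\partial z}\right|^2$. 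The key point is that $\frac{\partial\eta}{\partial z}(z_0)\neq 0$: indeed, with $\frac{\partial\eta}{\partial\overline z}(z_0)=0$, the full differential of $\eta$ at $z_0$ is complex-linear, equal to multiplication by $\frac{\partial\eta}{\partial z}(z_0)$, and its restriction to the tangent line of $\Gamma$ at $z_0$ is the tangential derivative of $\eta|_\Gamma$, which is nonzero by the differentiability of $\eta^{-1}$. So $\frac{\partial\eta}{\partial z}(z_0)\neq 0$ for every $z_0\in\Gamma$, and the Jacobian is strictly positive there.

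Next, since $\eta\in C^1(\C)$, the Jacobian is continuous, so it is bounded below by some $\delta>0$ on a neighborhood of the compact set $\Gamma$. By the inverse function theorem, each $z_0\in\Gamma$ has an open neighborhood $V_{z_0}$ on which $\eta$ is a $C^1$ diffeomorphism onto its (open) image. To get a single neighborhood $U$ of $\Gamma$ on which $\eta$ is globally injective, I would use a standard compactness/tube argument: cover $\Gamma$ by finitely many such $V_{z_0}$, and use the fact that $\eta$ is injective on $\Gamma$ (since $\eta|_\Gamma=\eta:\Gamma\to\T$ is a bijection) together with uniform continuity to rule out pairs of nearby points in $U$ with the same image. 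Concretely, if no such $U$ existed, there would be sequences $z_n\neq w_n$ with $\dist(z_n,\Gamma)\to 0$, $\dist(w_n,\Gamma)\to0$, and $\eta(z_n)=\eta(w_n)$; passing to subsequences, $z_n\to p$, $w_n\to q$ with $p,q\in\Gamma$ and $\eta(p)=\eta(q)$, so $p=q$, contradicting local injectivity near $p$ once $n$ is large. Shrinking $U$ if needed, $\eta:U\to\eta(U)$ is then a $C^1$ diffeomorphism onto an open set $\eta(U)$, which contains $\eta(\Gamma)=\T$, hence is a neighborhood of $\T$.

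Finally, the bi-Lipschitz estimate: on a possibly smaller neighborhood $U$ with $\overline U$ compact, the norms $\|D\eta\|$ and $\|D(\eta^{-1})\|$ are bounded (the latter on $\overline{\eta(U)}$), so the mean value inequality applied along segments — after arranging $U$ to be convex, e.g.\ a thin tubular neighborhood, or by a routine chaining argument if not — gives $|\eta(z)-\eta(w)|\le C|z-w|$ and the reverse inequality $|\eta(z)-\eta(w)|\ge c|z-w|$ from the bound on $D(\eta^{-1})$. The main obstacle is the passage from \emph{local} diffeomorphism (immediate from the inverse function theorem) to a \emph{global} diffeomorphism on a full neighborhood of $\Gamma$; this is where the compactness of $\Gamma$ and the global injectivity of $\eta|_\Gamma$ are essential, and one must be slightly careful that the neighborhood can be taken to retract onto $\Gamma$ so that the connectedness arguments go through.
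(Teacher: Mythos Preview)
Your proposal is correct and rests on the same core ideas as the paper: $\partial\eta/\partial\overline z=0$ on $\Gamma$ forces the real differential there to be complex-linear and nonzero (since $\eta|_\Gamma$ is a diffeomorphism onto $\T$), so the Jacobian is nonvanishing on $\Gamma$; compactness of $\Gamma$ then upgrades local invertibility to a global diffeomorphism on a tube.

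The organization differs slightly. The paper proves the bi-Lipschitz bound \emph{first}, by an explicit near/far dichotomy: cover $\Gamma$ by finitely many balls on which $\eta$ is locally bi-Lipschitz, and for pairs $x,y$ with $|x-y|$ exceeding a fixed $\varepsilon_0$ use continuity and injectivity of $\eta|_\Gamma$ to get a uniform lower bound $|\eta(x)-\eta(y)|\ge\widetilde\delta$, which combined with the diameter of the tube gives the lower Lipschitz estimate. Injectivity then drops out of bi-Lipschitz for free. You instead prove global injectivity by a sequential contradiction and then extract bi-Lipschitz from bounds on $D\eta$ and $D(\eta^{-1})$ via the mean value inequality. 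Both routes are fine; the paper's dichotomy is a little cleaner because it sidesteps the convexity issue you yourself flag. On that point: a thin tubular neighborhood of a Jordan curve is \emph{not} convex (it is annular), so your parenthetical ``e.g.\ a thin tubular neighborhood'' is off --- you really do need the chaining/arc-length argument (or, equivalently, the paper's near/far split) to get from bounds on $D(\eta^{-1})$ to a global lower Lipschitz constant.
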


A consequence of this lemma used frequently below is that for
$\lambda\in U$, $\dist(\lambda,\Gamma)$ and $\dist(\eta(\lambda),\T)$
are comparable.

\begin{proof}[Proof of Lemma~\ref{lemma-eta}]
  Since $\partial \eta/\partial \overline{z} \equiv 0$ on $\Gamma$,
  the condition that $\eta^{-1}:\T \to \Gamma$ is differentiable
  implies that the differential of $\eta$ is non-singular on $\Gamma$.
  Therefore, for each point $x \in \Gamma$, there is an open ball
  $B(x,r(x))$ of center $x$ and radius $r(x)$ such that $\eta$ is
  bi-Lipschitz on $B(x,r(x))$.  By a compactness argument, $\eta$ is
  Lipschitz on some neighborhood of $\Gamma$.

  Pass to a finite collection $\{x_j\}$ on $\Gamma$ such that the
  balls $B(x_j, r(x_j)/2)$ cover $\Gamma$ and put $\varepsilon_0 =
  \min r(x_j)/2$.  Since $\eta|\Gamma$ is injective,
  \begin{equation*}
    \delta := \min_{\substack{|x-y|\geq \varepsilon_0
        \\ x,y\in\Gamma}}|\eta(x)-\eta(y)| > 0.
  \end{equation*}
  It follows that there is some $\rho>0$ such that
  \begin{equation*}
    {\widetilde \delta} :=
    \min_{\substack{|x-y|\geq \varepsilon_0 \\
        {\dist(x,\Gamma) \leq \rho,\
          \dist(y,\Gamma) \leq \rho
        }
      } }
    |\eta(x)-\eta(y)| > 0.
  \end{equation*}
  Now check that $\eta$ is bi-Lipschitz on the open set
  \begin{equation*}
    W = \bigg(\bigcup_j B\Big(x_j, \frac{r(x_j)}{2}\Big)\bigg) \cap
    \{x \in \C :
    \dist(x,\Gamma) < \rho
    \}.
  \end{equation*}
  Given points $x,y \in W$, then either $|x-y|< \varepsilon_0$, so
  that $x,y$ both belong to the same ball $B(x_k,r(x_k))$, where
  $\eta$ is bi-Lipschitz, or $|x-y|\geq \varepsilon_0$.  In the latter
  case,
  \begin{equation*}
    |\eta(x)-\eta(y)|\geq
    {\boldmath \widetilde{\delta} \geq
      \widetilde{\delta}\big(\operatorname{diam} W\big)^{-1}\,|x-y|.
    }
  \end{equation*}

  The injectivity of $\eta$ follows from the bi-Lipschitz property.
  The fact that it is possible to choose $U \subset W$ so that $\eta$
  is a $C^1$ diffeomorphism of $U$ is true because the differential of
  $\eta$ is non-singular in some neighborhood of $\Gamma$.  Finally,
  since $\eta(\Gamma) = \T$ and $\eta$ is an open mapping by being
  bi-Lipschitz, $\eta(U)$ is an open neighborhood of $\T$.
\end{proof}

The next theorem relates the resolvents estimates for $T$ and
$\eta(T)$.

\begin{theorem}
  \label{thm-main}
  Let $\Gamma$, $\eta$ and $U$ be as in Lemma~\ref{lemma-eta} and $T
  \in \B(H)$ be an operator satisfying the resolvent estimate
  \eqref{eq:resolvent-estimate}.  Let the operator $\eta(T)$ be
  defined by the $C^{1+\alpha}$-functional calculus for $T$.  Then
  $\sigma(\eta(T)) \subset \T$, and for some $C\geq 1$ and depending
  on $\Gamma$, $\eta$ ,$T$, but not on $\lambda$ or $x$,
  \begin{equation*}
    C^{-1}\|(T-\lambda)^{-1}x\| \leq \|(\eta(T)-\eta(\lambda))^{-1}x\|
    \leq C\|(T-\lambda)^{-1}x\|,
    \quad \lambda \in U\setminus \Gamma,\ x\in H.
  \end{equation*}
\end{theorem}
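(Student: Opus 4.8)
The inclusion $\sigma(\eta(T))\subset\T$ is immediate from the spectral mapping property of Dynkin's calculus: $\sigma(\eta(T))=\eta(\sigma(T))\subset\eta(\Gamma)=\T$. For the resolvent comparison, fix $\lambda\in U\setminus\Gamma$. By Lemma~\ref{lemma-eta}, $\eta$ is injective on $U$ and $\eta(\Gamma)=\T$, so $\eta(\lambda)\notin\T$ and $\eta(T)-\eta(\lambda)$ is invertible; moreover $w\mapsto w-\lambda$ and $w\mapsto\eta(w)-\eta(\lambda)$ are non-vanishing on $\Gamma$, so both $h_\lambda(w):=(w-\lambda)/(\eta(w)-\eta(\lambda))$ and $1/h_\lambda$ belong to $C^{1+\alpha}(\Gamma)$. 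Applying linearity and multiplicativity of the calculus to the identities $h_\lambda(w)\,(\eta(w)-\eta(\lambda))=w-\lambda$ and $(1/h_\lambda)(w)\,(w-\lambda)=\eta(w)-\eta(\lambda)$ on $\Gamma$ gives $h_\lambda(T)=(T-\lambda)(\eta(T)-\eta(\lambda))^{-1}$ and $(1/h_\lambda)(T)=(\eta(T)-\eta(\lambda))(T-\lambda)^{-1}$, whence for every $x\in H$
\[
  \|(\eta(T)-\eta(\lambda))^{-1}x\|\le\|h_\lambda(T)\|\,\|(T-\lambda)^{-1}x\|,\qquad
  \|(T-\lambda)^{-1}x\|\le\|(1/h_\lambda)(T)\|\,\|(\eta(T)-\eta(\lambda))^{-1}x\|.
\]
Thus the theorem reduces to the bound $\sup_{\lambda\in U\setminus\Gamma}\bigl(\|h_\lambda(T)\|+\|(1/h_\lambda)(T)\|\bigr)<\infty$; one then takes $C=\max\{1,\sup_\lambda\|h_\lambda(T)\|,\sup_\lambda\|(1/h_\lambda)(T)\|\}$.

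The difficulty is that $\|h_\lambda\|_{C^{1+\alpha}(\Gamma)}$ and $\|1/h_\lambda\|_{C^{1+\alpha}(\Gamma)}$ grow like $\dist(\lambda,\Gamma)^{-1}$ as $\lambda\to\Gamma$, so the continuity of Dynkin's calculus does not by itself give a uniform bound. Instead, I would compute $h_\lambda(T)$ from the Cauchy--Green formula with the \emph{explicit} extension $H_\lambda(z):=(z-\lambda)/(\eta(z)-\eta(\lambda))$ formed from the fixed pseudoanalytic extension $\eta$. This $H_\lambda$ is $C^1$ and bounded on $U\setminus\{\lambda\}$, restricts to $h_\lambda$ on $\Gamma$, and --- the key point --- since $z-\lambda$ is holomorphic,
\[
  \frac{\partial H_\lambda}{\partial\bar z}
  =\frac{-(z-\lambda)\,\partial\eta/\partial\bar z}{(\eta(z)-\eta(\lambda))^{2}},
  \qquad\text{whence}\qquad
  \Bigl|\frac{\partial H_\lambda}{\partial\bar z}(z)\Bigr|
  \le\frac{C\,\dist(z,\Gamma)^{\alpha}}{|z-\lambda|},\qquad z\in U\setminus\{\lambda\},
\]
by \eqref{eq:pseudoanalytic-estimate} applied to $\eta$ and the lower bound $|\eta(z)-\eta(\lambda)|\ge c\,|z-\lambda|$ of Lemma~\ref{lemma-eta}: the factor $z-\lambda$ in the numerator cancels one of the two powers of $|z-\lambda|$ coming from the squared denominator, so the only singularity, at $z=\lambda$, is the integrable $|z-\lambda|^{-1}$. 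After a harmless smoothing of $H_\lambda$ in a shrinking ball about the single point $\lambda$, one checks --- using the independence of the Cauchy--Green expression on the domain $D\supset\Gamma$, the fact that it is determined by the values of the extension in an arbitrarily thin tube around $\Gamma$, and the extension-independence of Dynkin's calculus --- that
\[
  h_\lambda(T)=\frac{1}{2\pi i}\int_{\partial D}H_\lambda(\mu)(\mu-T)^{-1}\,d\mu
  -\frac{1}{\pi}\iint_D\frac{\partial H_\lambda}{\partial\bar\mu}(\mu)(\mu-T)^{-1}\,dA(\mu).
\]
The same applies to $1/h_\lambda$, whose extension $(\eta(z)-\eta(\lambda))/(z-\lambda)$ satisfies $\partial/\partial\bar z=(\partial\eta/\partial\bar z)/(z-\lambda)$ and hence the same bound.

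It then remains to bound the right-hand side uniformly in $\lambda$. Taking $D=\{\dist(\cdot,\Gamma)<\rho_0\}\subset U$ with smooth boundary, the boundary integral is bounded uniformly because $|H_\lambda|\le c^{-1}$ on $U$ and $\|(\mu-T)^{-1}\|\le C\rho_0^{-1}$ on $\partial D$. For the area integral, combining the displayed bound with \eqref{eq:resolvent-estimate} makes the integrand $\lesssim\dist(\mu,\Gamma)^{\alpha-1}|\mu-\lambda|^{-1}$, so everything comes down to the estimate
\[
  \iint_D\frac{\dist(\mu,\Gamma)^{\alpha-1}}{|\mu-\lambda|}\,dA(\mu)\le C,\qquad\lambda\in D,
\]
with $C$ independent of $\lambda$. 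I would prove this by passing to coordinates $(s,t)$ near $\Gamma$, with $s$ arc length along $\Gamma$ and $t$ the signed distance, so that $\dist(\mu,\Gamma)=|t|$: then $|t|^{\alpha-1}$ is integrable transverse to $\Gamma$ because $\alpha>0$, $|\mu-\lambda|^{-1}$ is locally integrable, and integrating in $s$ first produces only a logarithmic factor; the single delicate region --- a $\delta$-neighborhood of the point of $\Gamma$ closest to $\lambda$ on the side of $\lambda$, where $\delta=\dist(\lambda,\Gamma)$ --- contributes $O(\delta^{\alpha}\log(1/\delta))$, which stays bounded as $\lambda\to\Gamma$. Taking $C$ as above completes the proof.

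The main obstacle is the pair of technical points in the middle paragraph: first, justifying that the \emph{non}-pseudoanalytic extension $H_\lambda$ (whose $\bar\partial$ blows up near $\lambda$, unlike a genuine pseudoanalytic extension of $h_\lambda$, but only at a single point far from $\sigma(T)$) nevertheless represents $h_\lambda(T)$, which needs a careful inspection of Dynkin's construction; and second, the uniform integral estimate of the last paragraph, which is exactly where one uses $\alpha>0$ together with the comparability of $\dist(\cdot,\Gamma)$ and $\dist(\eta(\cdot),\T)$ supplied by Lemma~\ref{lemma-eta}.
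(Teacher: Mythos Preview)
Your proof is correct and follows the same approach as the paper: reduce to uniform boundedness of $h_\lambda(T)$ and $(1/h_\lambda)(T)$ (the paper's $\psi_\lambda(T)$ and $\varphi_\lambda(T)$) via the Cauchy--Green representation using the explicit extension built from the fixed pseudoanalytic extension of $\eta$, then bound the resulting area integral by $\iint_D\dist(\mu,\Gamma)^{\alpha-1}|\mu-\lambda|^{-1}\,dA(\mu)$. The only minor implementation differences are that the paper handles the singularity at $\lambda$ by excising $B(\lambda,\varepsilon)$ from $D$ and showing the extra boundary integral vanishes as $\varepsilon\to0$ (rather than your smoothing), and bounds the final integral via the bi-Lipschitz change of variables $\zeta=\eta(z)$ to reduce to an explicit annulus integral (its Lemma~\ref{lemma-integral}) rather than via $(s,t)$-coordinates.
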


\begin{proof}
  The fact that $\sigma(\eta(T)) \subset \T$ follows from the spectral
  mapping theorem for the Dynkin functional calculus.

  For $\lambda \in U\setminus\Gamma$, define functions
  $\varphi_\lambda, \psi_\lambda \in C^{1+\alpha}(\Gamma)$ by
  \begin{equation}
    \label{eq:phi-psi}
    \varphi_\lambda(z) = \frac{\eta(z)-\eta(\lambda)}{z-\lambda},
    \qquad
    \psi_\lambda(z) = \frac{z-\lambda}{\eta(z)-\eta(\lambda)}.
  \end{equation}
  The operators $\varphi_\lambda(T)$ and $\psi_\lambda(T)$ are thus
  defined.  In fact,
  \begin{equation*}
    \varphi_\lambda(T) = (\eta(T)-\eta(\lambda))(T-\lambda)^{-1},
    \qquad \psi_\lambda(T) = (T-\lambda)(\eta(T)-\eta(\lambda))^{-1}.
  \end{equation*}
  Hence it suffices to show that
  \begin{equation*}
    \|\varphi_\lambda(T)\|\leq C_0,\qquad \|\psi_\lambda(T)\| \leq C_0,
  \end{equation*}
  for $C_0$ independent of $\lambda$.

  In fact, the functions $\varphi_\lambda$ and $\psi_\lambda$ are in
  $U\setminus\{\lambda\}$ and since $\eta$ is bi-Lipschitz,
  \begin{equation*}
    |\varphi_\lambda(z)| \leq C_1, \qquad |\psi_\lambda(z)| \leq C_1,
    \qquad z \in U\setminus\{\lambda\}.
  \end{equation*}
  Let $D$ be a domain with smooth boundary such that $\Gamma \subset D
  \subset \overline{D} \subset U$ and $\varepsilon > 0$, to be chosen
  later.

  By the Dynkin functional calculus, for $\lambda \in D$ and for
  $\varepsilon$ chosen small enough so that
  $\overline{B(\lambda,\varepsilon)} \subset D$,
  \begin{equation*}
    \begin{split}
      \varphi_\lambda(T) = &\frac{1}{2\pi i}\int_{\partial D}
      \varphi_\lambda(z) (z-T)^{-1}\, dz -\frac{1}{2\pi
        i}\int_{\partial B(\lambda,\varepsilon)} \varphi_\lambda(z)
      (z-T)^{-1}\, dz\\
      &- \frac{1}{\pi} \iint_{D\setminus B(\lambda,\varepsilon)}
      \frac{\partial\varphi_\lambda}{\partial\overline{z}}(z)
      (z-T)^{-1}\,dA(z).
    \end{split}
  \end{equation*}
  The case $\lambda \notin D$ is similar.

  The norm $\|\varphi_\lambda(T)\|$ is bounded by estimating the three
  terms separately.  For the second term, if $\varepsilon <
  \dist(\lambda,\Gamma)$, then
  \begin{equation*}
    \int_{\partial B(\lambda,\varepsilon)}
    |\varphi_\lambda(z)|\|(z-T)^{-1}\|\,|dz|
    \leq C_2\varepsilon(\dist(\lambda,\Gamma)-\varepsilon)^{-1}.
  \end{equation*}
  Letting $\varepsilon \to 0$, it is seen that this term is
  negligible.

  The norm of the first term is bounded by
  \begin{equation*}
    \frac{1}{2\pi} \int_{\partial D} |\varphi_\lambda(z)|
    \|(z-T)^{-1}\|\,|dz| \leq C_3 \dist(\partial D, \Gamma)^{-1}
    \operatorname{length}(\partial D).
  \end{equation*}

  Finally, by using Lemma~\ref{lemma-eta}, the norm of the third term
  is bounded by
  \begin{equation*}
    \begin{split}
      & \iint_{D\setminus B(\lambda,\varepsilon)}
      \left|\frac{\partial\varphi_\lambda}{\partial\overline{z}}(z)
      \right| \|(z-T)^{-1}\|\,dA(z) \\
      \leq & \iint_D \frac{1}{|z-\lambda|}
      \left|\frac{\partial
          \eta}{\partial\overline{z}}(z)\right|\|(T-\lambda)^{-1}\|
      \,dA(z)  \\
      \leq & \,C_4\iint_D
      |z-\lambda|^{-1}\dist(z,\Gamma)^{\alpha-1}\,dA(z) \\
      \leq & \,C_5
      \iint_D |\eta(z)-\eta(\lambda)|^{-1}\dist(\eta(z),\T)^{\alpha-1}
      \,dA(z) \\
      \leq & \,C_6 \iint_{\eta(D)} |\zeta - \eta(\lambda)|^{-1}
      \dist(\zeta,\T)^{\alpha-1}\,dA(\zeta) \\
      \leq & \,C_6 \int_{a\leq
        |\zeta| \leq b} |\zeta-\eta(\lambda)|^{-1}
      |1-|\zeta||^{\alpha-1}\,dA(\zeta).
    \end{split}
  \end{equation*}
  The change of variables $\zeta = \eta(z)$ has been performed and
  choice $a < b$ is made so that the set $\eta(D)$ is contained in the
  annulus $a \leq |\zeta| \leq b$.  By Lemma~\ref{lemma-integral}
  below, the last term in this chain of inequalities is smaller than a
  constant which is independent of $\lambda$.  Thus
  $\|\varphi_\lambda(T)\|\leq C_0$, with $C_0$ independent of
  $\lambda$.

  The proof that $\|\psi_\lambda(T)\|\leq C_0$ is very similar, in
  this case using that
  \begin{equation*}
    \begin{split}
      \left|\frac{\partial \psi_\lambda}{\partial \overline
          z}(z)\right| &=
      \frac{|z-\lambda|}{|\eta(z)-\eta(\lambda)|^2}
      \left|\frac{\partial \eta}{\partial \overline z}(z)\right| \leq
      C_7|\eta(z)-\eta(\lambda)|^{-1} \left|\frac{\partial
          \eta}{\partial \overline z}(z)\right|.
    \end{split}
  \end{equation*}
  The remaining bounds are obtained in the same way.  The proof is
  finished (modulo the next lemma).
\end{proof}

\begin{lemma}
  \label{lemma-integral}
  Let $0 < a < 1 < b$ and $a \leq |w| \leq b$ and $-1<\beta <0$.  Then
  for some $C$ independent of $w$,
  \begin{equation*}
    \iint_{a \leq |z| \leq b} |z-w|^{-1}|1-|z||^\beta\,dA(z) \leq C.
  \end{equation*}
\end{lemma}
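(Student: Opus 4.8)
The integrand has two potential singularities: the factor $|z-w|^{-1}$, which is locally integrable in the plane because the exponent $1$ is smaller than $2$, and the factor $|1-|z||^\beta$, which is locally integrable near $\T$ because $\beta>-1$. Each is harmless in isolation; the only real issue is uniformity in $w$, and the worst case is when $w$ lies on (or very near) $\T$, so that the two singularities almost collide. The plan is therefore to introduce the finite positive Borel measure $d\mu(z)=|1-|z||^\beta\,dA(z)$ restricted to the annulus $\{a\le|z|\le b\}$, whose total mass
\begin{equation*}
  M:=\iint_{a\le|z|\le b}|1-|z||^\beta\,dA(z)=2\pi\int_a^b|1-\rho|^\beta\,\rho\,d\rho
\end{equation*}
is finite precisely because $\beta>-1$, and to establish the following ball estimate, uniform in $w$: there is a constant $C=C(\beta)$ (one can take $C=4\pi/(\beta+1)$) such that
\begin{equation*}
  \mu\bigl(B(w,s)\bigr)\le C\,s^{\beta+2},\qquad s>0.
\end{equation*}

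To prove the ball estimate I would pass to polar coordinates $z=\rho e^{i\theta}$ and write $\mu\bigl(B(w,s)\bigr)=\int_a^b|1-\rho|^\beta\,\rho\, m_\rho(w,s)\,d\rho$, where $m_\rho(w,s)$ denotes the angular measure of the arc $\{\,\theta:|\rho e^{i\theta}-w|<s\,\}$. Two elementary facts control this. First, the arc is empty unless $\bigl||w|-\rho\bigr|<s$, so only radii $\rho$ in an interval of length $2s$ about $|w|$ contribute. Second, the arc $\{|z|=\rho\}\cap B(w,s)$ has angular length at most $2\pi s/\rho$: when it is a proper arc a chord of length at most $2s$ subtends it, and the estimate $\sin x\ge\tfrac2\pi x$ on $[0,\tfrac\pi2]$ gives the bound; when it is the whole circle one has $\rho<s$ anyway. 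Consequently $\rho\,m_\rho(w,s)\le 2\pi s$, and since $|t|^\beta$ is even and decreasing in $|t|$,
\begin{equation*}
  \mu\bigl(B(w,s)\bigr)\le 2\pi s\int_{|w|-s}^{|w|+s}|1-\rho|^\beta\,d\rho
  \le 2\pi s\int_{-s}^{s}|t|^\beta\,dt=\frac{4\pi}{\beta+1}\,s^{\beta+2}.
\end{equation*}

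With the ball estimate in hand I would conclude by the layer-cake formula, noting $\{z:|z-w|^{-1}>t\}=B(w,1/t)$ and substituting $s=1/t$:
\begin{equation*}
  \iint_{a\le|z|\le b}|z-w|^{-1}|1-|z||^\beta\,dA(z)=\int_0^\infty\mu\bigl(B(w,1/t)\bigr)\,dt=\int_0^\infty\mu\bigl(B(w,s)\bigr)\,\frac{ds}{s^2}.
\end{equation*}
Splitting at $s=1$: for $s\le1$ the integrand is at most $\tfrac{4\pi}{\beta+1}s^{\beta+2}s^{-2}=\tfrac{4\pi}{\beta+1}s^{\beta}$, which is integrable on $(0,1)$ exactly because $\beta>-1$; for $s\ge1$ one uses $\mu\bigl(B(w,s)\bigr)\le M$ together with $\int_1^\infty s^{-2}\,ds=1$. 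Both pieces are bounded by a constant depending only on $a,b,\beta$, which is the assertion. (Equivalently, one may avoid the layer-cake formula by decomposing the annulus into the dyadic shells $2^{-k-1}\le|z-w|<2^{-k}$, bounding $|z-w|^{-1}\le2^{k+1}$ and the $\mu$-mass of the $k$-th shell by $C\,2^{-k(\beta+2)}$, and summing a geometric series with ratio $2^{-(\beta+1)}<1$.) The single delicate point is the ball estimate: one must use \emph{both} the radial localization and the chord-to-arc angular bound, since discarding either leaves only $\mu\bigl(B(w,s)\bigr)=O(s^{\beta+1})$ or $O(s)$, for which the resulting $s$-integral diverges at the origin.
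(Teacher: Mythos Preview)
Your proof is correct and takes a genuinely different route from the paper's.

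The paper first rotates so that $w>0$ is real, passes to polar coordinates, and uses the elementary comparison $|re^{i\theta}-w|^{-1}\le C_0\,|r+i\theta-w|^{-1}$ to replace the annulus by the rectangle $[a,b]\times[-\pi,\pi]$ with the ``flattened'' kernel $|\zeta-w|^{-1}|1-\Re\zeta|^\beta$. It then splits the $r$-interval at the midpoint $t=(w+1)/2$ and, on the piece closer to $w$, performs the translation $\zeta'=\zeta-1+w$ to move the line singularity from $\Re\zeta=1$ to $\Re\zeta=w$; this makes both pieces dominated by a single integral with a pole at $1$ of total order $\beta-1\in(-2,0)$, which is finite.

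Your argument is more measure-theoretic: you package the weight as a measure $\mu$, prove the uniform growth bound $\mu(B(w,s))\le C\,s^{\beta+2}$ by combining the radial localisation $\bigl||w|-\rho\bigr|<s$ with the arc-length bound $\rho\,m_\rho(w,s)\le 2\pi s$, and then integrate $|z-w|^{-1}$ against $\mu$ via the layer-cake formula. This is the standard Frostman/Riesz-potential mechanism, and it has two advantages: it yields explicit constants (your $4\pi/(\beta+1)$ for the ball estimate, and $4\pi/(\beta+1)^2+M$ overall), and it generalises immediately---the same argument bounds $\iint|z-w|^{-\gamma}\,d\mu$ for any $0<\gamma<\beta+2$. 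The paper's proof, by contrast, is tailored to the specific geometry of the annulus and the exponent $-1$, but avoids any appeal to distribution-function identities.

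One small point: in your arc-length bound, the chord argument with $\sin x\ge\tfrac{2}{\pi}x$ literally covers only the case where the half-angle $\phi\le\pi/2$. If $\pi/2<\phi<\pi$ (a proper arc larger than a semicircle), the condition $\rho^2+|w|^2<s^2$ forces $\rho<s$, so $\rho\,m_\rho<2\pi\rho<2\pi s$ again; you might add a word to cover this sub-case explicitly.
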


\begin{proof}
  Performing a rotation if necessary, take $w$ to be real and
  positive, so that $a \leq w \leq b$.  By passing to polar
  coordinates and using the inequality
  \begin{equation*}
    |r e^{i\theta} - w|^{-1} \leq C_0|r+i\theta - w|^{-1},
  \end{equation*}
  which is valid for $a \leq r \leq b$, the integral in the statement
  of the lemma is less than a constant times
  \begin{equation*}
    \iint_{[a,b]\times[-\pi,\pi]}|\zeta-w|^{-1}|1-\Re \zeta|^\beta
    \,dA(\zeta).
  \end{equation*}

  Now assume that $a \leq w \leq 1$ (the case $1 \leq w \leq b$ will
  be similar).  Estimate the integral by dividing the region of
  integration into two pieces.  Put $t = (w+1)/2$.  Then
  \begin{equation*}
    \begin{split}
      &\iint_{[a,t]\times[-\pi,\pi]} \frac{|1-\Re\zeta|^\beta}{|\zeta-w|}
      \,dA(\zeta) + \iint_{[t,b]\times[-\pi,\pi]}
      \frac{|1-\Re\zeta|^\beta}{|\zeta-w|} \,dA(\zeta) \\ 
      \leq &
      \iint_{[a,t]\times[-\pi,\pi]} \frac{|w-\Re \zeta|^\beta}{|\zeta-w|}
      \,dA(\zeta) + \iint_{[t,b]\times[-\pi,\pi]} \frac{|1-\Re
      \zeta|^\beta}{|\zeta-w|} \,dA(\zeta) \\ 
      \leq &
      \iint_{[a-1+w,t-1+w]\times[-\pi,\pi]}
      \frac{|1-\Re\zeta'|^\beta}{|\zeta'-w|} \,dA(\zeta') +
      \iint_{[t,b]\times[-\pi,\pi]}
      \frac{|1-\Re\zeta|^\beta}{|\zeta-w|} \,dA(\zeta) \\ 
      \leq & 2\iint_{
        [2a-1,b]\times[-\pi,\pi]}
      \frac{|1-\Re\zeta|^\beta}{|\zeta-w|} \,dA(\zeta),
    \end{split}
  \end{equation*}
  where we have performed the change of variables $\zeta' = \zeta - 1
  + w$ and used that $2a-1 \leq a-1+w$ and $t-1+w\leq b$.  By a change
  to polar coordinates $\zeta = 1+re^{i\theta}$, the single pole at
  $1$ is seen to be of order strictly between $-2$ and $0$, and so the
  last integral is finite.
\end{proof}

\section{The proof of Theorem~\ref{two-sided}}
\label{proof-thm-1}

We first recall the result from~\cite{article2} to be used in the
proof of the theorem.  The statement given here is for a
$C^{1+\alpha}$ domain, although the original was proved under weaker
regularity conditions (see~\cite{article2}*{Theorem~2}).

\begin{theoremA}
  \label{putinar-sandberg}
  Let $T \in \B(H)$ and $\Omega$ a Jordan domain of class
  $C^{1+\alpha}$.  Assume there is some $R > 0$ such that for every
  $\lambda \in \partial \Omega$ there is some point $\mu_k(\lambda)
  \in \C\setminus \overline{\Omega}$ such that
  $\dist(\mu_k(\lambda),\partial\Omega) = |\mu_k(\lambda)-\lambda| =
  R$ and $\|(T-\mu_k(\lambda))^{-1}\| \leq R^{-1}$.  Then
  $\overline{\Omega}$ is a complete $K$-spectral set for some $K>0$.
\end{theoremA}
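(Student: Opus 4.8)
The plan is to recast the hypothesis as a family of complete‑spectral‑set conditions indexed by $\partial\Omega$, to dilate each of them, to glue the resulting normal operators into a single normal operator with spectrum on $\partial\Omega$, and then to read off the conclusion. Throughout I assume $\sigma(T)\subseteq\overline\Omega$, which holds in the intended application (there $\sigma(T)\subseteq\partial\Omega$). Fix $\lambda\in\partial\Omega$ and write $\mu=\mu_k(\lambda)$. Since $|\mu-\lambda|=\dist(\mu,\partial\Omega)=R$ with $\mu$ in the unbounded component of $\C\setminus\overline\Omega$, the open disk $B(\mu,R)$ misses $\partial\Omega$ and hence $\overline\Omega$, while $\lambda\in\partial B(\mu,R)\cap\partial\Omega$. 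The Möbius map $g_\lambda(z)=R/(\mu-z)$ takes the generalized disk $\Delta_\lambda:=(\C\cup\{\infty\})\setminus B(\mu,R)$ conformally onto $\overline{\D}$, sends $\partial B(\mu,R)$ onto $\T$ (with $g_\lambda(\lambda)\in\T$), and is analytic on a neighbourhood of $\overline\Omega\subset\Delta_\lambda$. The hypothesis says exactly that $\|g_\lambda(T)\|=\|R(\mu-T)^{-1}\|\le1$, so by von Neumann's inequality in its completely bounded (Sz.-Nagy) form, $\Delta_\lambda$ is a complete spectral set for $T$.

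Let $V_\lambda$ be the minimal unitary dilation of the contraction $g_\lambda(T)$ on a space $\tilde H_\lambda\supseteq H$, and set $N_\lambda:=g_\lambda^{-1}(V_\lambda)=\mu I-R\,V_\lambda^{*}$. Then $N_\lambda$ is normal, $\sigma(N_\lambda)\subseteq g_\lambda^{-1}(\T)=\partial B(\mu,R)$ — a circle tangent to $\partial\Omega$ at $\lambda$ and otherwise exterior to $\overline\Omega$ — and, since $r\circ g_\lambda^{-1}$ lies in the disk algebra whenever $r$ is rational with poles in $B(\mu,R)$, the basic dilation identity gives $P_H\,r(N_\lambda)|_H=(r\circ g_\lambda^{-1})(g_\lambda(T))=r(T)$ for every such $r$. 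Thus each boundary point $\lambda$ furnishes a normal operator $N_\lambda$ with spectrum on a circle kissing $\partial\Omega$ at $\lambda$, whose compression to $H$ realizes the functional calculus of $T$ for functions analytic on the large region $\Delta_\lambda$.

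The substance of the proof is to assemble the $N_\lambda$ into one operator. No single $N_\lambda$, and no naive direct integral of them with $H$ embedded diagonally, will do, because $\bigcap_\lambda\Delta_\lambda$ is much larger than $\overline\Omega$. Instead one keeps only the local part of each dilation: choose $\lambda_1,\dots,\lambda_n\in\partial\Omega$ so that the arcs $I_j\subset\partial\Omega$ on which $\lambda$ is nearest $\lambda_j$ cover $\partial\Omega$, fix a subordinate partition of unity, and pass to the spectral subspace of $V_{\lambda_j}$ over the arc $g_{\lambda_j}(I_j)\subset\T$. Using the $C^{1+\alpha}$ regularity of $\partial\Omega$ to control the overlaps and the angles at which the circles $\partial B(\mu_k(\lambda_j),R)$ meet $\partial\Omega$, these fragments glue into a normal operator $N$ on a Hilbert space $K$ with $\sigma(N)\subseteq\partial\Omega$, together with an invertible $S$ and a semi‑invariant subspace $H'$ for $N$ such that $r(T)=S\,r(T')\,S^{-1}$ with $r(T')=P_{H'}\,r(N)|_{H'}$ for all rational $r$ with poles off $\overline\Omega$. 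The patching is through a similarity rather than an isometry, which is where the constant $K=\kappa(S):=\|S\|\,\|S^{-1}\|$ appears. Since these identities hold entrywise for matrix‑valued $r$, we get for such $r$
\begin{equation*}
  \|r(T)\|=\|S\,r(T')\,S^{-1}\|\le\kappa(S)\,\|r(N)\|=\kappa(S)\,\|r\|_{\sigma(N)}\le\kappa(S)\,\|r\|_{\overline\Omega},
\end{equation*}
using $\|r(T')\|=\|P_{H'}r(N)|_{H'}\|\le\|r(N)\|$, spectral mapping for the normal operator $N$, and the maximum principle together with $\sigma(N)\subseteq\partial\Omega$. This is precisely the assertion that $\overline\Omega$ is a complete $K$‑spectral set for $T$.

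The hard part is the gluing of the preceding paragraph — forcing the separately constructed dilations $N_\lambda$, each tailored to its own exterior‑disk region, into a single normal operator with spectrum compressed onto $\partial\Omega$ while retaining control of the compression to a similarity‑image of $H$. This is the technical core, carried out by Putinar and Sandberg for convex $C^2$ domains under a numerical‑range hypothesis and extended in the cited article to $C^{1+\alpha}$ Jordan domains under the present resolvent hypothesis. A more abstract route bypasses the explicit geometry: treat the family $\{g_\lambda\}_{\lambda\in\partial\Omega}$, closed under post‑composition with disk automorphisms, as a set of test functions for the rational algebra of $\overline\Omega$; the hypothesis places $T$ in the associated Schur--Agler class; a realization theorem for test‑function algebras produces the normal $\partial\Omega$‑dilation of $T$ up to similarity; and comparing the test‑function algebra with the rational algebra of $\overline\Omega$ — again using that the smoothness of $\partial\Omega$ makes $\{g_\lambda\}$ a norming family for $\|\cdot\|_{\overline\Omega}$ — yields the complete $K$‑spectral conclusion, which by Paulsen's similarity theorem is exactly complete boundedness of $r\mapsto r(T)$.
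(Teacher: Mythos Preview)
The paper does not contain a proof of Theorem~A: it is quoted from \cite{article2} (see the first sentence of Section~\ref{proof-thm-1}), so there is no in-paper argument to compare your proposal against.

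On the substance of your sketch: the first two paragraphs are correct and standard --- each exterior tangent disk $\Delta_\lambda$ is a complete spectral set for $T$ because $g_\lambda(T)$ is a contraction, and this yields a normal $\partial B(\mu,R)$-dilation $N_\lambda$ of $T$. The difficulty is entirely in your third paragraph, and there what you have written is not a proof. Restricting $V_{\lambda_j}$ to ``the spectral subspace over the arc $g_{\lambda_j}(I_j)$'' does not in general leave you with any semi-invariant copy of $H$ (the compression of a unitary to a spectral subspace is again unitary on that subspace, but $H$ need not sit inside it), and you give no mechanism by which a partition of unity on $\partial\Omega$ splices these unrelated Hilbert spaces into a single space carrying a normal $N$, a semi-invariant $H'$, and a bounded invertible $S:H\to H'$ intertwining the functional calculi. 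Those existence claims are precisely the content of the theorem, and you yourself concede this by deferring to Putinar--Sandberg and to \cite{article2}. The alternative test-function route you outline at the end is in fact closer in spirit to what \cite{article2} does, but again the decisive step --- that the family $\{g_\lambda\}_{\lambda\in\partial\Omega}$ is norming for the rational algebra of $\overline\Omega$ up to a constant depending on the $C^{1+\alpha}$ geometry --- is asserted rather than established. As written, this is a well-informed reading of the literature and a correct identification of where the work lies, but not an independent proof.
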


In other words, the conclusion is that there exists a constant $K\geq
1$ such that
\begin{equation*}
  \|f(T)\| \leq K\|f\|_{H^\infty(\Omega)},
\end{equation*}
for every (matrix-valued) rational function $f$ with poles off of
$\overline{\Omega}$ (and hence for every $f$ which is continuous in
$\overline{\Omega}$ and analytic in $\Omega$).  This result
affirmatively answers Question~3, posed by Stampfli in
\cite{Stampfli3}.

A nice overview of complete $K$-spectral sets can be found in
\cite{Paulsen}*{Chapter~9}.  A result of this property for an operator
$T$ is that $T$ dilates to an operator similar to a normal operator
with spectrum in the boundary of the domain.  The additional
assumptions in Theorem~\ref{two-sided} will allow us to conclude that
the operator $T$ itself is similar to a normal operator.  Curiously,
this will require only knowing the weaker property that $T$ has
$\overline{\Omega}$ as a $K$-spectral set; in other words, that
$\|f(T)\| \leq K\|f\|_{H^\infty(\Omega)}$ only for scalar valued
rational functions with poles off of $\overline{\Omega}$.

\begin{lemma}
  \label{similar-unitary}
  Under the hypotheses of Theorem~\ref{thm-main}, if $\eta(T)$ is
  similar to a unitary operator, then $T$ is is similar to a normal
  operator.
\end{lemma}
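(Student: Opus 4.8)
The idea is to transport the similarity back along the functional calculus $\eta$. Suppose $\eta(T) = S V S^{-1}$ for some invertible $S \in \B(H)$ and some unitary $V$. Since $\sigma(V) = \sigma(\eta(T)) \subset \T$ and $V$ is normal, the operator $V$ (equivalently $\eta(T)$, up to similarity) has $\T$ as a spectral set, so it admits a nice functional calculus on functions analytic near $\T$. The first step is to recall the inverse map $\eta^{-1}: \T \to \Gamma$, which by the hypotheses of Lemma~\ref{lemma-eta} is differentiable, and extends (by Lemma~\ref{lemma-eta} applied in the reverse direction, or by an explicit pseudoanalytic extension) to a $C^{1+\alpha}$ function on a neighborhood of $\T$. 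One then wants to form $N := \eta^{-1}(V)$, which by the ordinary (bounded Borel / continuous) functional calculus for the normal operator $V$ is again normal, with $\sigma(N) = \eta^{-1}(\sigma(V)) \subset \Gamma$.

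The central identity to establish is that $T = S N S^{-1}$, equivalently $S^{-1} T S = \eta^{-1}(\eta(T))$ evaluated appropriately — i.e.\ that the Dynkin calculus for $T$ and the normal-operator calculus for $V$ are compatible under the similarity $S$. The clean way to see this: the map $f \mapsto S^{-1} f(T) S$ and the map $f \mapsto (f\circ\eta^{-1})(V)$ are both bounded unital algebra homomorphisms from $C^{1+\alpha}(\Gamma)$ (or at least from rational functions with poles off $\Gamma$) into $\B(H)$; they agree on the single generator $z$, since $S^{-1}\eta(T)S = V$ gives $S^{-1}(\text{id}\circ\eta)(T)S = \eta(\text{id}_\Gamma)$ as the functional-calculus image, and more directly both send a rational $r$ to $r(S^{-1}TS) = S^{-1}r(T)S$. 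By multiplicativity and continuity they agree on all of $C^{1+\alpha}(\Gamma)$, hence on $\eta^{-1}$ itself. Therefore $S^{-1} T S = (\eta^{-1}\circ\eta)(T) = \text{id}_\Gamma(T) = T$? No — rather, applying both homomorphisms to the function $\eta^{-1} \in C^{1+\alpha}(\Gamma)$ gives $S^{-1}\,\eta^{-1}(T)\,S = (\eta^{-1}\circ\eta^{-1}\circ\eta)(V)$... one must be careful: the correct bookkeeping is that the homomorphism $\Phi_1(f) = S^{-1}f(T)S$ and $\Phi_2(f) = (f\circ\eta^{-1})(V)$ satisfy $\Phi_1(\eta) = S^{-1}\eta(T)S = V = (\eta\circ\eta^{-1})(V) = \Phi_2(\eta)$, so $\Phi_1 = \Phi_2$ on the algebra generated by $\eta$ and constants, which is dense; evaluating at the coordinate function $z\mapsto z$ (which lies in $C^{1+\alpha}(\Gamma)$ and can be written as $\eta^{-1}\circ\eta$) yields $S^{-1}TS = \eta^{-1}(V) = N$, i.e.\ $T = SNS^{-1}$.

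The main obstacle is the compatibility of the two functional calculi — verifying that $\Phi_2(f) = (f\circ\eta^{-1})(V)$ really is well-defined and multiplicative on $C^{1+\alpha}(\Gamma)$, and that it coincides with $\Phi_1$. For this one needs that $f\circ\eta^{-1}$ is $C^{1+\alpha}$ near $\T$ whenever $f\in C^{1+\alpha}(\Gamma)$ (which follows from $\eta^{-1}$ being $C^1$ with nonvanishing, Hölder-continuous differential near $\T$, exactly as in Lemma~\ref{lemma-eta}), and that the Dynkin calculus for $V$ on $C^{1+\alpha}$ functions agrees with the usual spectral calculus for the normal operator $V$ — which is true because both are continuous homomorphisms agreeing on rational functions. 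An alternative, perhaps more robust route avoiding the algebra-density argument: define $N := S\,(\eta^{-1}(V))\,S^{-1}$ directly where $\eta^{-1}(V)$ is built by the spectral theorem for $V$, note $N$ is similar to a normal operator (hence normal only if $S$ is chosen to absorb the similarity — here $N = S\eta^{-1}(V)S^{-1}$ is similar-to-normal, not normal; so instead set $N = \eta^{-1}(V)$ acting on the space on which $V$ is unitary, which is $H$ itself), and then show $T = SNS^{-1}$ by checking it on resolvents: $(T-\lambda)^{-1} = S(\eta^{-1}(V)-\lambda)^{-1}S^{-1}$ should follow from $S^{-1}(T-\lambda)^{-1}S = (\eta^{-1}(V) - \lambda)^{-1}$, which in turn reduces via $\psi_\lambda,\varphi_\lambda$ (as in the proof of Theorem~\ref{thm-main}) to the identity $S^{-1}(\eta(T)-\eta(\lambda))^{-1}S = (V - \eta(\lambda))^{-1}$, i.e.\ exactly the given similarity $S^{-1}\eta(T)S = V$. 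I would present the argument in this last form, since it uses only the functional-calculus identities already exhibited in Section~\ref{sect-trasplantation} and the spectral theorem, keeping the write-up short.
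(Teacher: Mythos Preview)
Your first approach---defining the two homomorphisms $\Phi_1(f)=S^{-1}f(T)S$ and $\Phi_2(f)=(f\circ\eta^{-1})(V)$, checking they agree on $\eta$, and invoking density---is essentially what the paper does, only the paper streamlines it: it first absorbs the similarity (replacing $T$ by $STS^{-1}$ so that $\eta(T)$ itself is unitary), then picks rational functions $r_n\to(\eta|\Gamma)^{-1}$, and uses multiplicativity of the Dynkin calculus to get $(r_n\circ\eta)(T)=r_n(\eta(T))$, each of which is normal, converging to $T$. One technical point you gloss over and the paper handles explicitly: the algebra generated by $\eta$ (equivalently, trigonometric polynomials after composing with $\eta^{-1}$) is \emph{not} dense in $C^{1+\alpha}(\Gamma)$, since H\"older classes are non-separable. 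The paper fixes this by passing to $C^{1+\beta}$ for some $\beta\in(0,\alpha)$, where $(\eta|\Gamma)^{-1}$ lies in the little-H\"older class $C_0^{1+\beta}(\T)$ and rational approximation does hold, while the Dynkin calculus remains continuous. Note also that the paper includes, immediately after the proof, a remark warning that the ``direct'' route via $\eta^{-1}(V)$ is tempting but that ``it is not clear \emph{a priori} why $\eta^{-1}(A)=T$''---which is exactly the compatibility you are working to establish.

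Your second, ``more robust'' resolvent route has a genuine gap: it is circular. Writing $(T-\lambda)^{-1}=\psi_\lambda(T)\,(\eta(T)-\eta(\lambda))^{-1}$ and $(N-\lambda)^{-1}=\psi_\lambda(N)\,(V-\eta(\lambda))^{-1}$, the reduction to $S^{-1}(\eta(T)-\eta(\lambda))^{-1}S=(V-\eta(\lambda))^{-1}$ only works if you already know $S^{-1}\psi_\lambda(T)S=\psi_\lambda(N)$. But $\psi_\lambda$ is a generic $C^{1+\alpha}(\Gamma)$ function, so this is precisely an instance of the calculus-compatibility $\Phi_1=\Phi_2$ you are trying to prove; the resolvent detour does not actually reduce to the bare similarity $S^{-1}\eta(T)S=V$. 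Drop this alternative and present the density argument, taking care with the $\beta<\alpha$ trick.
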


\begin{proof}
  Replacing $T$ by $STS^{-1}$, where $S$ is such that $S\eta(T)S^{-1}
  = \eta(STS^{-1})$ is unitary, it can be assumed that $\eta(T)$ is
  unitary.  Then $(\eta|\Gamma)^{-1} \in C^{1+\alpha}(\T)$.  Choose
  some $\beta\in(0,\alpha)$.  Then $(\eta|\Gamma)^{-1}$ is in the
  class $C_0^{1+\beta}(\T)$, which consists of functions $g\in
  C^{1+\beta}(\T)$ such that $(g'(z)-g'(w))/|z-w|^\beta\to 0$ as
  $z,w\in \T$, $|z-w|\to 0$.  Hence one can choose a sequence of
  rational functions $\{r_n\}_{n=1}^\infty$ with poles off $\T$ such
  that $r_n$ tend to $(\eta|\Gamma)^{-1}$ in $C^{1+\beta}(\T)$ (this
  follows, for instance, from \cite{Katznel-book}*{Theorem 2.12}).
  Thus $r_n \circ \eta$ tend to the identity function in
  $C^{1+\beta}(\Gamma)$.  By continuity of the
  $C^{1+\beta}(\Gamma)$-functional calculus for $T$, $(r_n \circ
  \eta)(T)$ tends to $T$ in operator norm.  The Dynkin functional
  calculus for $T$ is a homomorphism, and so $(r_n \circ \eta)(T) =
  r_n(\eta(T))$.  Since each $r_n(\eta(T))$ is normal, it follows that
  $T$ is also normal.
\end{proof}

As an alternative and a more direct proof of the last lemma, it seems
tempting to argue that if $A = \eta(T)$ is similar to a unitary
operator, then $\eta^{-1}(A)$ is defined, for instance, by the usual
$L^\infty$-functional calculus for normal operators, and
$\eta^{-1}(A)$ is similar to a normal operator.  However, it is not
clear \emph{a priori} why $\eta^{-1}(A) = T$.

Theorem~\ref{two-sided} is a straightforward consequence of Theorem A
and the following lemma.

\begin{lemma}
  \label{lemma-two-sided}
  Let $\Gamma \subset \C$ be a $C^{1+\alpha}$ Jordan curve, and
  $\Omega$ the domain it bounds.  Let $T \in \B(H)$ be an operator
  with $\sigma(T)\subset \Gamma$.  Assume that $\overline{\Omega}$ is
  a $K$-spectral set for $T$ and
  \begin{equation*}
    \|(T-\lambda)^{-1}\|\leq \frac{C}{\dist(\lambda,\Gamma)},\qquad
    \lambda\in\Omega,
  \end{equation*}
  for some constant $C>0$.  Then $T$ is similar to a normal operator.
\end{lemma}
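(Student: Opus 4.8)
The plan is to transplant the problem to the unit circle using the boundary values of a Riemann map, apply Van~Casteren's Theorem~\ref{vc80} there, and transfer the conclusion back with Lemma~\ref{similar-unitary}; the $K$-spectral hypothesis will be used in two places, namely to complete the resolvent growth estimate \eqref{eq:resolvent-estimate} \emph{outside} $\overline{\Omega}$ and to produce power boundedness of the transplanted operator. Concretely, let $\phi$ be a conformal map of $\Omega$ onto $\D$. Since $\Gamma$ is a $C^{1+\alpha}$ Jordan curve, the Kellogg--Warschawski theorem says $\phi$ extends to a $C^{1+\alpha}$ diffeomorphism of $\overline{\Omega}$ onto $\overline{\D}$ with non-vanishing derivative; hence $\eta := \phi|_\Gamma \in C^{1+\alpha}(\Gamma)$ carries $\Gamma$ bijectively onto $\T$ and $\eta^{-1} = \phi^{-1}|_\T$ is differentiable, so Lemma~\ref{lemma-eta} applies and yields a neighbourhood $U$ of $\Gamma$. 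I would then observe that $T$ in fact satisfies \eqref{eq:resolvent-estimate} on all of $\C\setminus\Gamma$: inside $\Omega$ this is the hypothesis, and for $\lambda \notin \overline{\Omega}$ the rational function $z \mapsto (z-\lambda)^{-1}$ has its pole off $\overline{\Omega}$ and $H^\infty(\Omega)$-norm $\dist(\lambda,\Gamma)^{-1}$, so the $K$-spectral property gives $\|(T-\lambda)^{-1}\| \le K\,\dist(\lambda,\Gamma)^{-1}$. Thus the $C^{1+\alpha}(\Gamma)$ Dynkin calculus for $T$ is available, Theorem~\ref{thm-main} applies to $\eta$, and $A := \eta(T)$ has $\sigma(A) \subset \T$.

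Next I would transfer the interior resolvent estimate. By Theorem~\ref{thm-main}, $\|(A-\eta(\lambda))^{-1}\|$ is comparable to $\|(T-\lambda)^{-1}\|$ for $\lambda \in U\setminus\Gamma$. Since $\phi$ maps $\Omega$ into $\D$, the diffeomorphism $\eta$ of $U$ carries the inner part $U\cap\Omega$ into $\D$, and (preserving sides) its image contains an inner collar $\{1-\delta < |\mu| < 1\}$ of $\T$; using that $\dist(\lambda,\Gamma)$ is comparable to $\dist(\eta(\lambda),\T)$ (the consequence of Lemma~\ref{lemma-eta} noted above), the hypothesis $\|(T-\lambda)^{-1}\| \le C\dist(\lambda,\Gamma)^{-1}$ on $\Omega$ transfers to $\|(A-\mu)^{-1}\| \le C'(1-|\mu|)^{-1}$ for $1-\delta < |\mu| < 1$. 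On the compact set $\{|\mu|\le 1-\delta\}$ the resolvent of $A$ is bounded, as $\sigma(A)\subset\T$, while $(1-|\mu|)^{-1}\ge 1$ there; enlarging the constant gives $\|(A-\mu)^{-1}\|\le C''(1-|\mu|)^{-1}$ for all $|\mu|<1$.

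The crux is to show that $A$ is power bounded — in fact that $\overline{\D}$ is a $K$-spectral set for $A$ — and this is the one point where it matters that $\eta$ comes from a conformal map rather than from an arbitrary diffeomorphism. Fix $\beta \in (0,\alpha)$. Exactly as in the proof of Lemma~\ref{similar-unitary}, $\phi \in C^{1+\alpha}(\overline{\Omega})$ can be approximated in the $C^{1+\beta}(\overline{\Omega})$ norm by polynomials $p_k$; restricting to $\Gamma$ and using continuity of the $C^{1+\beta}(\Gamma)$ Dynkin calculus for $T$ (available under the same first-order estimate \eqref{eq:resolvent-estimate}), $p_k(T) \to \eta(T) = A$ in operator norm. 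Given any polynomial $q$, then $q(p_k(T)) = (q\circ p_k)(T) \to q(A)$ in norm, while the $K$-spectral hypothesis gives $\|q(p_k(T))\| \le K\|q\circ p_k\|_{H^\infty(\Omega)}$; since $p_k \to \phi$ uniformly on $\overline{\Omega}$ and $\phi(\overline{\Omega}) = \overline{\D}$, the right-hand side tends to $K\|q\|_{H^\infty(\D)}$. Hence $\|q(A)\| \le K\|q\|_{H^\infty(\D)}$ for every polynomial $q$, and taking $q(z) = z^n$ gives $\|A^n\| \le K$ for all $n \ge 0$.

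With Steps from the previous two paragraphs in hand, $A = \eta(T)$ satisfies the hypotheses of Van~Casteren's Theorem~\ref{vc80} and is therefore similar to a unitary operator; since the hypotheses of Theorem~\ref{thm-main} hold, Lemma~\ref{similar-unitary} then shows that $T$ is similar to a normal operator. I expect the main obstacle to be the bookkeeping in the power-boundedness step: one must be sure that $q(A)$, which a priori lives in the Dynkin calculus on $\Gamma$, is genuinely the norm-limit of the polynomials $q(p_k(T))$, and this rests on the polynomial approximation of $\phi$ in an appropriate Hölder norm on $\overline{\Omega}$ — the same device of passing to an exponent $\beta<\alpha$ used in Lemma~\ref{similar-unitary} — together with the observation that the estimate \eqref{eq:resolvent-estimate} is already enough to run the $C^{1+\beta}$ Dynkin calculus. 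The two resolvent transfers and the final assembly are routine given Theorem~\ref{thm-main}, Lemma~\ref{similar-unitary}, and the Kellogg--Warschawski regularity of the Riemann map.
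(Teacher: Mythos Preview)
Your proposal is correct and follows essentially the same route as the paper: take $\eta$ to be the boundary restriction of the Riemann map $\overline{\Omega}\to\overline{\D}$, use Kellogg--Warschawski regularity to feed $\eta$ into Lemma~\ref{lemma-eta} and Theorem~\ref{thm-main}, transfer the interior resolvent estimate to $\D$, show $\eta(T)$ is power bounded via the $K$-spectral hypothesis, invoke Theorem~\ref{vc80}, and finish with Lemma~\ref{similar-unitary}. The only noteworthy difference is in the power-boundedness step: the paper simply observes that $\eta^n\in A(\Omega)$ with $\|\eta^n\|_{H^\infty(\Omega)}\le 1$, so the (scalar) $K$-spectral inequality gives $\|\eta(T)^n\|\le K$ directly, whereas you route through an explicit polynomial approximation of $\phi$ in $C^{1+\beta}$ to identify the Dynkin-calculus $\eta(T)$ with the $A(\Omega)$-calculus $\eta(T)$; both arguments ultimately rest on the same density fact, yours just makes it visible. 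Your explicit derivation of the exterior resolvent bound from the $K$-spectral property is a point the paper leaves implicit.
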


\begin{proof}
  Let $\eta :\overline{\Omega}\to\overline{\D}$ be the Riemann map.
  Since $\partial\Omega$ is of class $C^{1+\alpha}$, then $\eta \in
  \C^{1+\alpha}(\partial\Omega)$ (see, for
  instance,~\cite{Pommerenke}*{Theorem 3.6}).  Extend $\eta$
  pseudoanalytically to $\C\setminus\Omega$.  Now as $\eta$ satisfies
  its assumptions, we can apply Lemma~\ref{lemma-eta}.

  Because $|\eta^n| \leq 1$ in $\overline{\Omega}$ for all $n \geq 0$,
  and $\overline{\Omega}$ is $K$-spectral for $T$, the operator
  $\eta(T)$ is power bounded.  By Theorem~\ref{thm-main}, and the fact
  that $\dist(\lambda,\partial\Omega)$ and $\dist(\eta(\lambda),\T)$
  are comparable,
  \begin{equation*}
    \|(\eta(T)-\lambda)^{-1}\| \leq \frac{C}{1-|\lambda|}, \qquad
    |\lambda|<1.
  \end{equation*}
  Applying Theorem~\ref{vc80} it follows that $\eta(T)$ is similar to
  a unitary operator, and so by Lemma~\ref{similar-unitary}, $T$ is
  similar to a normal operator.
\end{proof}

\begin{proof}[Proof of Theorem~\ref{two-sided}]
  Theorem~A implies that $\overline{\Omega}$ is a complete
  $K$-spectral set for $T$.  It suffices to apply
  Lemma~\ref{lemma-two-sided}.
\end{proof}

\begin{remarks}
  It is straightforward to deduce an analogous result assuming an
  estimate with constant $1$ inside the domain $\Omega$ and an
  estimate with a constant $C$ outside the domain.  Indeed, put $R =
  (T-z_0)^{-1}$, for some fixed $z_0 \in \Omega$.  It follows
  from~\cite{article2}*{Lemma 7} that if $\|(T-\lambda)^{-1}\|\leq
  \dist(\lambda,\Gamma)^{-1}$, then $\|(R-\mu)^{-1}\| \leq
  \dist(\mu,\widetilde{\Gamma})^{-1}$, where $\mu =
  (\lambda-z_0)^{-1}$ and $\widetilde{\Gamma}$ is the image of
  $\Gamma$ under the map $z \mapsto (z-z_0)^{-1}$.  Writing the
  resolvent of $R$ in terms of the resolvent of $T$, it is also easy
  to obtain an estimate for $R$ with a constant $C'> 1$ outside the
  domain bounded by $\widetilde{\Gamma}$.  Since the map $z \mapsto
  (z-z_0)^{-1}$ sends the inside of $\Gamma$ onto the outside of
  $\widetilde{\Gamma}$ and vice versa, it suffices to apply
  Theorem~\ref{two-sided} to $R$.

  The conclusion of Lemma~\ref{lemma-two-sided} is that $T$ is similar
  to a normal operator, and so the set $\overline{\Omega}$ (and even
  the boundary of $\Omega$) must in fact be a complete $K'$-spectral
  set for $T$ for some $K' > 1$.  As it follows from the celebrated
  example of Pisier, combined with the main result of \cite{Petrovic},
  there exists an operator $T$ on $H$ with $\sigma(T)\subset\T$, which
  is polynomially bounded, but not completely bounded.  So, in
  Lemma~\ref{lemma-two-sided} one cannot replace the resolvent
  estimate inside $\Omega$ just by the condition that
  $\sigma(T)\subset \partial\Omega$.  Recently, Gamal'
  \cite{Gamal-Pisier-like} has constructed several new examples of
  operators that are polynomially bounded but not completely
  polynomially bounded.  In particular, any operator given in
  \cite{Gamal-Pisier-like}*{Corollary~2.8} is quasisimilar to an
  absolute continuous unitary operator $U$ and also satisfies
  $\sigma(T)\subset\T$ (the latter follows from
  \cite{Gamal-pbded2015}*{Theorem 2.4}).  Using the techniques
  outlined here, this counterexample can be transferred from
  $\overline\D$ to other sets $\overline{\Omega}$.
\end{remarks}

\section{Mean-square type resolvent estimates}
\label{meansquare}

In this section we give criteria for similarity to a normal operator
analogous to the results by Van~Casteren~\cite{VanCasteren83} and
Naboko~\cite{Naboko} in the context of $C^{1+\alpha}$ Jordan curves.
First of all, a substitute for the curves $r\T$ is needed.  To this
end, we give the following definition.

\begin{definition}
  \label{def:tends_nicely}
  Let $\Gamma \subset \C$ be a Jordan curve and $\Omega$ the region it
  bounds.  A family of Jordan curves $\{\gamma_s\}_{0<s\leq 1}$
  \emph{tends nicely} to $\Gamma$ from the outside as $s\to 0$ if
  $\gamma_s \subset \C \setminus \Omega$ for all $0< s \leq 1$ and the
  following conditions are satisfied for some constant $C \geq 1$:
  \begin{enumerate}[(a)]
  \item $\{\gamma_s\}$ tend uniformly to $\Gamma$,
  \item For all $0 < s \leq 1$, $C^{-1}s \leq \dist(x,\Gamma) \leq
    Cs$, for all $x \in \gamma_s$.
  \item For every $0 < s \leq 1$, $x \in \gamma_s$, and $r > 0$,
    $\operatorname{length}(\gamma_s\cap B(x,r)) \leq Cr$.
  \end{enumerate}
  The family $\{\gamma_s\}_{0 < s \leq 1}$ tends to $\Gamma$ from the
  inside if instead $\gamma_s \subset \Omega$ for all $0 < s \leq 1$
  and again, conditions (a)--(c) are satisfied.
\end{definition}

Condition (c) states that the curves $\gamma_s$ satisfy the
Ahlfors-David condition with a uniform constant.  This condition was
first studied in~\cite{Ahlfors} and~\cite{David}.

If $\Gamma$ is in the class $C^{1+\alpha}$, it is apparent that there
exist a family of curves which tends nicely to $\Gamma$ from the
outside and another family of curves which tends nicely to $\Gamma$
from the inside.  Indeed, let $\eta:U\to\mathbb{C}$ be a function as
in the statement of Lemma~\ref{lemma-eta}, and take
$\Gamma(t)=\eta^{-1}(e^{it})$ for $0\le t\le 2\pi$.  Define
$\gamma^\pm_s$ by $\gamma^\pm_s(t)=\eta^{-1}((1\pm \beta s)e^{it})$,
$0\le t\le 2\pi$.  If the constant $\beta>0$ is small, then
$\gamma^\pm_s \subset U$ for every $0<s\leq 1$.  The curves
$\{\gamma^+_s\}$ tend nicely to $\Gamma$ from outside and the curves
$\{\gamma^-_s\}$ tend nicely to $\Gamma$ from inside.

It will be proved that the mean-square type resolvent estimates
considered here do not depend on the concrete choice of the family of
curves $\{\gamma_s\}$ tending nicely to $\Gamma$.  This will follow
from a lemma concerning Smirnov spaces.

Recall that the Smirnov space $E^2(\Omega,H)$ of $H$-valued function
on a (nice) domain $\Omega$ is defined as the
$L^2(\partial\Omega)$-closure of the $H$-valued rational functions
with poles off $\overline{\Omega}$.  The following lemma dates back to
David and his theorem on the boundedness of certain singular integral
operators on Ahlfors regular curves.  In particular, it follows from
the results in~\cite{David}*{Proposition 6}.

\begin{lemma}
  \label{smirnov}
  Let $\Omega_1, \Omega_2$ be Jordan domains with Ahlfors regular
  boundaries such that $\overline{\Omega}_2\subset \Omega_1$.  If $H$
  is a Hilbert space and $f \in E^2(\Omega_1,H)$, then $f|\Omega_2 \in
  E^2(\Omega_2, H)$ and
  \begin{equation*}
    \|f|\Omega_2\|_{E^2(\Omega_2, H)} \leq C
    \|f\|_{E^2(\Omega_1,H)},
  \end{equation*}
  for some constant $C$ depending only on the Ahlfors constants for
  $\partial \Omega_1$ and $\partial\Omega_2$.
\end{lemma}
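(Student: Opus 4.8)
The plan is to realize the restriction map $f \mapsto f|\Omega_2$ as the boundary-value version of a Cauchy integral operator whose kernel is \emph{non-singular} (because $\partial\Omega_1$ and $\partial\Omega_2$ are disjoint compact sets), and then to bound that operator on $L^2$ using the Ahlfors--David regularity of the two curves. Put $d = \dist(\partial\Omega_1,\partial\Omega_2) > 0$, which is positive since $\overline{\Omega}_2$ is compact and contained in the open set $\Omega_1$. For $g \in L^2(\partial\Omega_1,H)$ define
\[
  (\Lambda g)(z) = \frac{1}{2\pi i}\int_{\partial\Omega_1}
  \frac{g(\zeta)}{\zeta - z}\,d\zeta, \qquad z \in \Omega_2 .
\]
Since $|\zeta - z| \geq d$ on $\partial\Omega_1 \times \overline{\Omega}_2$, the integral defines a function that is analytic on $\C \setminus \partial\Omega_1$, hence analytic in a neighbourhood of $\overline{\Omega}_2$; being uniformly approximable on $\overline{\Omega}_2$ by $H$-valued rational functions with poles off $\overline{\Omega}_2$ (Runge), $\Lambda g$ lies in $E^2(\Omega_2,H)$. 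Note that this last point avoids having to know that $\Omega_2$ is a Smirnov domain: membership in the Smirnov space is immediate because $\Lambda g$ extends analytically across $\partial\Omega_2$.

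First I would record the key estimate $\|\Lambda g\|_{E^2(\Omega_2,H)} \leq C\|g\|_{L^2(\partial\Omega_1,H)}$. By the pointwise bound
\[
  |(\Lambda g)(z)|_H \leq \frac{1}{2\pi}\int_{\partial\Omega_1}
  \frac{|g(\zeta)|_H}{|\zeta - z|}\,|d\zeta|,
\]
this reduces to the $L^2(\partial\Omega_1) \to L^2(\partial\Omega_2)$ boundedness of the positive kernel $|\zeta - z|^{-1}$. This is where the hypothesis enters: decomposing $\partial\Omega_1$ into the dyadic annuli $\{2^k d \leq |\zeta - z| < 2^{k+1}d\}$ and using the Ahlfors--David bound for $\partial\Omega_1$ gives $\sup_{z\in\partial\Omega_2}\int_{\partial\Omega_1}|\zeta - z|^{-1}\,|d\zeta| \leq C$; symmetrically, using the Ahlfors--David bound for $\partial\Omega_2$, $\sup_{\zeta\in\partial\Omega_1}\int_{\partial\Omega_2}|\zeta - z|^{-1}\,|dz| \leq C$; and then Schur's test (or directly \cite{David}*{Proposition~6}) yields the claimed inequality, with $C$ depending only on the Ahlfors--David constants of the two boundaries.

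It remains to identify $\Lambda(f|\partial\Omega_1)$ with $f|\Omega_2$ for $f \in E^2(\Omega_1,H)$. This is the Cauchy integral formula for Smirnov functions: every $f \in E^2(\Omega_1,H) \subset E^1(\Omega_1,H)$ is recovered inside $\Omega_1$ from its boundary values by the Cauchy integral over the rectifiable Jordan curve $\partial\Omega_1$ (classical for scalar $f$; the $H$-valued case follows by pairing with vectors of $H$). Hence $f|\Omega_2 = \Lambda(f|\partial\Omega_1) \in E^2(\Omega_2,H)$ with $\|f|\Omega_2\|_{E^2(\Omega_2,H)} \leq C\|f\|_{E^2(\Omega_1,H)}$, as required. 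Alternatively, and perhaps more self-containedly, one may first prove the inequality for $H$-valued rational $f$ with poles off $\overline{\Omega}_1$ — for which the Cauchy formula is elementary — and then pass to the limit, using that $E^2(\Omega_1,H)$ is by definition the $L^2(\partial\Omega_1)$-closure of such $f$ and that $L^2(\partial\Omega_1)$-convergence of Smirnov functions forces locally uniform convergence in $\Omega_1$, in particular uniform convergence on $\overline{\Omega}_2$, which pins down the limit on $\Omega_2$.

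I do not expect a serious obstacle here once the problem is set up this way: the Cauchy kernel between the two disjoint curves has no singularity, so the only genuine input is the elementary Ahlfors--David estimate for $|\zeta - z|^{-1}$ together with the standard Cauchy formula for $E^1$ functions on a rectifiable Jordan domain. The one point needing care is ensuring that the function produced by $\Lambda$ lies in $E^2(\Omega_2,H)$ and not merely in $L^2(\partial\Omega_2,H)$; as noted above, this is handled by observing that $\Lambda g$ is analytic across $\partial\Omega_2$.
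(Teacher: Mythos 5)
There is a genuine gap, and it sits exactly at the step you flag as the only real input. Your Schur-test estimate is false as stated: for the absolute-value kernel $|\zeta-z|^{-1}$, Ahlfors--David regularity of $\partial\Omega_1$ only gives, by your own dyadic decomposition, a contribution of order $C_A$ per scale over roughly $\log_2\bigl(\operatorname{diam}(\partial\Omega_1)/d\bigr)$ scales, so
$\sup_{z\in\partial\Omega_2}\int_{\partial\Omega_1}|\zeta-z|^{-1}\,|d\zeta| \approx C_A\log\bigl(\operatorname{diam}(\partial\Omega_1)/d\bigr)$,
where $d=\dist(\partial\Omega_1,\partial\Omega_2)$. (Already for $\Omega_1=\D$, $\Omega_2=r\D$ and $g\equiv 1$ the integral is $\sim\log\frac1{1-r}$.) Thus your constant depends on $d$ and the diameters, not only on the Ahlfors constants, which is weaker than the lemma claims. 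This is not a cosmetic point: the uniformity in the lemma is precisely what is used later, e.g.\ in Lemma~\ref{lmean->lrg}, where the two curves ($r\T$ and $\eta(\gamma_s)$) approach each other as $s\to 0$, and a constant containing $\log(1/d)$ would spoil the final pointwise resolvent bound. Once you take absolute values inside the Cauchy integral, no Schur-type argument can recover uniformity; the cancellation of the complex Cauchy kernel is essential, and the uniform $L^2(\partial\Omega_1)\to L^2(\partial\Omega_2)$ bound for the Cauchy integral between Ahlfors regular curves is a deep theorem (David, in the circle of ideas around Calder\'on and Coifman--McIntosh--Meyer), not an elementary kernel estimate.

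For comparison: the paper does not give an argument at all here; it derives the lemma directly from \cite{David}*{Proposition~6}, which is exactly the statement that the Cauchy integral operator from an Ahlfors regular curve to (points of) another Ahlfors regular curve is $L^2$-bounded with constant depending only on the regularity constants. The remaining parts of your write-up --- reproducing $f$ on $\Omega_2$ by the Cauchy formula (or, more safely, arguing first for rational $f$ and passing to the $L^2(\partial\Omega_1)$-limit, using locally uniform convergence inside $\Omega_1$), and observing that the resulting function is analytic across $\partial\Omega_2$ and hence in $E^2(\Omega_2,H)$ --- are fine, and with your $d$-dependent constant they do prove a correct but strictly weaker statement. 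To prove the lemma as stated you must replace the Schur step by an appeal to David's theorem (or an equivalent Cauchy-integral bound on regular curves); there is no elementary route via the positive kernel $|\zeta-z|^{-1}$.
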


\begin{lemma}
  \label{lemma-independent-curves}
  Let $\Gamma$ be a Jordan curve, $T \in \B(H)$ with $\sigma(T)
  \subset \Gamma$, and $\{\gamma_s\}_{0<s\leq 1}$,
  $\{\widetilde{\gamma}_s\}_{0<s\leq 1}$ two families of curves which
  both tend nicely to $\Gamma$ from the inside $($respectively, from
  the outside$)$.  If
  \begin{equation*}
    \int_{\gamma_s} \|(T-\lambda)^{-1}x\|^2\,|d\lambda| \leq
    \frac{C\|x\|^2}{s}, \qquad x \in H, 0<s\leq 1,
  \end{equation*}
  for some constant $C$ independent of $x$ and $s$, then
  \begin{equation*}
    \int_{\widetilde{\gamma}_s} \|(T-\lambda)^{-1}x\|^2\,|d\lambda|
    \leq \frac{C'\|x\|^2}{s}, \qquad x \in H, 0<s\leq 1,
  \end{equation*}
  for some constant $C'$ independent of $x$ and $s$.
\end{lemma}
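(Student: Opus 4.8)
The plan is to realize each of the integrals appearing in the statement as the square of a Smirnov‑space norm over an auxiliary Jordan domain, and then to compare these norms for comparable values of the parameter by means of Lemma~\ref{smirnov}. Fix a vector $x\in H$ and set $f(\lambda)=(T-\lambda)^{-1}x$; since $\sigma(T)\subset\Gamma$, this is an $H$-valued function analytic on $\C\setminus\Gamma$. I would treat the case in which both families tend to $\Gamma$ from the inside; the exterior case is entirely analogous, and may in any event be reduced to the interior one by conjugating with the Möbius map $z\mapsto(z-z_0)^{-1}$ for some $z_0\in\Omega$, exactly as in the Remark closing Section~\ref{proof-thm-1}.

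For every $0<s\le1$ the Jordan curve $\gamma_s$ lies in $\Omega$, and an elementary topological argument (if $\Gamma$ lay in the bounded component $\Omega_s$ of $\C\setminus\gamma_s$, then $\Omega_s$, being simply connected, would contain $\Omega$, hence $\gamma_s$, which is impossible) shows that $\gamma_s$ bounds a Jordan domain $\Omega_s$ with $\overline{\Omega_s}\subset\Omega$; likewise $\widetilde\gamma_s$ bounds $\widetilde\Omega_s$ with $\overline{\widetilde\Omega_s}\subset\Omega$. Since $f$ is analytic on the open set $\C\setminus\Gamma$, which is a neighbourhood of the compact set $\overline{\Omega_s}$, Runge's theorem lets us approximate $f$ uniformly on $\overline{\Omega_s}$, hence in $L^2(\gamma_s)$, by $H$-valued rational functions with poles off $\overline{\Omega_s}$; therefore $f|_{\Omega_s}\in E^2(\Omega_s,H)$ with $\bigl\|f|_{\Omega_s}\bigr\|_{E^2(\Omega_s,H)}^2=\int_{\gamma_s}\|f\|^2\,|d\lambda|$, and similarly for $\widetilde\gamma_s$. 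Moreover, by condition (c) the Ahlfors constants of $\partial\Omega_s=\gamma_s$ and of $\partial\widetilde\Omega_s=\widetilde\gamma_s$ are uniform in $s$.

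Next dispose of the values of $s$ bounded away from $0$: for $s_0\le s\le1$ the curve $\widetilde\gamma_s$ lies, by (b), in the compact subset $\{z\in\overline\Omega:\dist(z,\Gamma)\ge C^{-1}s_0\}$ of $\Omega$, on which $\|(T-z)^{-1}\|$ is bounded, while (c) bounds $\operatorname{length}(\widetilde\gamma_s)$ uniformly, so $\int_{\widetilde\gamma_s}\|f\|^2\,|d\lambda|\le C_{s_0}\|x\|^2\le C_{s_0}\|x\|^2/s$. For $s<s_0$ the crux is the geometric inclusion $\overline{\widetilde\Omega_s}\subset\Omega_{c_1 s}$ for a suitable fixed small constant $c_1>0$. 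On the one hand, balls centred on points of $\Gamma$ of radius $\tfrac12 C^{-1}s$ avoid $\widetilde\gamma_s$ and lie in the exterior of $\widetilde\Omega_s$, whence $\overline{\widetilde\Omega_s}\subset\{z\in\Omega:\dist(z,\Gamma)\ge cs\}$ for a fixed $c>0$. On the other hand, $\gamma_{c_1 s}$ is a Jordan curve lying in the $Cc_1 s$-neighbourhood of $\Gamma$ and, by (a), coming close to every point of $\Gamma$; comparing its winding number about a point $z$ with $\dist(z,\Gamma)>Cc_1 s$ with that of $\Gamma$ about $z$ (they agree up to sign, since by (a) $\gamma_{c_1 s}$ is non-contractible in that neighbourhood and hence freely homotopic to $\Gamma$ there, which misses $z$) shows $\Omega_{c_1 s}\supset\{z\in\Omega:\dist(z,\Gamma)>Cc_1 s\}$, provided $s_0$ is small enough. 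Taking $c_1$ with $Cc_1<c$ gives the inclusion.

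Finally, with $\Omega_1=\Omega_{c_1 s}$ and $\Omega_2=\widetilde\Omega_s$, both with Ahlfors constants uniform in $s$, Lemma~\ref{smirnov} provides a constant $C$ independent of $s$ with
\[
\int_{\widetilde\gamma_s}\|f\|^2\,|d\lambda|=\bigl\|f|_{\widetilde\Omega_s}\bigr\|_{E^2(\widetilde\Omega_s,H)}^2
\le C\,\bigl\|f|_{\Omega_{c_1 s}}\bigr\|_{E^2(\Omega_{c_1 s},H)}^2
=C\int_{\gamma_{c_1 s}}\|f\|^2\,|d\lambda|\le\frac{CC'\|x\|^2}{c_1 s}.
\]
As $x\in H$ and $0<s\le1$ are arbitrary and the constants do not depend on them, this is the asserted estimate. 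The step I expect to be the real obstacle is the geometric inclusion $\overline{\widetilde\Omega_s}\subset\Omega_{c_1 s}$ — equivalently, that a curve of a family tending nicely to $\Gamma$ encloses all of $\Omega$ except a collar of width comparable to its distance from $\Gamma$. For the families built after Definition~\ref{def:tends_nicely} out of the diffeomorphism $\eta$ of Lemma~\ref{lemma-eta} this is immediate, since there $\Omega_s$ and $\widetilde\Omega_s$ are preimages of concentric disks; in general it must be extracted from the uniform convergence (a) together with the two-sided comparison (b). Everything else — the Runge identification of the curve integral with the $E^2$-norm, the uniformity of the Ahlfors constants, and the exterior case — is routine.
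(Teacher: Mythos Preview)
Your strategy---identify the curve integrals as squared $E^2$-norms, compare them via Lemma~\ref{smirnov}, and reduce the exterior case to the interior one by the inversion $z\mapsto(z-z_0)^{-1}$---is exactly the paper's.  In fact your execution is more careful: the paper compares $\widetilde\Omega_s$ with the \emph{fixed} domain $\Omega_1$ and asserts $\overline{\widetilde\Omega_s}\subset\Omega_1$ for small $s$, but since $\widetilde\Omega_s$ exhausts $\Omega$ as $s\to0$ while $\Omega_1$ is a fixed proper subdomain, that inclusion is written the wrong way round; your comparison with the moving domain $\Omega_{c_1s}$ is the correct formulation of the same idea, and it delivers the factor $1/s$ directly rather than via the wasteful $1\le 1/s$.

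The obstacle you flag is genuine.  Your winding-number argument for $\overline{\widetilde\Omega_s}\subset\Omega_{c_1s}$ needs $\gamma_{c_1s}$ to be non-contractible in a collar of $\Gamma$; this follows if condition~(a) of Definition~\ref{def:tends_nicely} is read as uniform convergence of parametrizations (the straight-line homotopy to $\Gamma$ then stays in the collar), which is how the paper's own model families $\gamma^\pm_s=\eta^{-1}((1\pm\beta s)\T)$ are built.  Under a bare Hausdorff reading of~(a), however, a Jordan curve can lie in the collar, satisfy~(b) and~(c), yet be null-homotopic there---trace almost a full loop at one radius and return along a slightly different one---and then the $E^2$-comparison breaks down.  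Since the lemma is only ever invoked for the explicit families $\eta^{-1}(r\T)$ and $(1\pm s)\T$, where the inclusion is immediate, this is a matter of tightening the definition (or reading~(a) in the parametrized sense) rather than a flaw in your plan.
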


\begin{proof}
  First assume that $\{\gamma_s\}$ and $\{\widetilde{\gamma}_s\}$ tend
  nicely to $\Gamma$ from the inside.  Denote by $\Omega_s$ the domain
  bounded by $\gamma_s$ and by $\widetilde{\Omega}_s$ the domain
  bounded by $\widetilde{\gamma}_s$.

  By definition, since $\{\widetilde{\gamma}_s\}$ tends nicely to
  $\Gamma$, there exists $0<s_0 \leq 1$ such that for all $s \in
  (0,s_0)$, the closure of $\widetilde{\Omega}_s$ is contained in
  $\Omega_1$.  Then by continuity of $1/s$ on $[s_0,1]$, there exists
  a constant $\tilde C$ such that the claim holds whenever $s\in
  [s_0,1]$.

  So assume that $s \in (0,s_0)$.  Applying Lemma~\ref{smirnov} to the
  function $f(z) = (T-z)^{-1}x$ and the domains $\Omega_1 = \Omega_s$,
  $\Omega_2 = \widetilde{\Omega}_t$, to obtain
  \begin{equation*}
    \begin{split}
      & \int_{\widetilde{\gamma}_s} \|(T-\lambda)^{-1}x\|^2\,|d\lambda|
      = \|f|\widetilde{\Omega}\|_{E^2(\widetilde{\Omega}_s,H)} \leq
      K\|f\|_{E^2(\Omega_1,H)} \\ =&\, K\int_{\gamma_1}
      \|(T-\lambda)^{-1}x\|^2\,|d\lambda| \leq KC\|x\|^2 \leq
      \frac{KC\|x\|^2}{s_0} \leq \frac{KC\|x\|^2}{s}.
    \end{split}
  \end{equation*}

  If $\{\gamma_s\}$ and $\{\widetilde{\gamma}_s\}$ tend nicely to
  $\Gamma$ from the outside, choose a point $z_0$ inside the domain
  bounded by $\Gamma$ and apply an inversion: $z \mapsto
  (z-z_0)^{-1}$.  Since
  \begin{equation*}
    ((T-z_0)^{-1}-(\lambda-z_0)^{-1})^{-1} =
    (\lambda-z_0)(T-z_0)(T-\lambda)^{-1},
  \end{equation*}
  the bounds for the resolvent of $T$ imply equivalent bounds for the
  resolvent of $(T-z_0)^{-1}$, and conversely.  Thus, this case
  follows from the previous one.
\end{proof}

It is well known that, in the context of the unit circle, a resolvent
bound of mean-square type implies a pointwise resolvent bound such as
\eqref{eq:resolvent-estimate}.  The proof of this fact uses the usual
pointwise estimate for an $H^2$ function in the disk, which involves
the norm of the reproducing kernel.  The following lemma is a
generalization of this.

\begin{lemma}
  \label{lmean->lrg}
  Let $\Gamma \subset \C$ be a Jordan curve of class $C^{1+\alpha}$,
  $\Omega$ the region it bounds and $T \in \B(H)$ with
  $\sigma(T)\subset \Gamma$.  If
  \begin{equation}
    \label{eq:inside}
    \int_{\gamma_s} \|(T-\lambda)^{-1}x\|^2\,|d\lambda| \leq
    \frac{C\|x\|^2}{s}, \qquad x\in H,\ 0<s\leq 1,
  \end{equation}
  for some constant $C$ independent of $x$ and $s$ and some family of
  curves $\{\gamma_s\}$ which tends nicely to $\Gamma$ from the inside
  $($respectively, outside$)$, then
  \begin{equation*}
    \|(T-\lambda)^{-1}\| \leq \frac{C'}{\dist(\lambda,\Gamma)}, \qquad
    \lambda \in \Omega\ (\text{respectively,} \;\lambda \in
    \C\setminus\overline{\Omega}),
  \end{equation*}
  for some constant $C'$ independent of $\lambda$.
\end{lemma}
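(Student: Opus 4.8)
The plan is to reduce to the case $\Gamma = \T$, where the desired implication is the classical pointwise bound for $E^2$-functions on the disk in terms of the reproducing kernel. First I would use Lemma~\ref{lemma-independent-curves} to replace the given family $\{\gamma_s\}$ by the specific family $\gamma^{\pm}_s(t) = \eta^{-1}((1\pm\beta s)e^{it})$ constructed right after Definition~\ref{def:tends_nicely}, where $\eta$ is (a pseudoanalytic extension of) a $C^{1+\alpha}$ diffeomorphism carrying $\Gamma$ to $\T$, so that Lemma~\ref{lemma-eta} and Theorem~\ref{thm-main} apply. Then I would transplant the hypothesis: by Theorem~\ref{thm-main}, $\|(T-\lambda)^{-1}x\|$ and $\|(\eta(T)-\eta(\lambda))^{-1}x\|$ are comparable for $\lambda \in U\setminus\Gamma$, and by the bi-Lipschitz property of $\eta$ the arc-length measures on $\gamma^{\pm}_s$ and on $(1\pm\beta s)\T$ are comparable under the change of variables $\zeta = \eta(\lambda)$; hence the integral bound \eqref{eq:inside} for $T$ along $\gamma^{\pm}_s$ is equivalent to an integral bound
\begin{equation*}
  \int_{|\zeta| = 1\pm\beta s} \|(\eta(T)-\zeta)^{-1}x\|^2\,|d\zeta|
  \leq \frac{C''\|x\|^2}{s}, \qquad x \in H,\ 0 < s \leq 1,
\end{equation*}
for the operator $A := \eta(T)$, which has $\sigma(A) \subset \T$.

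The core of the argument is then the case of the circle: if $A \in \B(H)$ has $\sigma(A)\subset\T$ and satisfies the mean-square bound along the circles $|\zeta| = r$ for $r$ near $1$ (say $1 < r < 1+\delta$, or $1-\delta < r < 1$), then $\|(A-\zeta)^{-1}\| \leq C'''/(1-|\zeta|)$ for $\zeta$ on the appropriate side of $\T$. For this, fix $\zeta_0$ with, say, $|\zeta_0| = \rho < 1$ and set $d = 1 - \rho = \dist(\zeta_0,\T)$. For a unit vector $x \in H$ and arbitrary $y \in H$, the function $F(\zeta) = \langle (A-\zeta)^{-1}x, y\rangle$ is analytic in the disk $|\zeta| < 1$ (since $\sigma(A)\subset\T$), and I would bound $|F(\zeta_0)|$ by a Cauchy-type integral over the circle $|\zeta| = r$ with $r = \rho + d/2 = (1+\rho)/2$, chosen so that $\dist(\zeta_0, r\T)$ is comparable to $d$ and $1 - r$ is comparable to $d$. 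Writing $F(\zeta_0)$ via the Cauchy integral on $r\T$ and applying Cauchy--Schwarz in the integral gives
\begin{equation*}
  |\langle(A-\zeta_0)^{-1}x,y\rangle|
  \leq \frac{C}{d}\Bigl(\int_{|\zeta|=r}\|(A-\zeta)^{-1}x\|^2\,|d\zeta|\Bigr)^{1/2}\|y\|
  \leq \frac{C}{d}\Bigl(\frac{C''\|x\|^2}{1-r}\Bigr)^{1/2}\|y\|,
\end{equation*}
and since $1-r \asymp d$ the right-hand side is $\lesssim d^{-1}\|x\|\|y\|$; taking the supremum over unit vectors $y$ and then over unit $x$ yields $\|(A-\zeta_0)^{-1}\| \lesssim d^{-1} = \dist(\zeta_0,\T)^{-1}$. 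The outside case ($|\zeta_0| > 1$) is identical, using circles $r\T$ with $r$ slightly larger than $1$ and the fact that $(A-\zeta)^{-1}$ is analytic in $|\zeta| > 1$ (again since $\sigma(A)\subset\T$, and it vanishes at $\infty$). Finally I would translate the resulting estimate $\|(\eta(T)-\zeta)^{-1}\| \lesssim \dist(\zeta,\T)^{-1}$ back to $T$ using Theorem~\ref{thm-main} again, together with the comparability of $\dist(\lambda,\Gamma)$ and $\dist(\eta(\lambda),\T)$, obtaining $\|(T-\lambda)^{-1}\| \leq C'\dist(\lambda,\Gamma)^{-1}$ for $\lambda$ on the inside (respectively, outside) of $\Gamma$ within the neighborhood $U$; on the complement of $U$, $\|(T-\lambda)^{-1}\|$ is already bounded since $\sigma(T)\subset\Gamma$, so the stated estimate holds there trivially after enlarging $C'$.

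The step I expect to be the main obstacle is the transplantation of the \emph{mean-square} hypothesis from $T$ to $A = \eta(T)$: Theorem~\ref{thm-main} gives a pointwise comparison of resolvent norms, and the bi-Lipschitz change of variables handles the measures, but one must be careful that the image curves $\eta(\gamma^{\pm}_s)$ are genuinely the circles $(1\pm\beta s)\T$ (which they are, by construction, if one takes the pseudoanalytic extension of $\eta$ used to build the $\gamma^{\pm}_s$), and that the constant relating $s$ to $1 - |\zeta|$ along these images is uniform in $s$ — this is exactly condition (b) of Definition~\ref{def:tends_nicely} combined with Lemma~\ref{lemma-eta}. A secondary technical point is making sure Lemma~\ref{lemma-independent-curves} is applicable, i.e. that the standard family $\gamma^{\pm}_s$ and the given family $\{\gamma_s\}$ both tend nicely from the same side; this is immediate from the hypothesis. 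Everything else is routine estimation.
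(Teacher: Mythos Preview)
Your plan matches the paper's proof closely: both pass to $\eta(T)$ via Theorem~\ref{thm-main}, reduce to an $H^2$/Cauchy pointwise bound on circles, and translate back. The organizational differences are minor: the paper does not first invoke Lemma~\ref{lemma-independent-curves} but instead keeps the given $\gamma_s$ and uses Lemma~\ref{smirnov} to compare $\|f\|_{E^2(r\D,H)}$ with $\|f\|_{E^2(W,H)}$ for $f(z)=(\eta(T)-z)^{-1}x$ and $W$ the region bounded by $\eta(\gamma_s)$; and the paper handles the outside case by the inversion $z\mapsto (z-z_0)^{-1}$ rather than by symmetry. Your Cauchy--Schwarz estimate on $r\T$ is exactly the reproducing-kernel bound $\|g(z)\|\le (1-|z|^2)^{-1/2}\|g\|_{H^2}$ the paper quotes.

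One genuine point deserves comment, and it is shared with the paper's own argument: invoking Theorem~\ref{thm-main}, and indeed defining $\eta(T)$ through the Dynkin calculus at all, presupposes the \emph{two-sided} linear resolvent growth~\eqref{eq:resolvent-estimate}, whereas the lemma only assumes a one-sided mean-square bound and only concludes a one-sided pointwise bound. Neither your plan nor the paper's proof justifies why $\eta(T)$ is available at this stage, so the lemma as stated is not proved in a self-contained way. In the applications (Theorems~\ref{van-casteren} and~\ref{naboko}) one has enough control on both sides that the issue dissolves, but if you want a clean standalone argument, note that your own Cauchy-integral step can be carried out directly for $F(\lambda)=\langle (T-\lambda)^{-1}x,y\rangle$ on the curves $\eta^{-1}((1\mp\beta s)\T)$ after applying Lemma~\ref{lemma-independent-curves}: then $\eta$ enters only as a bi-Lipschitz change of variables to estimate $\int_{\gamma_s}|\lambda-\lambda_0|^{-2}\,|d\lambda|\asymp t^{-1}$, never as an operator, and the appeal to Theorem~\ref{thm-main} is avoided entirely.
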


\begin{proof}
  Assume that $\{\gamma_s\}$ tends nicely to $\Gamma$ from the inside.
  Let $\eta$ be a function as in the statement of
  Lemma~\ref{lemma-eta}, and $U$ the neighborhood of $\Gamma$ that
  appears in that lemma.  Fix $\lambda \in \Omega\cap U$.  Then
  because $\eta$ is bi-Lipschitz, $t = \dist(\lambda,\Gamma)$ is
  comparable to $\dist(\eta(\lambda),\T)$.  Put $r = 1-
  \dist(\eta(\lambda),\T)/2$.

  Now consider the Jordan curve $\Lambda=\eta^{-1}(r\T)$.  This lies
  inside $\Omega$ and $\dist(z,\Gamma)$ is comparable to $t$ for every
  $z \in \Lambda$.  Therefore, it is possible to choose $0<s\leq 1$
  such that $t \leq C_1 s$, and $\Lambda$ is inside the region bounded
  by $\gamma_s$.

  Fix $x \in H$ and put $f(z) = (\eta(T)-z)^{-1}x$, and $g(z) =
  f(z/r)$.  By the usual pointwise estimate for a function in $H^2$,
  the Hardy space of the disk,
  \begin{equation*}
    \begin{split}
      \|g(z)\| & \leq (1-|z|^2)^{-1/2}\|g\|_{E^2(\D,H)} =
    (1-|z|^2)^{-1/2}\|f\|_{E^2(r\D,H)} \\
    & \leq (1-|z|^2)^{-1/2} \|f\|_{E^2(W,H)},
    \end{split}
  \end{equation*}
  where $W$ is the domain bounded by $\eta(\gamma_s)$ and the last
  inequality comes from Lemma~\ref{smirnov}.  Now by
  Theorem~\ref{thm-main} and \eqref{eq:inside},
  \begin{equation*}
    \begin{split}
      & \|f\|^2_{E^2(W,H)} = \int_{\eta(\gamma_s)}
      \|(\eta(T)-z)^{-1}x\|^2\,|dz| \\ \leq &\, C_1
      \int_{\gamma_s}\|(\eta(T)-\eta(w))^{-1}x\|^2\,|dw| \leq C_2
      \int_{\gamma_s}\|(T-w)^{-1}x\|^2\,|dw| \leq
      \frac{C_3\|x\|^2}{s}.
    \end{split}
  \end{equation*}
  Hence,
  \begin{equation*}
    \|(\eta(T)-z/r)^{-1}x\|^2 \leq C_3(1-|z|^2)^{-1}s^{-1}\|x\|^2,
  \end{equation*}
  and the inequality above is valid for all $x \in H$.  Putting $z =
  r\eta(\lambda)$ yields
  \begin{equation*}
    \|(\eta(T)-\eta(\lambda))^{-1}\|^2 \leq
    C_3(1-|r\eta(\lambda)|^2)^{-1}s^{-1}
    \leq C_4 t^{-2}.
  \end{equation*}
  By another application of Theorem~\ref{thm-main},
  \begin{equation*}
    \|(T-\lambda)^{-1}\|\leq \frac{C'}{t} =
    \frac{C'}{\dist(\lambda,\Gamma)}.
  \end{equation*}

  The case when $\{\gamma_s\}$ tends nicely to $\Gamma$ from the
  outside is proved by applying the inversion $z \mapsto
  (z-z_0)^{-1}$, as in the proof of
  Lemma~\ref{lemma-independent-curves}.
\end{proof}

We now state and prove generalizations of Theorems \ref{vc83} and
\ref{n} in Theorems~\ref{van-casteren} and Theorem~\ref{naboko}.  The
proofs both follow the same line of reasoning, using the tools so far
developed to pass to $\T$ and then applying Van~Casteren's or Naboko's
theorem.  It is worth highlighting that, as with the original
theorems, there is no easy way to deduce either result from the other.

\begin{theorem}[Van~Casteren-type theorem for curves]
  \label{van-casteren}
  Let $\Gamma \subset \C$ be a Jordan curve of class $C^{1+\alpha}$,
  $\Omega$ the region it bounds and $T\in\B(H)$ with
  $\sigma(T)\subset\Gamma$.  Let $\{\gamma_s\}_{0<s\leq 1}$ be a
  family of curves which tends nicely to $\Gamma$ from the outside.
  Then $T$ is similar to a normal operator if and only if the
  following three conditions are satisfied.
  \begin{equation*}
    \|(T-\lambda)^{-1}\|\leq \frac{C}{\dist(\lambda,\Gamma)}, \qquad
    \lambda\in\Omega,
  \end{equation*}
  \begin{equation}
    \label{eq:int1}
    \int_{\gamma_s} \|(T-\lambda)^{-1}x\|^2\,|d\lambda| \leq
    \frac{C\|x\|^2}{s}, \qquad x \in H,\ 0<s\leq 1,
  \end{equation}
  \begin{equation}
    \label{eq:int2}
    \int_{\gamma_s} \|(T^*-\overline{\lambda})^{-1}x\|^2\,|d\lambda|
    \leq \frac{C\|x\|^2}{s}, \qquad x \in H,\ 0<s\leq 1.
  \end{equation}
\end{theorem}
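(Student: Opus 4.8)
The plan is to reduce the statement to Van~Casteren's Theorem~\ref{vc83} on $\T$ by means of the transplantation machinery of Section~\ref{sect-trasplantation}. Fix a function $\eta$ as in Lemma~\ref{lemma-eta} — for instance the Riemann map of $\Omega$ extended pseudoanalytically, as in the proof of Lemma~\ref{lemma-two-sided} — and set $A=\eta(T)$, so that $\sigma(A)\subset\T$ by Theorem~\ref{thm-main}. Since $\eta$ is bi-Lipschitz on the neighborhood $U$ of Lemma~\ref{lemma-eta}, the curves $\eta(\gamma_s)$ tend nicely to $\T$ from the outside, and by Lemma~\ref{lemma-independent-curves} one may replace $\{\gamma_s\}$ by the model family $\gamma_s^+=\eta^{-1}((1+\beta s)\T)$, whose images under $\eta$ are the circles of radius $1+\beta s$.

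For the sufficiency direction I would argue as follows. The pointwise hypothesis on $\Omega$, together with Theorem~\ref{thm-main} and the comparability of $\dist(\lambda,\Gamma)$ with $\dist(\eta(\lambda),\T)$, gives $\|(A-\mu)^{-1}\|\le C(1-|\mu|)^{-1}$ for $|\mu|<1$. Next, \eqref{eq:int1} fed through Theorem~\ref{thm-main} and the change of variables $\mu=\eta(\lambda)$ (under which $|d\lambda|\asymp|d\mu|$) yields $\int_{|\mu|=1+\beta s}\|(A-\mu)^{-1}x\|^2\,|d\mu|\le C\|x\|^2/s$; writing $r=1+\beta s$, this is exactly the first integral condition of Theorem~\ref{vc83}. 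For \eqref{eq:int2} I would use $(T^*-\overline\lambda)^{-1}=\big((T-\lambda)^{-1}\big)^*$ and apply Theorem~\ref{thm-main} to the operator $T^*$, the $C^{1+\alpha}$ curve $\overline\Gamma$, and the function $\overline\eta(z)=\overline{\eta(\overline z)}$, which has all the required properties; this shows $\|(T^*-\overline\lambda)^{-1}x\|$ is comparable to $\|(A^*-\overline{\eta(\lambda)})^{-1}x\|$, and then the reflection $z\mapsto\overline z$ (which preserves $\T$ and the Ahlfors-David condition, so that Lemma~\ref{lemma-independent-curves} still applies) delivers the second integral condition of Theorem~\ref{vc83} for $A$. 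Invoking Theorem~\ref{vc83} shows $A=\eta(T)$ is similar to a unitary operator, and Lemma~\ref{similar-unitary} then yields that $T$ is similar to a normal operator.

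For the necessity direction, suppose $T=SNS^{-1}$ with $N$ normal and $\sigma(N)\subset\Gamma$. Approximating the identity function on $\Gamma$ in $C^{1+\beta}(\Gamma)$ by functions $r_n\circ\eta$ with $r_n$ rational, exactly as in the proof of Lemma~\ref{similar-unitary}, and using continuity and multiplicativity of the Dynkin calculus, one gets $\eta(T)=S\,\eta(N)\,S^{-1}$; here $\eta(N)$ is unitary by the ordinary functional calculus for normal operators, so $\eta(T)$ is similar to a unitary operator. By the necessity part of Theorem~\ref{vc83}, $\eta(T)$ satisfies the pointwise bound on $\D$ and the two integral bounds on the circles $|\mu|=r$, $1<r<2$; running the comparisons above in reverse (Theorem~\ref{thm-main}, the change of variables $\lambda=\eta^{-1}(\mu)$, and Lemma~\ref{lemma-independent-curves} to pass from the model circles to an arbitrary family $\{\gamma_s\}$ tending nicely to $\Gamma$ from the outside) produces \eqref{eq:int1} and \eqref{eq:int2}. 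The pointwise estimate on $\Omega$ is immediate: $\|(N-\lambda)^{-1}\|=\dist(\lambda,\sigma(N))^{-1}\le\dist(\lambda,\Gamma)^{-1}$, hence $\|(T-\lambda)^{-1}\|\le\|S\|\,\|S^{-1}\|\,\dist(\lambda,\Gamma)^{-1}$.

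The main obstacle will be the bookkeeping for the adjoint condition \eqref{eq:int2}: one must track how complex conjugation interacts with the Dynkin calculus, namely $f(T)^*=\overline f(T^*)$ with $\overline f(z)=\overline{f(\overline z)}$, and verify that $\overline\Gamma$, $\overline\eta$ and the reflected curves $\overline{\gamma_s}$ meet all the hypotheses of Lemma~\ref{lemma-eta}, Theorem~\ref{thm-main}, and Definition~\ref{def:tends_nicely}. A secondary but routine point, needed in the necessity direction, is that the Dynkin calculus intertwines similarities, $\eta(STS^{-1})=S\,\eta(T)\,S^{-1}$, which again follows from its continuity and multiplicativity by approximation with rational functions.
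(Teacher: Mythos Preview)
Your proposal follows the paper's own argument almost line for line: transplant via $\eta$ and Theorem~\ref{thm-main}, push the integral estimates through the bi-Lipschitz change of variables, use Lemma~\ref{lemma-independent-curves} to land on circles, handle the adjoint via $\widetilde\eta(z)=\overline{\eta(\overline z)}$, invoke Theorem~\ref{vc83}, and finish with Lemma~\ref{similar-unitary}. There is one genuine omission in the sufficiency direction: before you can ``set $A=\eta(T)$'' or apply Theorem~\ref{thm-main}, you need the two-sided resolvent estimate~\eqref{eq:resolvent-estimate}, whereas your hypotheses only give the pointwise bound on $\Omega$. The paper fills this by first applying Lemma~\ref{lmean->lrg} to the integral condition~\eqref{eq:int1} (curves tending from the outside) to obtain the estimate on $\C\setminus\overline\Omega$; you should insert that step.

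For the converse, your route is correct but slightly less direct than the paper's. Rather than proving the intertwining $\eta(SNS^{-1})=S\eta(N)S^{-1}$ and then quoting the necessity half of Theorem~\ref{vc83}, the paper simply replaces $T$ by the normal $N$, notes that $\eta(N)$ is then unitary, and reads off the circle integrals directly from the power-series expansion of the resolvent (using $\|\eta(N)^n\|=1$); it then runs Theorem~\ref{thm-main} and Lemma~\ref{lemma-independent-curves} backwards exactly as you describe. Both arguments work, but the paper's avoids the approximation detour.
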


\begin{proof}
  First, assume that $T$ satisfies the three resolvent conditions.
  Let $\eta:U\to \mathbb{C}$ be a function as in the statement of
  Lemma~\ref{lemma-eta}.  Take $\gamma_s \subset U$ for every $0<s\leq
  1$.  By Lemma~\ref{lmean->lrg}, the operator $T$ satisfies the
  resolvent estimate \eqref{eq:resolvent-estimate}, so $\eta(T)$ is
  defined by the $C^{1+\alpha}(\Gamma)$-functional calculus for $T$.
  By Theorem~\ref{thm-main} and the fact that $\dist(\lambda,\Gamma)$
  and $\dist(\eta(\lambda),\T)$ are comparable,
  \begin{equation*}
    \|(\eta(T)-\lambda)^{-1}\| \leq \frac{C_1}{1-|\lambda|},\qquad
    |\lambda| < 1,
  \end{equation*}
  as well as
  \begin{equation*}
    \int_{\eta(\gamma_s)} \|(\eta(T)-\lambda)^{-1}x\|^2\,|d\lambda|
    \leq \frac{C_2\|x\|^2}{s},\qquad x\in H,\ 0 < s \leq 1,
  \end{equation*}
  which follows by making a change of variables $\lambda = \eta(\mu)$
  and applying Theorem~\ref{thm-main}.

  Since $\eta:U\to \eta(U)$ is a $C^1$ diffeomorphism and
  bi-Lipschitz, the family of curves $\{\eta(\gamma_s)\}_{0<s\leq 1}$
  tends nicely to $\T$ from the outside.  Therefore, by
  Lemma~\ref{lemma-independent-curves} applied to the family
  $\widetilde{\gamma_s}=(1+s)\T$,
  \begin{equation*}
    \int_{|\lambda|=r} \|(\eta(T)-\lambda)^{-1}x\|^2\,|d\lambda| \leq
    \frac{C_3\|x\|^2}{r-1},\qquad x\in H,\ 1 < r < 2.
  \end{equation*}

  Similar reasoning with $T^*$ in place of $T$ and
  $\widetilde{\eta}(z) = \overline{\eta(\overline{\zeta})}$ in place
  of $\eta$ shows that
  \begin{equation*}
    \int_{|\lambda|=r} \|(\eta(T)^*-\lambda)^{-1}x\|^2\,|d\lambda|
    \leq \frac{C_4\|x\|^2}{r-1},\qquad x\in H,\ 1 < r < 2,
  \end{equation*}
  as $\widetilde{\eta}(T^*) = \eta(T)^*$.

  Now apply Theorem~\ref{vc83} to deduce that $\eta(T)$ is similar to
  a unitary operator.  Then by Lemma~\ref{similar-unitary}, $T$ is
  similar to a normal operator.

  Conversely, assume that $T$ is similar to a normal operator and
  $\sigma(T)\subset \Gamma$.  Replacing $T$ by $STS^{-1}$ if
  necessary, it can be assumed that $T$ is normal.  The first
  condition on the resolvent of $T$ holds, since $\|(T-\lambda)^{-1}\|
  \leq \dist(\lambda,\Gamma)^{-1}$ for a normal operator $T$.

  The operator $\eta(T)$ is unitary, so by expressing the resolvent as
  a power series and using the fact that $\|\eta(T)^n\| = 1$ for all
  $n\geq 0$, it follows that
  \begin{equation*}
    \int_{r\T} \|(\eta(T)-\lambda)^{-1}x\|^2\, |d\lambda| \leq
    \frac{C_5\|x\|^2}{r-1},\qquad 1<r<2,\ x\in H.
  \end{equation*}
  By Theorem~\ref{thm-main} (although since $T$ is normal, a simpler
  argument could be devised), for some constant $C>0$ and some
  neighborhood $U$ of $\Gamma$,
  \begin{equation*}
    C^{-1}\|(T-\lambda)^{-1}x\|\leq \|(\eta(T)-\eta(\lambda))^{-1}x\|
    \leq C\|(T-\lambda)^{-1}x\|,\quad \lambda \in U\setminus \Gamma,\
    x\in H,
  \end{equation*}

  Therefore,
  \begin{equation*}
    \int_{\eta^{-1}(r\T)} \|(T-\lambda)^{-1}x\|^2\, |d\lambda| \leq
    \frac{C_6\|x\|^2}{r-1},\qquad 1<r<2,\ x\in H.
  \end{equation*}
  The family of curves $\widetilde{\gamma_s} = \eta^{-1}((1+s)\T)$
  tends nicely to $\Gamma$.  Apply
  Lemma~\ref{lemma-independent-curves} to get that $T$ satisfies
  \eqref{eq:int1}.  Use of similar reasoning, but with $T^*$ instead
  of $T$ and $\widetilde{\eta}$ instead of $\eta$ yields the
  inequality \eqref{eq:int2}.
\end{proof}

Sz.-Nagy proved in~\cite{SzNagy} that an operator $T$ is similar to a
unitary operator if and only if $\|T^n\|\leq C$ for all $n\in \Z$.  It
is easy to use this result to show that if $\sigma(T)\subset\Gamma$,
the resolvent estimate \eqref{eq:resolvent-estimate} holds and
$\|\eta^n(T)\|\leq C$ for all $n\in\Z$, then $T$ is similar to a
normal operator.

The following corollary is a generalization of Theorem~\ref{vc80}.
Note that here it is only assumed that $\|\eta(T)^n\|\leq C$ for all
$n \geq 0$.  The proof is similar to the proof of
Theorem~\ref{van-casteren}, but Theorem~\ref{vc80} is used instead of
Theorem~\ref{vc83}.

\begin{corollary}
  Let $\Gamma \subset \C$ be a Jordan curve of class $C^{1+\alpha}$,
  and $T\in\B(H)$ with $\sigma(T)\subset\Gamma$.  Assume that
  \begin{equation*}
    \|(T-\lambda)^{-1}\|\leq \frac{C}{\dist(\lambda,\Gamma)}, \qquad
    \lambda\in\C\setminus\Gamma.
  \end{equation*}
  Let $\eta:\Gamma\to\T$ be a function as in the statement of
  Lemma~\ref{lemma-eta}.  Define the operator $\eta(T)$ by the
  $C^{1+\alpha}$-functional calculus.  If $\eta(T)$ is power bounded,
  then $T$ is similar to a normal operator.
\end{corollary}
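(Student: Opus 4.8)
The plan is to run the argument of Theorem~\ref{van-casteren} in a simplified form: pass from $T$ to $\eta(T)$ via the Dynkin calculus, translate the pointwise resolvent bound with Theorem~\ref{thm-main}, invoke power boundedness in place of the two mean-square conditions, and apply Van~Casteren's Theorem~\ref{vc80} rather than Theorem~\ref{vc83}. The final transfer back from $\eta(T)$ to $T$ is handled by Lemma~\ref{similar-unitary}, just as in Lemma~\ref{lemma-two-sided}.

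First, since $T$ satisfies \eqref{eq:resolvent-estimate} on all of $\C\setminus\Gamma$ and $\eta$ is as in Lemma~\ref{lemma-eta}, the operator $\eta(T)$ is well defined by the $C^{1+\alpha}(\Gamma)$-functional calculus, $\sigma(\eta(T))\subset\T$ by the spectral mapping property, and Theorem~\ref{thm-main} applies. Using that $\dist(\lambda,\Gamma)$ and $\dist(\eta(\lambda),\T)$ are comparable on the neighborhood $U$ of Lemma~\ref{lemma-eta}, Theorem~\ref{thm-main} furnishes a constant $C_1$ with $\|(\eta(T)-\mu)^{-1}\|\leq C_1(1-|\mu|)^{-1}$ for $\mu\in\eta(U)\cap\D$, i.e. for $\mu$ in an annulus $1-\delta<|\mu|<1$. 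For $\mu$ in the complementary compact disc $\{|\mu|\leq 1-\delta\}$, which is disjoint from $\sigma(\eta(T))\subset\T$, the resolvent is bounded by some $M_\delta$, and since $1-|\mu|\leq 1$ there one also has $\|(\eta(T)-\mu)^{-1}\|\leq M_\delta(1-|\mu|)^{-1}$. Enlarging the constant,
\begin{equation*}
  \|(\eta(T)-\mu)^{-1}\|\leq \frac{C_2}{1-|\mu|},\qquad |\mu|<1.
\end{equation*}

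By hypothesis $\eta(T)$ is power bounded, so $\|\eta(T)^n\|\leq C$ for all $n\geq 0$. Hence $\eta(T)$ satisfies both hypotheses of Theorem~\ref{vc80} and is therefore similar to a unitary operator. Since the hypotheses of Theorem~\ref{thm-main} are in force, Lemma~\ref{similar-unitary} then gives that $T$ is similar to a normal operator. No step presents a genuine obstacle; the only point worth spelling out is that Theorem~\ref{thm-main} supplies the resolvent comparison only in a neighborhood of $\Gamma$, so one must patch in the rest of the disc using the compactness of $\{|\mu|\leq 1-\delta\}$ together with $\sigma(\eta(T))\subset\T$ — exactly the step left tacit in the proof of Lemma~\ref{lemma-two-sided}.
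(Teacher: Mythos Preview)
Your proposal is correct and is essentially the same argument the paper indicates: transplant the resolvent estimate to $\eta(T)$ via Theorem~\ref{thm-main}, use power boundedness to invoke Theorem~\ref{vc80} in place of Theorem~\ref{vc83}, and finish with Lemma~\ref{similar-unitary}. You have also made explicit the compactness patch extending the resolvent bound from the annulus $\eta(U)\cap\D$ to all of $\D$, a step the paper leaves tacit in the proof of Lemma~\ref{lemma-two-sided}.
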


\begin{theorem}[Naboko-type theorem for curves]
  \label{naboko}
  Let $\Gamma \subset \C$ be a Jordan curve of class $C^{1+\alpha}$,
  $\Omega$ the region it bounds and $T\in\B(H)$ with
  $\sigma(T)\subset\Gamma$.  Let $\{\gamma_s\}_{0<s\leq 1}$ be a
  family of curves which tends nicely to $\Gamma$ from the outside and
  $\{\widetilde{\gamma}_s\}$ a family of curves which tends nicely to
  $\Gamma$ from the inside.  Then $T$ is similar to a normal operator
  if and only if the following two conditions are satisfied for all $x
  \in H$ and $0<s\leq 1$:
  \begin{equation*}
    \int_{\gamma_s} \|(T-\lambda)^{-1}x\|^2\,|d\lambda| \leq
    \frac{C\|x\|^2}{s} \quad \text{and} \quad
    \int_{\widetilde{\gamma}_s}
    \|(T^*-\overline{\lambda})^{-1}x\|^2\,|d\lambda| \leq
    \frac{C\|x\|^2}{s}.
  \end{equation*}
\end{theorem}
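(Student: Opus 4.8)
The plan is to run the proof of Theorem~\ref{van-casteren} almost verbatim, transplanting everything to the unit circle via the function $\eta$ of Lemma~\ref{lemma-eta} and invoking Naboko's Theorem~\ref{n} in place of Van~Casteren's Theorem~\ref{vc83}. Write $\Gamma^*=\{\overline z:z\in\Gamma\}$ and $\Omega^*=\{\overline z:z\in\Omega\}$, so that $\sigma(T^*)\subset\Gamma^*=\partial\Omega^*$; reflection $z\mapsto\overline z$ is an isometry carrying ``tends nicely from the outside (inside)'' to the same notion for the reflected curves. As in the rest of the paper we fix $\eta$ so that it maps the inside of $\Gamma$ into $\D$ (replacing $\eta$ by $1/\eta$ if necessary), and set $\widetilde\eta(w)=\overline{\eta(\overline w)}$, a $C^{1+\alpha}$ map $\Gamma^*\to\T$, pseudoanalytically extended by the same formula, for which $\widetilde\eta(T^*)=\eta(T)^*$ (the adjoint computation for the Dynkin calculus already used in the proof of Theorem~\ref{van-casteren}). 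The one structural novelty compared with Theorem~\ref{van-casteren} is that Naboko's hypotheses pair $\eta(T)$ with the region \emph{outside} $\T$ and $\eta(T)^*$ with the region \emph{inside} $\T$, which is precisely why the two conditions in Theorem~\ref{naboko} are stated with $T$ on the outer family $\{\gamma_s\}$ and with $T^*$ on the inner family $\{\widetilde\gamma_s\}$.

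For sufficiency, assume the two mean-square bounds hold. First I would promote them to the pointwise estimate \eqref{eq:resolvent-estimate}: since $\{\gamma_s\}$ tends nicely to $\Gamma$ from the outside, Lemma~\ref{lmean->lrg} gives $\|(T-\lambda)^{-1}\|\le C'\dist(\lambda,\Gamma)^{-1}$ for $\lambda\in\C\setminus\overline{\Omega}$; and the change of variable $\mu=\overline\lambda$ turns the second hypothesis into $\int_{\overline{\widetilde\gamma_s}}\|(T^*-\mu)^{-1}x\|^2\,|d\mu|\le C\|x\|^2/s$, with $\{\overline{\widetilde\gamma_s}\}$ tending nicely to $\Gamma^*$ from the inside, so Lemma~\ref{lmean->lrg} applied to $T^*$ yields $\|(T^*-\mu)^{-1}\|\le C'\dist(\mu,\Gamma^*)^{-1}$ on $\Omega^*$, i.e. $\|(T-\lambda)^{-1}\|=\|(T^*-\overline\lambda)^{-1}\|\le C'\dist(\lambda,\Gamma)^{-1}$ on $\Omega$. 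Thus $T$ satisfies \eqref{eq:resolvent-estimate} and $A:=\eta(T)$ is defined with $\sigma(A)\subset\T$. By Theorem~\ref{thm-main} and the change of variable $\lambda=\eta(\mu)$, the first hypothesis transfers to $\int_{\eta(\gamma_s)}\|(A-\lambda)^{-1}x\|^2\,|d\lambda|\le C\|x\|^2/s$; since $\eta$ is a bi-Lipschitz $C^1$ diffeomorphism, $\{\eta(\gamma_s)\}$ tends nicely to $\T$ from the outside, so Lemma~\ref{lemma-independent-curves} with comparison family $(1+s)\T$ gives $\int_{|\lambda|=r}\|(A-\lambda)^{-1}x\|^2\,|d\lambda|\le C\|x\|^2/(r-1)$ for $1<r<2$. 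Running the same argument for $T^*$, $\widetilde\eta$ and the curves $\overline{\widetilde\gamma_s}\subset\Omega^*$ (whose $\widetilde\eta$-images lie inside $\D$), comparing with $(1-s)\T$, gives $\int_{|\lambda|=r}\|(A^*-\lambda)^{-1}x\|^2\,|d\lambda|\le C\|x\|^2/(1-r)$ for $r<1$. These are exactly the hypotheses of Theorem~\ref{n}, whence $A=\eta(T)$ is similar to a unitary operator, and Lemma~\ref{similar-unitary} gives that $T$ is similar to a normal operator.

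For necessity, if $T$ is similar to a normal operator, then conjugating by the intertwiner only multiplies the constants in the two mean-square bounds and leaves $\sigma(T)$ unchanged, so we may assume $T$ normal; then it satisfies \eqref{eq:resolvent-estimate}, $A=\eta(T)$ agrees with the value of $\eta$ under the continuous functional calculus, and hence $A$ is \emph{unitary}. Expanding $(A-\lambda)^{-1}$ and $(A^*-\lambda)^{-1}$ as geometric series in $A$ and integrating, using $\|A^nx\|=\|x\|$ and the orthogonality of $\{e^{in\theta}\}$, one obtains $\int_{|\lambda|=r}\|(A-\lambda)^{-1}x\|^2\,|d\lambda|\le C\|x\|^2/(r-1)$ for $1<r<2$ and $\int_{|\lambda|=r}\|(A^*-\lambda)^{-1}x\|^2\,|d\lambda|\le C\|x\|^2/(1-r)$ for $r<1$. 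Pulling these back to $T$ and $T^*$ by Theorem~\ref{thm-main} (applied with $\eta$ and with $\widetilde\eta$), and then replacing the special curves $\eta^{-1}((1+s)\T)$ and $\widetilde\eta^{-1}((1-s)\T)$ by the given families $\{\gamma_s\}$ and $\{\overline{\widetilde\gamma_s}\}$ via Lemma~\ref{lemma-independent-curves}, yields the two asserted inequalities (the $T^*$ one after undoing the substitution $\mu=\overline\lambda$).

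I do not expect a deep obstacle: the theorem is assembled from the tools already developed and structurally mirrors Theorem~\ref{van-casteren}. The step requiring the most care is the systematic bookkeeping between $T$ and $T^*$, between $\Gamma$ and $\Gamma^*$, and between $\eta$ and $\widetilde\eta$ --- at every stage one must verify that the relevant family of curves still ``tends nicely from the outside'' or ``from the inside'', so that Lemma~\ref{lemma-independent-curves} applies and Theorem~\ref{n} receives its hypotheses with the exponents matched ($r-1$ for $A$ on $|\lambda|>1$, $1-r$ for $A^*$ on $|\lambda|<1$). In the converse direction the only genuine computation is checking that a unitary satisfies the mean-square bounds with the \emph{correct} power $(r-1)^{-1}$, which amounts to the Poisson-type identity $\int_{|\lambda|=r}|\zeta-\lambda|^{-2}\,|d\lambda|=2\pi r/(r^2-1)$ for $|\zeta|=1<r$ (equivalently the orthogonality used above), and should be recorded explicitly.
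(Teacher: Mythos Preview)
Your proposal is correct and follows essentially the same route as the paper's proof: transplant to $\T$ via $\eta$, invoke Theorem~\ref{n} in place of Theorem~\ref{vc83}, and finish with Lemma~\ref{similar-unitary}, with the converse handled as in Theorem~\ref{van-casteren}. Your write-up is in fact more explicit than the paper's sketch in two respects: you spell out how the two mean-square hypotheses combine (via Lemma~\ref{lmean->lrg} applied to $T$ on the outer family and to $T^*$ on the reflected inner family) to yield the full two-sided pointwise estimate \eqref{eq:resolvent-estimate}, and you track the $T/T^*$, $\Gamma/\Gamma^*$, $\eta/\widetilde\eta$ bookkeeping carefully so that Naboko's inside/outside hypotheses line up correctly.
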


\begin{proof}
  The proof of this theorem is like the proof of
  Theorem~\ref{van-casteren}.  If $T$ satisfies the two conditions in
  the statement of this theorem, instead of using Van~Casteren's
  theorem, use Naboko's Theorem~\ref{n} to show that $\eta(T)$ is
  similar to a unitary operator.  We only sketch the proof.

  First, Lemma~\ref{lmean->lrg} implies that $T$ satisfies the
  resolvent estimate \eqref{eq:resolvent-estimate}.  Choose a function
  $\eta$ as in Lemma~\ref{lemma-eta}.  The operator $\eta(T)$ is well
  defined.  By Theorem~\ref{thm-main} and
  Lemma~\ref{lemma-independent-curves},
  \begin{equation*}
    \int_{|\lambda|=r} \|(\eta(T)-\lambda)^{-1}x\|^2\,|d\lambda| \leq
    \frac{C_1\|x\|^2}{r-1},\qquad x\in H,\ 1 < r < 2
  \end{equation*}
  and
  \begin{equation*}
    \int_{|\lambda|=r} \|(\eta(T)^*-\lambda)^{-1}x\|^2\,|d\lambda| \leq
    \frac{C_2\|x\|^2}{1-r},\qquad x\in H,\ 0 < r < 1.
  \end{equation*}
  Theorem~\ref{n}, then gives that $\eta(T)$ is similar to a unitary
  operator.  It follows that $T$ is similar to a normal operator by
  Lemma~\ref{similar-unitary}.

  The converse direction is proved as in Theorem~\ref{van-casteren}.
\end{proof}

\section{Some comments and examples}
\label{sec-examples}

So far we have only discussed operators with spectrum on some smooth
curve in $\C$.  Similar results can be presented if the spectrum is
allowed to be a union of a smooth curve and a sequence of points
tending to this curve.  In~\cite{BenamaraNikolski}, Benamara and
Nikolski show that a contraction $T$ with finite defects is similar to
a normal operator if and only if $\sigma(T)\neq\overline{\D}$ and
$\|(T-\lambda)^{-1}\| \leq C\dist(\lambda,\sigma(T))^{-1}$ for all
$\lambda\in\C\setminus\sigma(T)$.  For such a contraction,
$\sigma(T)\setminus\T$ is always a Blaschke sequence in $\D$.
Moreover, Benamara and Nikolski prove that the resolvent estimate
forces $\sigma(T) \cap \D$ to be quite sparse (more precisely, it has
to satisfy the $\Delta$-Carleson condition).  Later in~\cite{Kupin},
Kupin studied contractions with infinite defects.  He proved that if
the spectrum of a contraction $T$ is not all $\overline{\D}$, and if
it satisfies both $\|(T-\lambda)^{-1}\| \leq
C\dist(\lambda,\sigma(T))^{-1}$ for all $\lambda \in \sigma(T)$ and
the so-called Uniform Trace Boundedness condition, then it is similar
to a normal operator.  An analogue of Uniform Trace Boundedness
condition for dissipative operators was given by Vasyunin and Kupin
in~\cite{KupinVasyunin}, and then applied to integral operators.
In~\cite{Kupin2003}, Kupin also uses the Uniform Trace Boundedness
condition to give conditions for an operator similar to a contraction
to be similar to a normal operator.

On the other hand, Kupin and Treil showed in~\cite{KupinTreil} that if
$T$ is a contraction with $\sigma(T) \neq \overline{\D}$ and
$\|(T-\lambda)^{-1}\| \leq C\dist(\lambda, \sigma(T))^{-1}$ but one
only assumes that $I-T^*T$ is trace class (instead of finite rank),
then $T$ need not be similar to a normal operator, thus solving a
conjecture in~\cite{BenamaraNikolski}.

All these results concern operators with \emph{thin} spectrum (in
other words, those for which the area of the spectrum is zero).  For
operators having \emph{thick} spectrum (so non-zero area), in general
there is no hope of obtaining criteria for similarity to a normal
operator solely in terms of resolvent operator estimates.  Indeed, for
any hyponormal operator $T$, the best possible estimate
$\|(T-\lambda)^{-1}\| =\dist(\lambda, \sigma(T))^{-1}$ holds, and for
any compact set $F$ of positive area there exists a hyponormal
operator $T$ not similar to a normal one with $\sigma(T)=F$.

Resolvent conditions for similarity to other classes of operators,
such as selfadjoint operators or isometries, have also been considered
in the literature.  Faddeev gives conditions in~\cite{Faddeev} for
similarity to an isometry in the case where $\dim \ker (T^* - \lambda
I) = 1$ for all $\lambda \in \D$.  In~\cite{Popescu}, Popescu also
states several conditions for similarity to an isometry.

In~\cite{Malamud}, Malamud gives a series of abstract conditions for
an operator $A$ to be similar to a selfadjoint one.  His conditions
involve, in particular, resolvent estimates of the form
$\|V^{1/2}(A-\lambda)^{-1}\| \leq C|\Im \lambda|^{-1/2}$, where $V =
|\Im A|$.  These estimates are related to Theorem~\ref{n}.  He applies
his results to a triangular operator on $L^2([0,1],d\mu)$ of the form
\begin{equation}
  \label{triang}
  (Af)(x) = \alpha(x) f(x) + i\int_x^1 K(x,t)f(t)\,d\mu(t),
\end{equation}
for an Hermitian kernel $K(x,t)$.

Naboko and Tretter used Theorem~\ref{n} in~\cite{NabokoTretter} to
examine operators of the form~\eqref{triang}, where
$K(x,t)=\phi(x)\psi(t)$ (so that $K(x,t)$ need not be Hermitian) and
$\phi\psi \equiv 0$.  By using our criteria for curves, most likely
Naboko and Tretter's results can be extended to analogous operators on
$L^2$ of a curve, though it might be rather technical.

Also concretely, such conditions have been used in the study of
differential operators.  For example, in~\cite{FaddeevShterenberg2},
Faddeev and Shterenberg use a version of Theorem~\ref{n} in examining
similarity to a selfadjoint operator for operators of the form $A =
-\frac{\operatorname{sign}x}{|x|^\alpha p(x)} \frac{d^2}{dx^2}$, where
$\alpha > -1$ and $p$ is a positive function which is bounded above
and below.  Their criteria were further generalized by Karabash,
Kostenko and Malamud in~\cite{KarabKostMalamud}.

Resolvent growth conditions for similarity to a unitary operator have
also been used in the study of Toeplitz operators with unimodular
symbol; see~\cites{Clark, Peller, Gamal-Toe} and references therein.

Article~\cite{ElFallahRansf2004} contains a discussion of the
relationship between the growth of powers of an operator $T$ with
$\sigma(T)\subset \overline{\D}$, first order growth of its resolvent
outside $\overline{\D}$ and the size of the set $\sigma(T)\cap \T$.

The conditions for a contraction $T$ to be similar to a unitary
operator in terms of the characteristic function of $T$ are well
known.  Given a contraction $T\in\B(H)$, one defines defect operators
$D_T = (I-T^*T)^{\frac{1}{2}}$, $D_{T^*} = (I-TT^*)^{\frac{1}{2}}$ and
defect spaces $\mathfrak{D}_T = \overline{D_T H}$, $\mathfrak{D}_{T^*}
= \overline{D_T^* H}$.  For $\lambda \in \D$, the \emph{characteristic
  function} $\Theta_T(\lambda) : \mathfrak{D}_T \to
\mathfrak{D}_{T^*}$ is given by
\begin{equation*}
  \Theta_T(\lambda) = [-T + \lambda D_{T^*}(I-\lambda
  T^*)^{-1}D_T]|\mathfrak{D}_T.
\end{equation*}
As Sz.-Nagy and Foias proved in~\cite{SzNagyFoias65} (see also
Sz.-Nagy and Foias~\cite{SzNagyFoiasBook}*{Chapter~9}), $T$ is similar
to a unitary operator if and only if $\Theta_T(\lambda)$ is invertible
for all $\lambda \in \D$ and
\begin{equation}
  \label{eq:NF-criterion}
  \sup_{\lambda\in\D} \|\Theta_T(\lambda)^{-1}\| < \infty.
\end{equation}
L.A.~Saknovich extended the results of Sz.-Nagy and Foias to operators
which are not necessarily contractions in~\cite{Sakhnovich}.
Saknovich's condition is only sufficient for similarity to a unitary
operator and not necessary in general.  See also
Naboko~\cite{Naboko-1981sing}*{Theorem~12}.

In a series of articles, Naboko constructed and studied a functional
model for non-dissipative perturbations of self-adjoint operators.  A
detailed exposition of this model can be found in~\cite{Naboko-1981}.
In that paper, the problem of existence of wave operators in this
context is discussed.  A functional model for perturbations of normal
operators with spectrum on a curve, extending Naboko's model, has been
developed by Tikhonov in~\cite{Tikhonov-A} and subsequent papers.

\begin{example}
  A purely contractive function $\Theta$ can be chosen satisfying the
  condition \eqref{eq:NF-criterion} and the Sz.-Nagy Foias model used
  to construct a completely non-unitary contraction $T$ such that
  $\Theta_T = \Theta$.  Such a contraction is non-unitary and similar
  to a unitary operator, and so $\sigma(T) \subset \mathbb T$.  Hence
  \begin{equation*}
    \|(T-\lambda)^{-1}\| \leq \frac{C}{1-|\lambda|},\qquad
    |\lambda|<1.
  \end{equation*}
  Since $T$ is also a contraction, by von~Neumann's inequality
  \begin{equation*}
    \|(T-\lambda)^{-1}\| \leq \frac{1}{|\lambda|-1},\qquad
    |\lambda|>1.
  \end{equation*}
  Recall that, by Stampfli's theorem stated in the introduction, if
  under these conditions $T$ satisfies
  \begin{equation*}
    \|(T-\lambda)^{-1}\|\leq \frac{1}{||\lambda|-1|}, \qquad
    |\lambda|\neq 1,
  \end{equation*}
  then $T$ must be normal.  Thus this yields an example of an operator
  which satisfies the hypotheses of Theorem~\ref{two-sided} and is
  non-normal.
\end{example}

There are also examples of this type among the class of
$\rho$-contractions.  If $\rho > 0$, an operator $T \in \B(H)$ is
called a \emph{$\rho$-contraction} if there is a larger Hilbert space
$K \supset H$ and a unitary $U \in \B(K)$ such that
\begin{equation*}
  T^n = \rho P_HU^n|H,\qquad n=1,2,\ldots,
\end{equation*}
where $P_H$ denotes the orthogonal projection onto $H$.  The classes
$\mathcal{C}_\rho$ of $\rho$-contractions are nested and increasing
with $\rho$, and the class $\mathcal{C}_1$ coincides with the class of
contractions.

If $T$ is a $\rho$-contraction with $\rho \geq 2$, then
\begin{equation*}
  \|(T-\lambda)^{-1}\| \leq \frac{1}{|\lambda|-1},\qquad 1 < |\lambda|
  < \frac{\rho-1}{\rho-2}.
\end{equation*}
(Here $\frac{\rho-1}{\rho-2} = +\infty$ if $\rho = 2$.)  Therefore,
any $\rho$-contraction which is similar to a unitary operator also
satisfies the hypotheses of Theorem~\ref{two-sided}.  If $T$ is a
$2$-contraction, then one may take the set $U = \C$ in the hypotheses
of Theorem~\ref{two-sided}.  However, for a $\rho$-contraction with
$\rho > 2$, $U$ will in general be a smaller set.

It is natural to ask if there is an example of a $\rho$-contraction
which is not a contraction and where the spectrum is contained in the
unit circle.  Stampfli shows that this can occur in
\cite{Stampfli3}*{Example~2}.  There, $\rho=2$ and the spectrum of the
operator is a single point.  He proves that since the spectrum is
countable, this operator must be normal.

Another example, this time with a bilateral weighed shift, is given
below.  Here the spectrum is the whole unit circle, and the operator
is similar to a unitary, but is not normal.

Any such example must have uncountable spectrum.  Consequently, it
would be interesting to know for a non-normal operator $T$ with
spectrum $\sigma(T)$ contained in a curve $\Gamma$ and satisfying the
hypotheses of Theorem~\ref{two-sided} (so that it is similar to a
normal operator), just how small $\sigma(T)$ can be.  See
\cite{ElFallahRansf2004} and references therein for a discussion of
some similar questions.

\begin{example}
  \label{example-weighted-shift}
  Assume that $\alpha,\beta > 0$, $\max(\alpha,\beta) > 1$, and
  $\alpha^2 + \beta^2 \leq 4$.  Let $T$ be the bilateral weighted
  shift $T$ on $\ell^2(\Z)$ with weights
  $\{\ldots,1,1,\boxed{\alpha},\beta,1,1,\ldots\},$ defined by
  \begin{equation*}
    T(\ldots,x_{-1},\boxed{x_0},x_1, x_2,\ldots) =
    (\ldots,x_{-2},\boxed{x_{-1}},\alpha x_0, \beta x_1, x_2, \ldots).
  \end{equation*}
  (Here $\boxed{\phantom{\cdot}}$ indicates the $0$-th component).
  Then $T$ is a $2$-contraction which is not a contraction, yet is
  similar to a unitary operator.
\end{example}

Obviously, $\|T\| = \max(\alpha,\beta,1) > 1$.  Since $\alpha,\beta >
0$, the operator $T$ is similar to the unitary bilateral shift $U$ on
$\ell^2(\Z)$ with all weights equal to $1$.  It remains to show that
$T$ is a $2$-contraction.

Recall that $T$ is a $2$-contraction if and only if
\begin{equation*}
  \operatorname{Re}(\theta T) \leq I, \qquad |\theta| = 1.
\end{equation*}
Since $\theta T$ is unitarily equivalent to $T$ when $|\theta|=1$, it
is enough to check this inequality for $\theta = 1$.

Put $A = 2 \Re T$, and check that $\sigma(A) \cap (2,+\infty) =
\emptyset$.  Since $A$ is a finite rank perturbation of $U+U^*$ and
$\sigma(U+U^*) = [-2,2]$, it suffices to show that $A$ has no
eigenvalues in $(2,+\infty)$.  Assume that $x = (x_n)_{n\in\Z}$ is a
non-zero vector in $\ell^2(\Z)$ that satisfies $(A-\lambda)x = 0$ for
some $\lambda > 2$.  This means that
\begin{equation}
  \label{eq:*}
  x_n - \lambda x_{n+1} + x_{n+2} = 0, \quad |n| \geq 2,
\end{equation}
\begin{equation}
  \label{eq:**}
  x_{-1} - \lambda x_0 + \alpha x_1 = 0,
\end{equation}
\begin{equation}
  \label{eq:***}
  \alpha x_0 - \lambda x_1 + \beta x_2 = 0,
\end{equation}
\begin{equation}
  \label{eq:****}
  \beta x_1 - \lambda x_2 + x_3 = 0.
\end{equation}

Put
\begin{equation*}
  u_\pm = u_\pm(\lambda) = \frac{\lambda \pm \sqrt{\lambda^2-4}}{2}\, .
\end{equation*}
Then \eqref{eq:*} and $x\in \ell^2(\Z)$ imply that for some non-zero
$a,b$,
\begin{equation*}
  \begin{split}
    x_n &= au_-^n,\qquad n \geq 2,\\
    x_n &= bu_+^n,\qquad n \leq 0.
  \end{split}
\end{equation*}
The quotients $y_n = \frac{x_{n+1}}{x_n}$ satisfy
\begin{equation*}
  \begin{split}
    y_n &= u_-,\qquad n \geq 2,\\
    y_n &= u_+, \qquad n \leq -1.
  \end{split}
\end{equation*}

If
\begin{equation*}
  F x_{n} - \lambda x_{n+1} + G x_{n+2} = 0,
\end{equation*}
then $y_n$ is obtained from $y_{n+1}$ by applying the M\"obius
transformation $z \mapsto \frac{F}{-Gz + \lambda}$, which can be
encoded by the $2\times 2$ matrix { \tiny $\begin{pmatrix} 0 & F \\ -G
    & \lambda \end{pmatrix}$}.  The composition of M\"obius
transformations reduces to multiplying the corresponding $2\times 2$
matrices, so equations \eqref{eq:**}--\eqref{eq:****} yield
\begin{equation*}
  u_+(\lambda) = y_{-1} = \frac{M_{11}(\lambda)y_2 +
    M_{12}(\lambda)}{M_{21}(\lambda)y_2 + M_{22}(\lambda)} =
  \frac{M_{11}(\lambda)u_-(\lambda) +
    M_{12}(\lambda)}{M_{21}(\lambda)u_-(\lambda) + M_{22}(\lambda)},
\end{equation*}
where
\begin{equation*}
  \begin{pmatrix}
    M_{11}(\lambda) & M_{12}(\lambda)\\ M_{21}(\lambda) &
    M_{22}(\lambda)
  \end{pmatrix}
  =
  \begin{pmatrix}
    0 & 1\\ -\alpha & \lambda
  \end{pmatrix}
  \begin{pmatrix}
    0 & \alpha\\ -\beta & \lambda
  \end{pmatrix}
  \begin{pmatrix}
    0 & \beta\\ -1 & \lambda
  \end{pmatrix}.
\end{equation*}

Putting
\begin{equation*}
  \begin{split}
    f(\lambda) &= u_+(\lambda)\big(M_{21}(\lambda)u_-(\lambda) +
    M_{22}(\lambda)\big) - \big(M_{11}(\lambda)u_-(\lambda) +
    M_{12}(\lambda)\big) =\\
    &= \lambda[(\lambda^2 - (\alpha^2+\beta^2))u_+(\lambda) +
    u_-(\lambda)],
  \end{split}
\end{equation*}
it follows that $f(\lambda) = 0$.  However, since $\lambda > 2$,
$\alpha^2 + \beta^2 \leq 4$, and $u_+(\lambda)$ and $u_-(\lambda)$ are
positive, $f(\lambda) > 0$.  This is a contradiction.

\medskip

In the above argument, the numerical radius of the weighted shift $T$
was computed by examining the spectrum of its real part.  There are
several works in the literature devoted to the study of the numerical
radius of weighted shifts, using similar techniques.
See~\cites{UndrakhEtl} and references therein.

In~\cite{AndoTakahashi}, And\^o and Takahashi proved that if an
operator $T$ is polynomially bounded and there exist an injective
operator $X$ and a unitary operator $W$ with non-singular spectral
measure with respect to the Lebesgue measure on $\T$, and such that
$XT = WX$, then $T$ is similar to a unitary operator.  Moreover, if
such $T$ is also a $\rho$-contraction for some $\rho > 0$, then $T$ is
itself unitary.  This does not apply in
Example~\ref{example-weighted-shift}, since the operator $T$ is
similar to the bilateral shift in $L^2(\T)$, the spectral measure of
which is not singular.  A similar result is contained in
Mlak~\cite{Mlak}.  See Gamal'~\cite{Gamal} and the references therein
for extensions of these results.

\begin{example}
  One can easily construct non-normal operators which satisfy the
  hypotheses of Theorem~\ref{two-sided} for a Jordan domain $\Omega
  \neq \D$.  Let $A$ be a non-unitary contraction which is similar to
  a unitary operator.  Take a Riemann mapping $\varphi:\Omega \to \D$
  and put $\psi = \varphi^{-1}$.  The operator $T = \varphi(A)$ is
  well defined and non-normal.  If $\lambda \in
  \C\setminus\overline{\Omega}$, then by von~Neumann's inequality
  \begin{equation*}
    \|(T-\lambda)^{-1}\| \leq
    \|(\varphi-\lambda)^{-1}\|_{H^\infty(\D)} =
    \dist(\lambda,\Omega)^{-1}.
  \end{equation*}
  If $\lambda \in \Omega$, the inequality
  \begin{equation*}
    \|(T-\lambda)^{-1}\|\leq C\dist(\lambda,\Omega)^{-1}
  \end{equation*}
  follows from the fact that $T$ is similar to a normal operator.  The
  operator $T$ satisfies the hypotheses of Theorem~\ref{two-sided}.
\end{example}

It is not obvious how to use a Riemann mapping in a similar manner to
get a result analogous to Example~\ref{example-weighted-shift} for a
general Jordan domain $\Omega$.

\begin{bibdiv}
\begin{biblist}

\bib{Ahlfors}{article}{
   author={Ahlfors, Lars},
   title={Zur Theorie der \"Uberlagerungsfl\"achen},
   journal={Acta Math.},
   volume={65},
   date={1935},
   number={1},
   pages={157--194},
}

\bib{AndoTakahashi}{article}{
   author={Ando, T.},
   author={Takahashi, K.},
   title={On operators with unitary $\rho$-dilations},
   note={Volume dedicated to the memory of W\l odzimierz Mlak},
   journal={Ann. Polon. Math.},
   volume={66},
   date={1997},
   pages={11--14},
}

\bib{BenamaraNikolski}{article}{
   author={Benamara, Nour-Eddine},
   author={Nikolski, Nikolai},
   title={Resolvent tests for similarity to a normal operator},
   journal={Proc. London Math. Soc. (3)},
   volume={78},
   date={1999},
   number={3},
   pages={585--626},
}

\bib{Berberian3}{article}{
   author={Berberian, S. K.},
   title={Some conditions on an operator implying normality. III},
   journal={Proc. Japan Acad.},
   volume={46},
   date={1970},
   pages={630--632},
}

\bib{CampbellGellar2}{article}{
   author={Campbell, Stephen L.},
   author={Gellar, Ralph},
   title={Linear operators for which $T\sp*T$ and $T+T\sp*$ commute. II},
   journal={Trans. Amer. Math. Soc.},
   volume={226},
   date={1977},
   pages={305--319},
}

\bib{VanCasteren80}{article}{
   author={Van Casteren, Jan A.},
   title={A problem of Sz.-Nagy},
   journal={Acta Sci. Math. (Szeged)},
   volume={42},
   date={1980},
   number={1-2},
   pages={189--194},
}

\bib{VanCasteren83}{article}{
   author={Van Casteren, Jan A.},
   title={Operators similar to unitary or selfadjoint ones},
   journal={Pacific J. Math.},
   volume={104},
   date={1983},
   number={1},
   pages={241--255},
}

\bib{Clark}{article}{
   author={Clark, Douglas N.},
   title={On Toeplitz operators with unimodular symbols},
   conference={
      title={Operators in indefinite metric spaces, scattering theory and
      other topics},
      address={Bucharest},
      date={1985},
   },
   book={
      series={Oper. Theory Adv. Appl.},
      volume={24},
      publisher={Birkh\"auser, Basel},
   },
   date={1987},
   pages={59--68},
}

\bib{David}{article}{
   author={David, Guy},
   title={Op\'erateurs int\'egraux singuliers sur certaines courbes du plan
   complexe},
   journal={Ann. Sci. \'Ecole Norm. Sup. (4)},
   volume={17},
   date={1984},
   number={1},
   pages={157--189},
}

\bib{Djordjevic}{article}{
   author={Djordjevi\'c, Dragan S.},
   title={Characterizations of normal, hyponormal and EP operators},
   journal={J. Math. Anal. Appl.},
   volume={329},
   date={2007},
   number={2},
   pages={1181--1190},
}

\bib{Donoghue}{article}{
   author={Donoghue, William F., Jr.},
   title={On a problem of Nieminen},
   journal={Inst. Hautes \'Etudes Sci. Publ. Math.},
   number={16},
   date={1963},
   pages={31--33},
}

\bib{article2}{article}{
   author={Dritschel, Michael A.},
   author={Est\'evez, Daniel},
   author={Yakubovich, Dmitry},
   title={Tests for complete $K$-spectral sets},
   journal={J. Funct. Anal.},
   volume={273},
   date={2017},
   number={3},
   pages={984--1019},
}

\bib{Dynkin72}{article}{
   author={Dyn{\cprime}kin, E. M.},
   title={An operator calculus based on the Cauchy-Green formula},
   language={Russian},
   journal={Zap. Nau\v cn. Sem. Leningrad. Otdel. Mat. Inst. Steklov.
   (LOMI)},
   volume={30},
   date={1972},
   pages={33--39},
  translation={
      journal={J. Soviet Math.},
      volume={4},
      date={1975},
      number={4},
      pages={329--334},
   }
}

\bib{Dynkin74}{article}{
   author={Dyn{\cprime}kin, E. M.},
   title={Pseudoanalytic continuation of smooth functions. Uniform scale},
   conference={
      title={Mathematical programming and related questions},
      address={Proc. Seventh Winter School, Drogobych},
      date={1974},
   },
   book={
      publisher={Central \`Ekonom.-Mat. Inst. Akad. Nauk SSSR, Moscow},
   },
   date={1976},
   pages={40--73},
}

\bib{ElFallahRansf2004}{article}{
   author={El-Fallah, Omar},
   author={Ransford, Thomas},
   title={Peripheral point spectrum and growth of powers of operators},
   journal={J. Operator Theory},
   volume={52},
   date={2004},
   number={1},
   pages={89--101},
}

\bib{Faddeev}{article}{
   author={Faddeev, M. M.},
   title={Similarity of an operator to an isometric operator},
   journal={Funktsional. Anal. i Prilozhen.},
   volume={23},
   date={1989},
   number={2},
   pages={77--78},
   translation={
      journal={Funct. Anal. Appl.},
      volume={23},
      date={1989},
      number={2},
      pages={149--151},
   },
}

\bib{FaddeevShterenberg2}{article}{
    AUTHOR = {Faddeev, M. M.},
    AUTHOR = {Shterenberg, R. G.},
     TITLE = {On the similarity of some differential operators to
              selfadjoint operators},
   JOURNAL = {Mat. Zametki},
    VOLUME = {72},
      YEAR = {2002},
    NUMBER = {2},
     PAGES = {292--302},
   translation={
      journal={Math. Notes},
      volume={72},
      date={2002},
      number={1-2},
      pages={261--270},
   },
}

\bib{Gamal-Toe}{article}{
   author={Gamal', Maria F.},
   title={On Toeplitz operators similar to isometries},
   journal={J. Operator Theory},
   volume={59},
   date={2008},
   number={1},
   pages={3--28},
}

\bib{Gamal}{article}{
   author={Gamal', Maria F.},
   title={On power bounded operators that are quasiaffine transforms
     of singular unitaries},
   journal={Acta Sci. Math.},
   volume={77},
   date={2011},
   number={3-4},
   pages={589--606},
}

\bib{Gamal-pbded2015}{article}{
   author={Gamal', Maria F.},
   title={On quasisimilarity of polynomially bounded operators},
   journal={Acta Sci. Math. (Szeged)},
   volume={81},
   date={2015},
   number={1-2},
   pages={241--249},
}

\bib{Gamal-Pisier-like}{article}{
   author={Gamal', Maria F.},
   title={Examples of cyclic polynomially bounded operators that are not
   similar to contractions},
   journal={Acta Sci. Math. (Szeged)},
   volume={82},
   date={2016},
   number={3-4},
   pages={597--628},
}

\bib{Gheondea}{article}{
   author={Gheondea, Aurelian},
   title={When are the products of normal operators normal?},
   journal={Bull. Math. Soc. Sci. Math. Roumanie (N.S.)},
   volume={52(100)},
   date={2009},
   number={2},
   pages={129--150},
}

\bib{KarabKostMalamud}{article}{
   author={Karabash, Illya M.},
   author={Kostenko, Aleksey S.},
   author={Malamud, Mark M.},
   title={The similarity problem for $J$-nonnegative Sturm-Liouville
   operators},
   journal={J. Differential Equations},
   volume={246},
   date={2009},
   number={3},
   pages={964--997},
   issn={0022-0396},
}

\bib{Katznel-book}{book}{
   author={Katznelson, Yitzhak},
   title={An introduction to harmonic analysis},
   publisher={John Wiley \& Sons, Inc., New York-London-Sydney},
   date={1968},
   pages={xiv+264},
}

\bib{Kupin}{article}{
   author={Kupin, Stanislav},
   title={Linear resolvent growth test for similarity of a weak contraction
   to a normal operator},
   journal={Ark. Mat.},
   volume={39},
   date={2001},
   number={1},
   pages={95--119},
}

\bib{Kupin2003}{article}{
   author={Kupin, S.},
   title={Operators similar to contractions and their similarity to a normal
   operator},
   journal={Indiana Univ. Math. J.},
   volume={52},
   date={2003},
   number={3},
   pages={753--768},
}

\bib{KupinTreil}{article}{
   author={Kupin, S.},
   author={Treil, S.},
   title={Linear resolvent growth of a weak contraction does not imply its
   similarity to a normal operator},
   journal={Illinois J. Math.},
   volume={45},
   date={2001},
   number={1},
   pages={229--242},
}

\bib{Malamud}{article}{
   author={Malamud, M. M.},
   title={On the similarity of a triangular operator to a diagonal operator},
   language={Russian},
   journal={Zap. Nauchn. Sem. S.-Peterburg. Otdel. Mat. Inst. Steklov.
   (POMI)},
   volume={270},
   date={2000},
   pages={201--241, 367},
   translation={
      journal={J. Math. Sci. (N. Y.)},
      volume={115},
      date={2003},
      number={2},
      pages={2199--2222},
   },
}

\bib{Malamud2}{article}{
   author={Malamud, M. M.},
   title={A criterion for a closed operator to be similar to a selfadjoint
   operator},
   language={Russian},
   journal={Ukrain. Mat. Zh.},
   volume={37},
   date={1985},
   number={1},
   pages={49--56, 134},
  translation={
      journal={Ukr. Math. J.},
      volume={37},
      date={1985},
      number={1},
      pages={41--48},
   },
}

\bib{Markus}{article}{
   author={Markus, A. S.},
   title={Some tests of completeness of the system of root vectors of a
   linear operator and the summability of series in that system},
   language={Russian},
   journal={Dokl. Akad. Nauk SSSR},
   volume={155},
   date={1964},
   pages={753--756},
}

\bib{Mlak}{article}{
   author={Mlak, W.},
   title={Algebraic polynomially bounded operators},
   note={Collection of articles dedicated to the memory of Tadeusz Wa\.zewski,
   II},
   journal={Ann. Polon. Math.},
   volume={29},
   date={1974},
   pages={133--139},
}

\bib{Moslehian}{article}{
   author={Moslehian, M. S.},
   author={Nabavi Sales, S. M. S.},
   title={Some conditions implying normality of operators},
   journal={C. R. Math. Acad. Sci. Paris},
   volume={349},
   date={2011},
   number={5-6},
   pages={251--254},
}

\bib{Naboko}{article}{
   author={Naboko, S. N.},
   title={Conditions for similarity to unitary and selfadjoint operators},
   journal={Funktsional. Anal. i Prilozhen.},
   volume={18},
   date={1984},
   number={1},
   pages={16--27},
}

\bib{Naboko-1981}{article}{
   author={Naboko, S. N.},
   title={Functional model of perturbation theory and its applications to scattering theory},
   note={Boundary value problems of mathematical physics and related
   questions in the theory of functions, 10},
   journal={Proc. Steklov Inst. of Mathematics},
   volume={147},
   date={1981},
   pages={85--116},
}

\bib{Naboko-1981sing}{article}{
   author={Naboko, S. N.},
   title={On the singular spectrum of a nonselfadjoint operator},
   language={Russian, with English summary},
   journal={Zap. Nauchn. Sem. Leningrad. Otdel. Mat. Inst. Steklov. (LOMI)},
   volume={113},
   date={1981},
   pages={149--177, 266},
translation={
     journal={J. Math. Sci. (N. Y.)},
      volume={22},
      date={1983},
      number={6},
      pages={1793--1813},
      }
}

\bib{NabokoTretter}{article}{
   author={Naboko, S. N.},
   author={Tretter, C.},
   title={Lyapunov stability of a perturbed multiplication operator},
   conference={
      title={Contributions to operator theory in spaces with an indefinite
      metric},
      address={Vienna},
      date={1995},
   },
   book={
      series={Oper. Theory Adv. Appl.},
      volume={106},
      publisher={Birkh\"auser, Basel},
   },
   date={1998},
   pages={309--326},
}

\bib{Nieminen}{article}{
   author={Nieminen, Toivo},
   title={A condition for the self-adjointness of a linear operator},
   journal={Ann. Acad. Sci. Fenn. Ser. A I No},
   volume={316},
   date={1962},
   pages={5},
}

\bib{NikolskiBook}{book}{
   author={Nikolski, Nikolai K.},
   title={Operators, functions, and systems: an easy reading. Vol. 2},
   series={Mathematical Surveys and Monographs},
   volume={93},
   publisher={American Mathematical Society, Providence, RI},
   date={2002},
   pages={xiv+439},
   isbn={0-8218-2876-2},
}

\bib{NikolskiTreil}{article}{
   author={Nikolski, Nikolai},
   author={Treil, Sergei},
   title={Linear resolvent growth of rank one perturbation of a unitary
   operator does not imply its similarity to a normal operator},
   journal={J. Anal. Math.},
   volume={87},
   date={2002},
   pages={415--431},
}

\bib{Paulsen}{book}{
   author={Paulsen, Vern},
   title={Completely bounded maps and operator algebras},
   series={Cambridge Studies in Advanced Mathematics},
   volume={78},
   publisher={Cambridge University Press, Cambridge},
   date={2002},
   pages={xii+300},
}

\bib{Peller}{article}{
   author={Peller, V. V.},
   title={Spectrum, similarity, and invariant subspaces of Toeplitz
   operators},
   language={Russian},
   journal={Izv. Akad. Nauk SSSR Ser. Mat.},
   volume={50},
   date={1986},
   number={4},
   pages={776--787, 878},
}

\bib{Petrovic}{article}{
   author={Petrovi\'c, Srdjan},
   title={A dilation theory for polynomially bounded operators},
   journal={J. Funct. Anal.},
   volume={108},
   date={1992},
   number={2},
   pages={458--469},
}

\bib{Pommerenke}{book}{
   author={Pommerenke, Ch.},
   title={Boundary behaviour of conformal maps},
   series={Grundlehren der Mathematischen Wissenschaften [Fundamental
   Principles of Mathematical Sciences]},
   volume={299},
   publisher={Springer-Verlag, Berlin},
   date={1992},
   pages={x+300},
}

\bib{Popescu}{article}{
   author={Popescu, Gelu},
   title={On similarity of operators to isometries},
   journal={Michigan Math. J.},
   volume={39},
   date={1992},
   number={3},
   pages={385--393},
}

\bib{Sakhnovich}{article}{
   author={Sakhnovich, L. A.},
   title={Nonunitary operators with absolutely continuous spectrum},
   journal={Izvestia Akad. Nauk SSSR, Ser. Mat.},
   volume={33},
   date={1969},
   number={1},
   pages={52--64},
}

\bib{StampfliHyp}{article}{
   author={Stampfli, J. G.},
   title={Hyponormal operators and spectral density},
   journal={Trans. Amer. Math. Soc.},
   volume={117},
   date={1965},
   pages={469--476},
}

\bib{Stampfli}{article}{
   author={Stampfli, J. G.},
   title={A local spectral theory for operators},
   journal={J. Functional Analysis},
   volume={4},
   date={1969},
   pages={1--10},
}

\bib{Stampfli3}{article}{
    AUTHOR = {Stampfli, J. G.},
     TITLE = {A local spectral theory for operators. {III}. {R}esolvents,
              spectral sets and similarity},
   JOURNAL = {Trans. Amer. Math. Soc.},
    VOLUME = {168},
      YEAR = {1972},
     PAGES = {133--151},
}

\bib{SzNagy}{article}{
   author={de Sz. Nagy, B{\'e}la},
   title={On uniformly bounded linear transformations in Hilbert space},
   journal={Acta Univ. Szeged. Sect. Sci. Math.},
   volume={11},
   date={1947},
   pages={152--157},
}

\bib{SzNagyFoias65}{article}{
   author={Sz.-Nagy, B{\'e}la},
   author={Foia{\c{s}}, Ciprian},
   title={Sur les contractions de l'espace de Hilbert. X. Contractions
   similaires \`a des transformations unitaires},
   language={French},
   journal={Acta Sci. Math. (Szeged)},
   volume={26},
   date={1965},
   pages={79--91},
}

\bib{SzNagyFoiasBook}{book}{
   author={Sz.-Nagy, B{\'e}la},
   author={Foia{\lfhook{s}}, Ciprian},
   title={Harmonic analysis of operators on {H}ilbert space},
   series = {Translated from the French and revised},
   publisher={North-Holland Publishing Co., Amsterdam-London; American
     Elsevier Publishing Co., Inc., New York; Akad\'emiai Kiad\'o,
     Budapest},
   date={1970},
   pages={xiii+389},
}

\bib{Tikhonov-A}{article}{
   author={Tikhonov, A. S.},
   title={A functional model and duality of spectral components for
   operators with a continuous spectrum on a curve},
   language={Russian},
   journal={Algebra i Analiz},
   volume={14},
   date={2002},
   number={4},
   pages={158--195},
   translation={
      journal={St. Petersburg Math. J.},
      volume={14},
      date={2003},
      number={4},
      pages={655--682},
   },
}

\bib{UndrakhEtl}{article}{
   author={Undrakh, Batzorig},
   author={Nakazato, Hiroshi},
   author={Vandanjav, Adiyasuren},
   author={Chien, Mao-Ting},
   title={The numerical radius of a weighted shift operator},
   journal={Electron. J. Linear Algebra},
   volume={30},
   date={2015},
   pages={944--963},
}

\bib{KupinVasyunin}{article}{
   author={Vasyunin, V.},
   author={Kupin, S.},
   title={Criteria for the similarity of a dissipative integral operator to
   a normal operator},
   journal={Algebra i Analiz},
   volume={13},
   date={2001},
   number={3},
   pages={65--104},
   translation={
      journal={St. Petersburg Math. J.},
      volume={13},
      date={2002},
      number={3},
      pages={389--416},
   },
}

\end{biblist}
\end{bibdiv}

\end{document}